\documentclass[12pt, leqno]{amsart}
\usepackage{
amsmath,
amssymb,
amsthm,
mathrsfs,
amsfonts,
ytableau,
enumitem,
comment,
upgreek,
soul,
yhmath,
mathtools,
kotex,
stackengine,
caption,
tabularx,
stackrel,
amscd}
\usepackage{tikz,tikz-cd}
\usetikzlibrary{decorations.pathreplacing,decorations.pathmorphing}
\usetikzlibrary{arrows,arrows.meta}
\usetikzlibrary{shapes,shapes.geometric,shapes.symbols}
\usetikzlibrary{matrix,calc}
\usepackage[poly,all]{xy}
\usepackage{tabu}
\usepackage[colorinlistoftodos, textwidth = 2.3cm]{todonotes}
\usepackage{graphicx}
\usepackage{shuffle}
\usepackage{lipsum}
\usepackage{hyperref}
\hypersetup{
    colorlinks=true,
    linkcolor = blue,
    citecolor=magenta,
    urlcolor=cyan,
}
\usepackage[capitalise, nameinlink]{cleveref}
\crefname{equation}{}{}
\crefname{figure}{{\sc Figure}}{{\sc Figure}}
\crefname{section}{Section}{sections}

\ytableausetup{mathmode, boxsize=1em,aligntableaux=center}

\setcounter{tocdepth}{2}
\setlength{\marginparwidth}{2.2cm}
\setlength{\textwidth}{16cm} \setlength{\textheight}{20cm}
\setlength{\oddsidemargin}{0.3cm} \setlength{\evensidemargin}{0.3cm}

\newtheorem{theorem}{Theorem}[section]
\newtheorem{proposition}[theorem]{Proposition}
\newtheorem{lemma}[theorem]{Lemma}
\newtheorem{corollary}[theorem]{Corollary}

\newtheorem*{claim*}{Claim}

\theoremstyle{definition}
\newtheorem{algorithm}[theorem]{Algorithm}
\newtheorem{example}[theorem]{Example}
\newtheorem{definition}[theorem]{Definition}
\newtheorem{remark}[theorem]{Remark}
\newtheorem*{roadmap}{The procedure of our method}

\numberwithin{equation}{section} \numberwithin{figure}{section}
\numberwithin{table}{section}

\def \hp {0.35}
\def \vp {0.45}
\def \ccc {3mm}
\def \lw {0.5mm}
\def \hhh{0.9em}
\def \vvv{0.9em}

\def \N {\mathbb N}
\def \Z {\mathbb Z}                           
\def \C {\mathbb C}
\def \Q {\mathbb Q}

\newcommand{\nc}{\newcommand}
\nc{\ra}{\rightarrow}
\nc{\la}{\leftarrow}
\nc{\SG}{\mathfrak{S}}
\nc{\PCT}{\mathrm{PCT}}
\nc{\SPCT}{\mathrm{SPCT}}
\nc{\SPYCT}{\mathrm{SPYCT}}
\nc{\RT}{\mathrm{RT}}
\nc{\SRT}{\mathrm{SRT}}
\nc{\RCT}{\mathrm{RCT}}
\nc{\SRCT}{\mathrm{SRCT}}
\nc{\SYCT}{\mathrm{SYCT}}
\nc{\SRET}{\mathrm{SRET}}
\nc{\SPYCTsa}[2]{\mathrm{SPYCT}^{#1}(#2)}
\nc{\DIF}[1]{\mathfrak{S}^*_{#1}}
\nc{\mDIF}[1]{\mathcal{V}_{#1}}
\nc{\RDIF}[1]{\mathcal{R}\mathfrak{S}^*_{#1}}
\nc{\mRDIF}[1]{\mathcal{R}\mathcal{V}_{#1}}
\nc{\ESF}[1]{\mathcal{E}_{#1}}
\nc{\mESF}[1]{X_{#1}}
\nc{\mRESF}[1]{\mathcal{R}X_{#1}}
\nc{\RESF}[1]{\mathcal{R}\mathcal{E}_{#1}}
\nc{\YQS}[1]{\widehat{\mathcal{S}}_{#1}}
\nc{\mYQS}[1]{\widehat{\mathbf{S}}_{#1}}
\nc{\QS}[1]{\mathcal{S}_{#1}}
\nc{\mQS}[1]{\mathbf{S}_{#1}}
\nc{\posetP}[1]{\mathcal{P}_{#1}}
\nc{\posetSEE}[1]{P_{\mQS{#1}}}
\nc{\PQS}[2]{\mathcal{S}^{#1}_{#2}}
\nc{\mPQS}[2]{\mathbf{S}^{#1}_{#2}}
\nc{\YRQS}[1]{\mathcal{R}\mathcal{S}_{#1}}
\nc{\mYRQS}[1]{\mathbf{R}\mathbf{S}_{#1}}
\nc{\RQS}[1]{\mathcal{R}_{#1}}
\nc{\mRQS}[1]{\mathbf{R}_{#1}}
\nc{\stan}{\mathrm{stan}}
\nc{\Span}{\mathrm{span}}
\nc{\comp}{\mathrm{comp}}
\nc{\rmst}{\mathrm{st}}
\nc{\Des}[2]{\mathrm{Des}_{#1}(#2)}
\nc{\des}{\mathrm{des}}
\nc{\NDes}{\mathrm{NDes}}
\nc{\ADes}[1]{\mathrm{ADes}(#1)}
\nc{\set}{\mathrm{set}}
\nc{\wt}{\mathrm{wt}}
\nc{\ch}{\mathrm{ch}}
\nc{\id}{\mathrm{id}}
\nc{\Sym}{\mathrm{Sym}}
\nc{\Qsym}{\mathrm{QSym}}
\nc{\Nsym}{\mathrm{NSym}}
\nc{\sh}{\mathrm{sh}}
\nc{\bfRa}[1]{\mathbf{R}_{#1}}
\nc{\bfB}{\mathbf{B}}
\nc{\bfi}[1]{\mathbf{i}_{#1}}
\nc{\bfj}{\mathbf{j}}
\nc{\bfs}{\mathbf{s}}
\nc{\bfx}{\mathbf{x}}
\nc{\bfy}{\mathbf{y}}
\nc{\bfc}{\mathbf{c}}
\nc{\bfm}{\mathbf{m}}
\nc{\bfM}{\mathbf{M}}
\nc{\hbfS}{\widehat{\mathbf{S}}}
\nc{\bfF}{\mathbf{F}}
\nc{\bfG}{\mathbf{G}}
\nc{\calS}{\mathcal{S}}
\nc{\calK}{\mathcal{K}}
\nc{\hcalS}{\widehat{\mathcal{S}}}
\nc{\alphamax}{\alpha_{\rm max}}
\nc{\brho}{\overline{\rho}}
\nc{\bphi}{\overline{\phi}}
\nc{\calV}{\mathcal{V}}
\nc{\calRV}{\mathcal{R}\mathcal{V}}
\nc{\RX}[1]{\mathcal{R}X_{#1}}
\nc{\calR}{\mathcal{R}}
\nc{\calG}{\mathcal{G}}
\nc{\calD}{\mathcal{D}}
\nc{\tal}{\lambda(\alpha)}
\nc{\tbe}{\widetilde{\beta}}
\nc{\opi}{\overline{\pi}}
\nc{\calP}{\mathcal{P}}
\nc{\rmtop}{\mathrm{top}}
\nc{\rad}{\mathrm{rad}}
\nc{\bfP}{\mathbf{P}}
\nc{\SET}{\mathrm{SET}}
\nc{\SIT}{\mathrm{SIT}}
\nc{\rev}{\mathrm{r}}
\nc{\rc}{\mathrm{t}}
\nc{\Th}{\theta}
\nc{\htau}{\widehat{\tau}}
\nc{\mPhi}{\Phi}
\nc{\mphi}{\phi}
\nc{\mPsi}{\Psi}
\nc{\hmPsi}{\widehat{\Psi}}
\nc{\mpsi}{\psi}
\nc{\mGam}{\Gamma}
\nc{\tcd}{\mathtt{cd}}
\nc{\diagramAE}{\mathtt{cd}}
\nc{\tcds}{\mathtt{shcd}_1}
\nc{\trd}{\mathtt{rd}}
\nc{\trcd}{\mathtt{rcd}}
\nc{\rmr}{\mathrm{r}}
\nc{\rmc}{\mathrm{c}}
\nc{\rmt}{\mathrm{t}}
\nc{\bubact}{\,\scalebox{0.6}{$\bullet$}\,}
\nc{\hbubact}{\,\scalebox{0.6}{$\widehat{\bullet}$}\,}
\nc{\col}{\rm col}
\nc{\row}{\rm row}
\nc{\calE}{\mathcal{E}}
\nc{\calA}{\mathcal{A}}
\nc{\calB}{\mathcal{B}}
\nc{\scrB}{\mathscr{B}}
\nc{\scroB}{\overline{\mathscr{B}}}
\nc{\scrP}[2]{\mathscr{P}_{#1}(#2)}
\nc{\scroP}[2]{{\overline{\mathscr{P}}}_{#1}(#2)}
\nc{\calC}{\mathcal{C}}
\nc{\calF}{\mathcal{F}}
\nc{\ocalF}{\overline{\calF}}
\nc{\calL}{\mathcal{L}}
\nc{\calT}{\mathscr{T}}
\nc{\sfB}{\mathsf{B}}
\nc{\sfb}{\mathsf{b}}
\nc{\sft}{\mathsf{t}}
\nc{\sfF}{\mathsf{F}}
\nc{\sfSF}{\mathsf{SF}}
\nc{\sfST}[1]{\mathsf{ST}(#1)}
\nc{\sfD}{\mathsf{D}}
\nc{\sfTab}[1]{\mathsf{Tab}(#1)}
\nc{\sfLL}{\mathsf{lp}}
\nc{\sfip}{\mathsf{ip}}
\nc{\sfT}{\mathsf{T}}
\nc{\sfL}{\mathsf{L}}
\nc{\sfM}{\mathsf{M}}
\nc{\sfpr}{\mathsf{pr}}
\nc{\calEsa}{\mathcal{E}^\sigma(\alpha)}
\nc{\setIndSummandY}{\mathsf{ind}(\mathbf{Y}_\alpha)}
\nc{\revSum}[2]{o_{#1}(#2)}
\nc{\tauC}{\tau_{\scalebox{0.5}{$C$}}}
\nc{\sytabC}{\sytab_{\scalebox{0.5}{$C$}}}
\nc{\bbfP}{\overline{\bfP}}
\nc{\pr}{\mathbf{pr}}
\nc{\Ups}{\Upsilon}
\nc{\pact}{\diamond}
\nc{\tauE}{\tau_{\scalebox{0.5}{$E$}}}
\nc{\tauEL}[1]{\ensuremath{\tau_{\scalebox{0.5}{$#1$}}}}
\nc{\taupE}{\ensuremath{\tau'_{\scalebox{0.5}{$E$}}}}
\nc{\fkTSk}[1]{\mathfrak{T}^{\la}_{#1}}
\nc{\tauF}{\tau_{\scalebox{0.5}{$F$}}}
\nc{\tauG}{\tau_{\scalebox{0.5}{$G$}}}
\nc{\rtE}{T_{\scalebox{0.5}{$E$}}}
\nc{\rtF}{T_{\scalebox{0.5}{$F$}}}
\nc{\rtG}{T_{\scalebox{0.5}{$G$}}}
\nc{\oPaE}{\overline{\Phi}_{\alpha_E}}
\nc{\oPaF}{\overline{\Phi}_{\alpha_F}}
\nc{\oPaG}{\overline{\Phi}_{\alpha_G}}
\nc{\tab}{\tau}
\nc{\sytab}{\widehat{\tau}}
\nc{\hatE}{\widehat{E}}
\nc{\hcalE}{\widehat{\calE}}
\nc{\hatC}{\widehat{C}}
\nc{\bal}{{\boldsymbol{\upalpha}}}
\nc{\bbe}{{\boldsymbol{\upbeta}}}
\nc{\bgam}{{\boldsymbol{\upgamma}}}
\nc{\bdel}{{\boldsymbol{\updelta}}}
\nc{\weakcon}{\odot}
\nc{\calM}{\mathcal{M}}
\nc{\calN}{\mathcal{N}}

\nc{\ldalpha}{\lambda(\alpha)}
\nc{\SRIT}{\mathrm{SRIT}}
\nc{\re}{\mathrm{rev}}
\nc{\otau}{\overline{\tau}}
\nc{\rtop}{{\rm top}}
\nc{\sfc}{\mathsf{c}}
\nc{\sfr}{\mathsf{r}}
\nc{\JC}{\mathtt{JC}}
\nc{\tH}{\mathtt{H}}
\nc{\tS}{\mathtt{S}}
\nc{\tV}{\mathtt{V}}
\nc{\TcalTt}{{T^{\calT}}}
\nc{\fkT}{\mathfrak{T}}
\nc{\TfkTt}{{T^{\fkT}}}
\nc{\calTuT}{\calT^T}
\nc{\projco}{\Phi}
\nc{\indexK}[2]{\calK_{#1}(#2)}
\nc{\soc}{\mathrm{soc}}
\nc{\longelto}[1]{w_0(#1)}
\nc{\longeltt}[2]{\boldsymbol{w_0}(#1;#2)}
\nc{\tJ}[2]{{\tt J}_{#1;#2}}
\nc{\TLba}[2]{{T^\la_{#1;#2}}}
\nc{\autotheta}{\uptheta}
\nc{\calW}{\mathcal{W}}
\nc{\autophi}{\upphi}
\nc{\autochi}{\upchi}
\nc{\autoomega}{\upomega}
\nc{\hautophi}{{\widehat{\autophi}}}
\nc{\hautotheta}{{\widehat{\autotheta}}}
\nc{\hautoomega}{{\widehat{\autoomega}}}
\nc{\bfpi}{\boldsymbol{\uppi}}
\nc{\osfB}{\overline{\sfB}}
\nc{\ourMP}[1]{\mathbf{M}_{#1}}
\nc{\ourPoset}[1]{P_{#1}}
\nc{\DHTMP}[1]{M_{#1}}
\nc{\WBIM}[2]{\mathsf{B}(#1,#2)}
\nc{\setInt}[1]{\mathsf{Int}(#1)}
\nc{\poset}[1]{\mathsf{Poset}(#1)}
\nc{\modR}{\text{\bf mod-}H_n(0)}
\nc{\modA}{\text{\bf mod-}A}
\nc{\modRb}{\text{\bf mod-}H_\bullet(0)}
\nc{\Rmod}{H_n(0)\text{\bf -mod}}
\nc{\Rbmod}{H_\bullet(0)\text{\bf -mod}}
\nc{\rmread}{\mathsf{read}}
\nc{\rbread}{\mathrm{read}}
\nc{\rmIm}{\mathrm{Im}}
\nc{\Hom}{\mathrm{Hom}}
\nc{\SPCTsa}{\SPCT^\upsig(\alpha)}
\nc{\fkp}{\mathfrak{p}}
\nc{\bfR}{\mathbf{R}}
\nc{\SYRT}{\mathrm{SYRT}}
\nc{\ova}{\overline{\alpha}}
\nc{\cocover}{\mathsf{cocover}}
\nc{\tPhi}{\widetilde{\mPhi}}
\nc{\sfem}{\mathsf{em}}
\nc{\upsig}{{\boldsymbol{\upsigma}}}
\nc{\rmw}{\mathrm{w}}
\nc{\tread}[1]{\underline{\mathsf{read}}(#1)}
\nc{\Yread}[1]{\mathtt{read}(#1)}
\nc{\sfLS}[2]{\mathsf{LS}(#1;#2)}
\nc{\sfUS}[2]{\mathsf{US}(#1;#2)}
\nc{\setAlphaE}{(\alpha^-,E^-)}
\nc{\balalp}{\underline{{\boldsymbol{\upalpha}}}}
\nc{\balalpp}[1]{\underline{{\boldsymbol{\upalpha}}}_{(#1)}}
\nc{\SGR}[1]{\ensuremath{\Sigma_R(#1)}}
\nc{\SGL}[1]{\ensuremath{\Sigma_L(#1)}}
\nc{\inc}[1]{\mathsf{inc}(#1)}
\nc{\BST}[1]{\mathrm{BST}(#1)}
\nc{\BSTx}[2]{\mathrm{BST}_{#1}(#2)}
\nc{\Kpw}{K_{(P,\omega)}(x)}
\nc{\Kp}{K_{P}(x)}
\nc{\simFR}[1]{\bfF^R_{#1}}
\nc{\simFL}[1]{\bfF^L_{#1}}
\nc{\bfwD}[1]{\mathbf{w}_{#1}}
\nc{\bfwPD}[1]{\mathbf{v}_{#1}}
\nc{\setA}[2]{A_{#1;#2}}
\nc{\setAA}[2]{\widetilde{A}_{#1;#2}}
\nc{\upineq}{\rotatebox{90}{$<$}}
\nc{\downineq}{\rotatebox{270}{$<$}}
\nc{\diagineq}{\rotatebox{135}{$<$}}
\nc{\bfw}{\mathbf{w}}
\nc{\bfv}{\mathbf{v}}
\nc{\frakS}{\mathfrak{S}}
\nc{\frakmk}{\mathfrak{m}_k}
\nc{\frakmone}{\mathfrak{m}_1}
\nc{\valp}[1]{{#1}_{\scalebox{0.4}{[V]}}}
\nc{\xalp}[1]{{#1}_{\scalebox{0.4}{[X]}}}
\nc{\salp}[1]{{#1}_{\scalebox{0.4}{[S]}}}
\nc{\sourceE}[1]{E_{#1}}
\nc{\sourceEE}[1]{E^{\rightarrow}_{#1}}
\nc{\sinkE}[1]{E^{\la}_{#1}}
\nc{\setS}[1]{\mathcal{E}_{#1}}
\nc{\setSE}[1]{\mathcal{E}^{\rightarrow}_{#1}}
\nc{\setSSE}[1]{\mathcal{E}^{\la}_{#1}}
\nc{\risefree}[1]{{\rm RiseFree}(#1)}
\nc{\fallfree}[1]{{\rm FallFree}(#1)}
\nc{\typeA}{{$\wedge$-regular}}
\nc{\typeAA}{\overline{41}\,\overline{23}}
\nc{\typeAB}{\overline{14}\,\overline{32}}
\nc{\typeB}{{$\vee$-regular}}
\nc{\sourcetauE}[1]{\ensuremath{\tau}^\rightarrow_{#1}}
\nc{\sourcehtauE}[1]{\hat{\tau}^\rightarrow_{#1}}
\nc{\sourceSIT}[1]{\calT_{#1}}
\nc{\sinkSIT}[1]{\calT'_{#1}}
\nc{\sourceSET}[1]{\sfT_{#1}}
\nc{\sinkSET}[1]{\sfT'_{#1}}
\nc{\sinktauE}[1]{\ensuremath{\tau^\la_{#1}}}
\nc{\partialSinkE}{\tau_{E[-1],1}}
\nc{\bfRaG}{\mathbf{R}_{\alpha,G}}
\nc{\sourceTD}[1]{\ensuremath{T^\rightarrow_{#1}}}
\nc{\sinkTD}[1]{T^\uparrow_{#1}}
\nc{\sfFD}{{\sf F}_D}
\nc{\colindexR}[1]{\mathbf{c}(#1)}
\nc{\colindexL}[1]{\mathbf{c}_{\rm L}(#1)}
\nc{\rowindexH}[1]{\mathbf{r}(#1)}
\nc{\tHindex}[1]{\mathsf{h}(#1)}
\nc{\removablenodes}{\mathsf{RN}(\alpha)}
\nc{\sign}[1]{{\rm sign}(#1)}
\nc{\setLE}[1]{{L}(#1)}
\nc{\setNLP}[1]{{NL}(#1)}
\nc{\PI}{\mathtt{P}_n(I)}

\nc{\Puv}[2]{P_{(#1,#2)}}
\nc{\inv}[1]{\mathrm{Inv}(#1)}
\nc{\invL}[1]{\mathrm{Inv}_L(#1)}
\nc{\coinv}[1]{\mathrm{Coinv}(#1)}
\nc{\coinvL}[1]{\mathrm{Coinv}_L(#1)}
\nc{\minp}[1]{{#1}_{\rm min}}
\nc{\htt}[2]{\mathrm{ht}_{#1}(#2)}
\nc{\height}{\mathrm{ht}}
\nc{\pomega}{(P,\omega)}
\nc{\KPw}{K_{(P,\omega)}}
\nc{\KP}{K_{P}}
\nc{\balpha}{Y_\alpha}
\nc{\mbalpha}{\mathbf{Y}_\alpha}
\nc{\posetSource}[1]{{#1}^\rightarrow}
\nc{\posetSink}[1]{{#1}^\la}
\nc{\readingSourceP}[1]{{\tt read}_{\rm BTLR}(#1)}
\nc{\readingSinkP}[1]{{\tt read}_{\rm LRBT}(#1)}
\nc{\readingUL}[1]{{\tt read}^{\rm rev}_{\mathsf{ST}}(#1)}
\nc{\readingLR}[1]{{\tt read}_{\mathsf{ST}}(#1)}
\nc{\readingLL}[1]{{\tt read}_{\rm BTLR}(#1)}
\nc{\readingLLR}[1]{{\tt read}_{\rm LRBT}(#1)}
\nc{\positionX}[1]{x_{#1}}
\nc{\positionY}[1]{y_{#1}}
\nc{\labelSE}{\omega_{E}}
\nc{\posethSE}{P_{F}}
\nc{\posetRSG}{P_{G_*}}
\nc{\labelRSG}{\omega_{G_*}}
\nc{\posetDI}[1]{P_{\calV_{#1}}}
\nc{\posetRDI}[1]{P_{\calR\calV_{#1}}}
\nc{\posetX}[1]{P_{X_{#1}}}
\nc{\posetRX}[1]{P_{\mathcal{R}X_{#1}}}
\nc{\classQS}[1]{\mathcal{E}(#1)}
\nc{\classRQS}[1]{\mathbf{E}(#1)}
\nc{\classYQS}[1]{\widehat{\mathcal{E}}(#1)}
\nc{\classRS}[1]{\mathcal{E}_*(#1)}
\nc{\posetFromAlgo}[1]{P_{#1}}
\nc{\numberRN}[1]{{\sf U}(#1)}
\nc{\setR}[1]{R_{#1}}
\nc{\setC}[1]{{\sf C}_{#1}(E)}
\nc{\setV}[1]{{\sf V}_{#1}(E)}
\nc{\setPosetCons}{\mathfrak{P}(n)}
\nc{\wrestplus}[2]{{#1_{\geq #2}[+1]}}
\nc{\longelt}[1]{w_0^{(#1)}}
\nc{\subsetX}{D}
\nc{\posX}[2]{{\bf r}_{#1}(#2)}
\nc{\posY}[2]{{\bf c}_{#1}(#2)}
\nc{\mapf}{\boldsymbol{\Xi}}
\nc{\sinkAlgo}{\mathtt{s}(\tau)}
\nc{\sinkAlgotau}[1]{\mathtt{s}(#1)}
 
\newcommand*\circled[1]{\tikz[baseline=(char.base)]{
            \node[shape=circle,draw,inner sep=1.5pt] (char) {$#1$};}}

\title[Poset modules of the $0$-Hecke algebras]{Poset modules of the $0$-Hecke algebras and related quasisymmetric power sum expansions}

\author[S.-I. Choi]{Seung-Il Choi}
\address{Center for Quantum structures in Modules and Spaces, Seoul National University, Seoul 08826, Republic of Korea}
\email{ignatioschoi@snu.ac.kr}

\author[Y.-H. Kim]{Young-Hun Kim}
\address{Center for Quantum structures in Modules and Spaces, Seoul National University, Seoul 08826, Republic of Korea}
\email{ykim.math@gmail.com}

\author[Y.-T. Oh]{Young-Tak Oh}
\address{Department of Mathematics, Sogang University, Seoul 04107, Republic of Korea}
\email{ytoh@sogang.ac.kr}

\keywords{Partially ordered set (poset), $0$-Hecke algebra, Quasisymmetric function, Quasisymmetric power sum, Weak Bruhat order}

\date{\today}
\subjclass[2020]{20C08, 05E05, 05E10}

\begin{document}
\begin{abstract}
Duchamp--Hivert--Thibon introduced the construction of a right $H_n(0)$-module, denoted as $M_P$, for any partial order $P$ on the set $[n]$. This module is defined by specifying a suitable action of $H_n(0)$ on the set of linear extensions of $P$. In this paper, we refer to this module as the poset module associated with $P$.
Firstly, we show that $\bigoplus_{n \ge 0} G_0(\mathscr{P}(n))$ has a Hopf algebra structure that is isomorphic to the Hopf algebra of quasisymmetric functions, where $\mathscr{P}(n)$ is the full subcategory of $\textbf{mod-}H_n(0)$ whose objects are direct sums of finitely many isomorphic copies of poset modules and $G_0(\mathscr{P}(n))$ is the Grothendieck group of $\mathscr{P}(n)$. 
We also demonstrate how (anti-)automorphism twists interact with these modules, the induction product and restrictions. 
Secondly, we investigate the (type 1) quasisymmetric power sum expansion of some quasi-analogues $Y_\alpha$ of Schur functions, where $\alpha$ is a composition. 
We show that they can be expressed as the sum of the $P$-partition generating functions of specific posets, which allows us to utilize the result established by Liu--Weselcouch.
Additionally, we provide a new algorithm for obtaining these posets.
Using these findings, for the dual immaculate function and the extended Schur function, we express the coefficients appearing in the quasisymmetric power sum expansions in terms of border strip tableaux.
\end{abstract}

\maketitle

\tableofcontents

\section{Introduction}
The $0$-Hecke algebra $H_n(0)$ is obtained from the generic Hecke algebra $H_n(q)$ by specializing $q$ to $0$, but its representation theory is quite different from that of $H_n(q)$. For example, it is not representation-finite for $n \geq 4$ 
(see~\cite{11BG, 02DHT}).
So far, its representations have been constructed using a variety of combinatorial objects, including labeled posets, weak Bruhat intervals, ordered set partitions, and tableaux (see~\cite{02DHT,03Mc,22JKLO,18HR,15TW,15BBSSZ,19TW,21CKNO,22Searles,22BS,22NSvWVW}).
In this paper, we intensively study the $H_n(0)$-modules that arise from labeled posets. 

Let $P$ be a finite poset and $\omega$ a labeling of $P$.
In \cite{72Stan}, Stanley defined the $(P,\omega)$-partition generating function as 
\[
\KPw:=\sum_{\sigma \in \calA(P,\omega)}x_{\sigma(1)}x_{\sigma(2)}\cdots x_{\sigma(n)},
\]
where $\calA(P,\omega)$ is the set of all $\pomega$-partitions.
It was dealt with in more detail by Gessel~\cite{84Ges}. 
One notable feature of this function is that it appears as $F$-positive, 
that is, as a non-negative sum of fundamental quasisymmetric functions (see~\cref{Duchamp--Hivert--Thibon's poset modules}).
We mainly deal with the posets with underlying set $[n]:=\{1,2,\ldots,n\}$.
Such a poset $P$ can be naturally viewed as a labeled poset $(P, \omega)$, where the labeling $\omega: P \rightarrow [n]$ is given by $\omega(i) = i$ for $1 \leq i \leq n$.
Therefore, we simply write $K_P$ for $K_{\pomega}$.

On the other hand, it is well known that $F$-positive quasisymmetric functions play an important role in 
the representation theory of the 0-Hecke algebras of type $A$. 
Let $\Qsym$ be the algebra of quasisymmetric functions over $\Z$ and $G_0(\modR)$ the Grothendieck group of the category of finitely generated right $H_n(0)$-modules.
It was shown in~\cite{96DKLT, 97KT} that the {\em quasisymmetric characteristic}
\begin{align*}
\ch : \bigoplus_{n \ge 0}G_0(\modR) \ra \Qsym, \quad [\simFR{\alpha}] \mapsto F_{\alpha}
\end{align*}
is a ring isomorphism when the domain is equipped with the multiplication induced from the induction product. 
Here, $\alpha$ is a composition of $n$, $\simFR{\alpha}$ is the irreducible right $H_n(0)$-module attached to $\alpha$
and $[\simFR{\alpha}]$ the equivalence class of $\simFR{\alpha}$ inside $G_0(\modR)$ (see~\cref{subsec: 0-Hecke alg and QSym}).
Later, it was shown in \cite{09BL} that the quasisymmetric characteristic $\ch$ is an isomorphism of graded Hopf algebras.

Suppose we have a family $\{f_j \mid j \in J\}$ of $F$-positive quasisymmetric functions, where $J$ is an index set. There have been many studies to construct a family $\{M_j \mid j \in J\}$ of $H_n(0)$-modules with the property $\ch([M_j])=f_j$ by endowing a suitable $H_n(0)$-action on an existing combinatorial model for $f_j$, rather than simply adding irreducible modules. Duchamp--Hivert--Thibon~\cite{02DHT} conducted such a study for the $P$-partition generating functions of posets on $[n]$.
Let $\poset{n}$ be the set of all posets on $[n]$. 
For each $P\in \poset{n}$, define $\SGR{P}$ to be the set of permutations $\sigma \in \mathfrak S_n$ such that $\sigma^{-1}(i)< \sigma^{-1}(j)$ whenever $i \prec_P j$.
They construct a right $H_n(0)$-module $\DHTMP{P}$ by defining the following $H_n(0)$-action on the $\C$-span of $\SGR{P}$:
for $1 \leq i \leq n-1$ and $\gamma \in \SGR{P}$,
\begin{align*}
\gamma \cdot \opi_i:= 
\begin{cases}
-\gamma & \text{if } i \in \Des{R}{\gamma},\\
0 & \text{if $i \notin \Des{R}{\gamma}$ and $\gamma s_i \notin \SGR{P}$},\\
\gamma s_i & \text{if } i \notin \Des{R}{\gamma} \text{ and } \gamma s_i \in \SGR{P}.
\end{cases}
\end{align*}
Here, $\{\opi_i \mid 1 \leq i \leq n-1\}$ is a set of generators of $H_n(0)$ (see \cref{subsec: 0-Hecke alg and QSym}).
And, $\Des{R}{\gamma} = \{1 \leq i \leq n-1 \mid \ell(\gamma s_i) < \ell(\gamma)\}$, $s_i=(i \ \, i+1)$ is a simple transposition, and $\ell(\gamma)$ is the length of $\gamma$. 
In the present paper, we call it the \emph{poset module associated with $P$}.
This module has the property $\ch([\DHTMP{P}])=K_P$, which plays an important role in our arguments.

In \cref{Sec: poset module category}, we study poset module categories from the viewpoint of the Grothendieck group. 
For each nonnegative integer $n$,
let $\modR$ be the category of right $H_n(0)$-modules and $\scrP{}{n}$ the full subcategory of $\modR$ whose objects are direct-sums of finitely many isomorphic copies of poset modules. 
For convenience, although not entirely accurate, we will refer to $\scrP{}{n}$ as the \emph{poset module category of rank $n$}.
In \cref{Sec: The Hopf algebra structure}, we show that $\bigoplus_{n \ge 0} G_0(\scrP{}{n})$, with the product and coproduct inherited from the induced product and restriction, has a Hopf algebraic structure that is isomorphic to $\Qsym$ (\cref{Hopf algebra structure of Grothendieck group}).
The crucial step in doing this is to ensure that the restriction of a poset module appears as a direct sum of tensor products of poset modules, since it was already been proven in~\cite{02DHT} that the induction product of poset modules is again a poset module (\cref{Lem: restriction}).
In~\cref{Sec: automorphism twists}, we demonstrate how (anti-)automorphism twists interact with the poset modules.
For instance, for the (anti-)automorphism twists $\upphi[\DHTMP{P}]$, $\uptheta[\DHTMP{P}]$, and $\upchi[\DHTMP{P}]$, we have the right $H_n(0)$-module isomorphisms
\begin{align*}
\upphi[\DHTMP{P}] \cong   \DHTMP{\overline{P}^*},
\quad
\uptheta[\DHTMP{P}] \cong  \ourMP{P},
\quad \text{and} \quad
\upchi[\DHTMP{P}] \cong \ourMP{\overline{P}},
\end{align*}
where $\upphi,\uptheta$, and $\upchi$ are (anti-)involutions of $H_n(0)$ due to Fayers~\cite{05Fayers}, ${}^-$ and ${}^*$ are certain poset involutions on $\poset{n}$, and $\ourMP{P}$ is the right $H_n(0)$-module whose underlying space is  $\C\SGR{P}$ and whose $H_n(0)$-action is given by
\begin{align*}
\gamma \cdot \pi_i:= 
\begin{cases}
\gamma & \text{if $i \in \Des{R}{\gamma}$},\\
0 & \text{if $i \notin \Des{R}{\gamma}$ and $\gamma s_i \notin \SGR{P}$},\\
\gamma s_i & \text{if $i \notin \Des{R}{\gamma}$  and $\gamma s_i \in \SGR{P}$}
\end{cases}
\end{align*}
for $i \in [n-1]$ and $\gamma \in \SGR{P}$
(\cref{thm: auto-twists}). 
The interactions of (anti-)automorphism twists with the induction product and restrictions for poset modules are also dealt with (\cref{Coro: Automorphis of P}).

In \cref{Sec: Quasisymmetric power sum expansions I}, we investigate the expansion of some important bases for $\Qsym$ in terms of the (type 1) quasisymmetric power sum basis introduced by Ballantine--Daugherty--Hicks--Mason--Niese~\cite{20BDHMN}.
The bases we are working with, denoted as $\{\balpha \mid \text{$\alpha$ is a composition}\}$, were introduced as quasisymmetric analogues of the Schur basis
(see~\cref{Table: our consideration}).
Recently, Liu--Weselcouch~\cite{21LW} provided a combinatorial formula for the quasisymmetric power sum expansion of $K_{\pomega}$ for a labeled poset $\pomega$.
Such expansions have previously been considered in the naturally labeled case by Alexandersson--Sulzgruber \cite{21AS}.
For every composition $\alpha$ of $n$, we show that each $\balpha$ appears as the sum of the $P$-partition generating functions of some posets in $\poset{n}$, say $\balpha=\sum_{P}K_{P}$, which allows us to use Liu--Weselcouch's result.
In doing this, the results of Jung--Kim--Lee--Oh (\cite[Section 4]{22JKLO}) and Bj\"{o}rner--Wachs (\cite[Theorem 6.8]{91BW}) play a key role, where the former concerns left weak Bruhat interval modules and the latter characterizes the posets $P \in \poset{n}$ such that $\SGR{P}$ is a right weak Bruhat interval.
The overall process for deriving our result is outlined step by step in \cref{The bases for Qsym in consideration and our method}, and detailed descriptions of each step are provided throughout the rest of \cref{Sec: Quasisymmetric power sum expansions I}.

In \cref{Sec: Quasisymmetric power sum expansions II}, we study the posets $P$ that arise from the equality $\balpha=\sum_{P}K_{P}$, which was obtained in \cref{Sec: Quasisymmetric power sum expansions I}.
Let $\mbalpha$ be an arbitrary left $H_n(0)$-module from the list given in \cref{Table: our consideration}, and let $\setIndSummandY$ be the set of indecomposable direct summands of $\mathbf{\alpha}$.
For each $V\in \setIndSummandY$, we show in \cref{Subsec: pomega-partitions} that there is a right Bruhat interval $I_V$ such that $\DHTMP{P_{I_V}} \cong \calF_n(V)$ (as right $H_n(0)$-modules).
Here, $P_{I_V}$ is the regular poset corresponding to $I_V$ under Bj\"{o}rner--Wachs's algorithm
and $\calF_n$ is the quasisymmetric characteristic preserving contravariant functor from $\Rmod$ to $\modR$ given in \cref{Step 3 and Step 4}.
Since $\balpha=\ch([\mbalpha])$, it follows that
\[
\balpha=\sum_{V\in \setIndSummandY}K_{P_{I_V}}.
\]
In \cref{Sec51: canonical poset}, given any permutation $\rho \in \SG_n$ satisfying $w_0(\alpha) \preceq_L \rho$, we construct a poset $P$ such that $\SGR{P} = [w_0 \rho^{-1},w_0 \, w_0(\alpha)^{-1}]_R$ (\cref{Thm: Diagram Dalpharho}).
In \cref{Sec: The poset description of indecomposable direct summands}, explicit algorithms for obtaining $P_{I_V}$'s are given
(\cref{Thm: posets for V and X} and \cref{explicit description for S}).
Finally, in \cref{Tableau descriptions of the coefficients in the expansions},
we describe the coefficients appearing in the quasisymmetric power sum expansions of the dual immaculate function $\DIF{\alpha}$ and the extended Schur function $\ESF{\alpha}$
in terms of modified border strip tableaux (for details, see \cref{def of border strip tableaux} and \cref{tableau description}).

\section{Preliminaries}\label{Sec: Preliminaries}
Denote by $\N$ (resp. $\N_0$) the set of positive (resp. nonnegative) integers.
In this section, let $n \in \N_0$.
Define $[n]$ to be $\{1,2,\ldots, n\}$ if $n > 0$ or $\emptyset$ otherwise.
In addition, set $[-1]$ to be $\emptyset$.

A \emph{composition} $\alpha$ of a nonnegative integer $n$, denoted by $\alpha \models n$, is a finite ordered list of positive integers $(\alpha_1, \alpha_2, \ldots, \alpha_k)$ satisfying $\sum_{i=1}^k \alpha_i = n$.
We call $k$ the \emph{length} of $\alpha$ and denote it by $\ell(\alpha)$.
For convenience, we define the empty composition $\emptyset$ to be the unique composition of size and length $0$.
If $\alpha_1 \ge \alpha_2 \ge \cdots \ge \alpha_{\ell(\alpha)}$, then we say that $\alpha$ is a \emph{partition} of $n$ and denote this by $\alpha \vdash n$. 
The partition obtained by sorting the parts of $\alpha$ in the weakly decreasing order is denoted by $\widetilde{\alpha}$.

Given $\alpha = (\alpha_1, \alpha_2, \ldots,\alpha_{\ell(\alpha)}) \models n$ and $I = \{i_1 < i_2 < \cdots < i_p\} \subset [n-1]$,
let
\begin{align*}
&\set(\alpha) := \{\alpha_1,\alpha_1+\alpha_2,\ldots, \alpha_1 + \alpha_2 + \cdots + \alpha_{\ell(\alpha)-1}\}, \\
&\comp(I) := (i_1,i_2 - i_1,\ldots,n-i_p).
\end{align*}
The set of compositions of $n$ is in bijection with the set of subsets of $[n-1]$ under the correspondence $\alpha \mapsto \set(\alpha)$ (or $I \mapsto \comp(I)$).
For $\alpha, \beta \models n$, if $\set(\beta) \subseteq \set(\alpha)$, then we say that \emph{$\alpha$ refines $\beta$} and denote $\alpha \preceq \beta$.

For $\alpha = (\alpha_1,\ldots,\alpha_{\ell(\alpha)}) \models n$, let $\alpha^\rmr$ be the composition $(\alpha_{\ell(\alpha)}, \ldots, \alpha_1)$ and $\alpha^\rmc$ the composition satisfying $\set(\alpha^\rmc) = [n-1] \setminus \set(\alpha)$.
And let 
$\alphamax$ be the largest part of $\alpha$. 

\subsection{Quasisymmetric functions}
\label{preliminaries on Quasisymmetric functions}
Quasisymmetric functions are power series of bounded degree in variables $x_{1},x_{2},\ldots$  with coefficients in $\Z$, which are shift invariant in the sense that the coefficient of the monomial $x_{1}^{\alpha_{1}}x_{2}^{\alpha_{2}}\cdots x_{k}^{\alpha_{k}}$ is equal to the coefficient of the monomial $x_{i_{1}}^{\alpha_{1}}x_{i_{2}}^{\alpha_{2}}\cdots x_{i_{k}}^{\alpha _{k}}$ for any strictly increasing sequence of positive integers $i_{1} < i_{2} < \cdots < i_{k}$ indexing the variables and any positive integer sequence $(\alpha_{1},\alpha_{2},\ldots,\alpha_{k})$ of exponents.

The ring $\Qsym$ of quasisymmetric functions is a graded $\Z$-algebra, decomposing as
\[
\Qsym =\bigoplus _{n\geq 0} \Qsym_n,
\]
where $\Qsym_n$ is the $\Z$-module consisting of all quasisymmetric functions that are homogeneous of degree $n$.

Given a composition $\alpha = (\alpha_1, \alpha_2, \ldots, \alpha_k)$ of $n$, the \emph{monomial quasisymmetric function} $M_\alpha$ is defined by 
\[
M_\alpha = \sum_{i_1 < i_2 < \cdots < i_k} x_{i_1}^{\alpha_1} x_{i_2}^{\alpha_2}
\cdots
x_{i_k}^{\alpha_k}
\]
and the \emph{fundamental quasisymmetric function} $F_\alpha$ is defined by 
\[
F_\alpha = \sum_{\substack{1 \le i_1 \le i_2 \le \cdots \le i_n \\ i_j < i_{j+1} \text{ if } j \in \set(\alpha)}} x_{i_1} x_{i_2} \cdots x_{i_n}.
\]
For every nonnegative integer $n$, it is known that both $\{F_\alpha \mid \alpha \models n\}$ and $\{M_\alpha \mid \alpha \models n\}$ are bases for $\Qsym_n$.

For $\alpha, \beta \models n$ with $\beta \succeq \alpha$, let $\alpha^{(i)}$ be the composition consisting of the parts of $\alpha$ that combine to form $\beta_i$, so $\alpha^{(i)}\models \beta_i$. 
Let 
\[
\pi(\alpha):=\prod_{j=1}^{\ell(\alpha)}\sum_{k=1}^{j}\alpha_k
\quad \text{and} \quad
\pi(\alpha, \beta):=\prod_{i=1}^{\ell(\beta)}\pi(\alpha^{(i)}).
\]
With these notations, the type 1 quasisymmetric power sum $\Psi_\alpha$ is defined by
\[
z_\alpha \sum_{\beta \succeq \alpha} \frac{1}{\pi(\alpha,\beta)}M_\beta,
\]
where $z_\alpha = \prod_{i \geq 1} i^{m_i} \cdot m_i !$ and $m_i$ is the number of parts equal to $i$ in $\alpha$.
For example, let $\alpha = (2,3,1) \models 6$.
Then $\beta \succeq \alpha$ for $\beta \in \{(2,3,1),(5,1), (2,4), (6)\}$.
One sees that $z_\alpha = 6$,
\begin{align*}
&\pi(\alpha,(2,3,1)) = 2 \cdot 3 \cdot 1 = 6,\\
&\pi(\alpha,(5,1)) = (2 \cdot (2+3)) \cdot 1 = 10,\\
&\pi(\alpha,(2,4)) = 2 \cdot (3 \cdot (3+1)) = 24 , \text{ and}\\
&\pi(\alpha,(6)) = 2 \cdot (2 + 3) \cdot (2 + 3 + 1) = 60.
\end{align*}
Hence, one has 
\[
\Psi_\alpha 
=  M_{(2,3,1)} + \frac{3}{5}M_{(5,1)} + \frac{1}{4}M_{(2,4)} + \frac{1}{10}M_{(6)}.
\]
It is known that $\Psi_\alpha$'s form a basis for $\Q \otimes_\Z \Qsym$
(for more details, see~\cite{20BDHMN}).
From now on, we will omit `type 1' and 
simply refer to $\Psi_\alpha$ as the \emph{quasisymmetric power sum}.

\begin{proposition}{\rm (\cite[Proposition 3.19]{20BDHMN})}\label{lem: quasi power sum and power sum}
Let $\lambda \vdash n$ and $p_{\lambda}$ be the power sum symmetric function associated with $\lambda$.
Then, $p_\lambda = \sum_{\widetilde{\alpha} = \lambda} \Psi_\alpha$.
\end{proposition}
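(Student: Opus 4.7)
The plan is to expand both sides in the monomial quasisymmetric basis $\{M_\beta\}$ and compare coefficients. For the left-hand side, expanding
\[
p_\lambda \;=\; \prod_{s=1}^{\ell(\lambda)} p_{\lambda_s} \;=\; \sum_{(i_1,\ldots,i_{\ell(\lambda)})} x_{i_1}^{\lambda_1}\cdots x_{i_{\ell(\lambda)}}^{\lambda_{\ell(\lambda)}}
\]
and extracting the coefficient of $x_1^{\beta_1}\cdots x_k^{\beta_k}$ (where $k = \ell(\beta)$), one sees that $[M_\beta]\,p_\lambda$ equals the number of ordered set partitions $(B_1,\ldots,B_k)$ of $[\ell(\lambda)]$ into $k$ (necessarily nonempty) blocks satisfying $\sum_{s \in B_j}\lambda_s = \beta_j$ for every $j$.

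For the right-hand side, since $z_\alpha$ depends only on the partition $\widetilde\alpha$, it equals $z_\lambda$ throughout the sum, so the definition of $\Psi_\alpha$ gives
\[
\bigl[M_\beta\bigr]\sum_{\widetilde\alpha = \lambda}\Psi_\alpha \;=\; z_\lambda \sum_{\substack{\widetilde\alpha = \lambda\\ \alpha \preceq \beta}} \frac{1}{\pi(\alpha,\beta)}.
\]
Any composition $\alpha$ with $\widetilde\alpha = \lambda$ and $\alpha \preceq \beta$ is uniquely a concatenation $\alpha = \alpha^{(1)}\alpha^{(2)}\cdots\alpha^{(k)}$ with $|\alpha^{(j)}| = \beta_j$, and by definition $\pi(\alpha,\beta) = \prod_j \pi(\alpha^{(j)})$. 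I therefore parametrize such $\alpha$ by an ordered multiset partition $(M_1,\ldots,M_k)$ of the multiset of parts of $\lambda$ with $\sum M_j = \beta_j$ together with an ordering $\alpha^{(j)}$ of each $M_j$, so the inner sum factorizes through the $M_j$'s.

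The key combinatorial lemma is
\[
\sum_{\gamma \,:\, \widetilde\gamma = \mu} \frac{1}{\pi(\gamma)} \;=\; \frac{1}{z_\mu} \qquad \text{for every partition } \mu,
\]
which I would prove by induction on $\ell(\mu)$ using $\pi(\gamma) = |\gamma|\cdot\pi(\gamma^-)$ (with $\gamma^-$ obtained by dropping the last entry of $\gamma$), grouping orderings by their last entry $g$, and noting that $z_{\mu\setminus\{g\}} = z_\mu/(g\, m_g)$ together with $\sum_g g\, m_g = |\mu|$ closes the step. Applying this identity to each inner sum reduces the right-hand coefficient to $z_\lambda \sum_{(M_1,\ldots,M_k)} \prod_j 1/z_{\widetilde{M_j}}$, where $\widetilde{M_j}$ denotes the partition obtained by sorting $M_j$.

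The last step converts the sum over ordered multiset partitions into one over ordered set partitions: each $(M_1,\ldots,M_k)$ is the image of precisely $\prod_i m_i!/\prod_{i,j} m_{i,j}!$ ordered set partitions $(B_1,\ldots,B_k)$ of $[\ell(\lambda)]$, where $m_i$ (resp.\ $m_{i,j}$) is the multiplicity of $i$ in $\lambda$ (resp.\ in $M_j$). Plugging in $z_\lambda = \prod_i i^{m_i} m_i!$ and $z_{\widetilde{M_j}} = \prod_i i^{m_{i,j}} m_{i,j}!$ and using $\sum_j m_{i,j} = m_i$, the prefactor $z_\lambda\prod_j 1/z_{\widetilde{M_j}}$ cancels exactly against the overcounting factor, leaving the raw count of set partitions $(B_1,\ldots,B_k)$ with $\sum_{s \in B_j}\lambda_s = \beta_j$, matching the left-hand coefficient. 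The main obstacle is the identity in the third paragraph; the remainder is careful multiplicity bookkeeping whose cancellations need to align precisely.
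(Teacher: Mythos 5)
The paper cites this proposition from \cite{20BDHMN} without reproducing a proof, so there is no in-paper argument to compare against; judged on its own terms, your proof is correct. Both coefficient extractions are right: $[M_\beta]\,p_\lambda$ counts ordered set partitions $(B_1,\ldots,B_k)$ of $[\ell(\lambda)]$ with $\sum_{s\in B_j}\lambda_s=\beta_j$, and on the right side the refinement convention $\beta\succeq\alpha$ and the factorization $\pi(\alpha,\beta)=\prod_j\pi(\alpha^{(j)})$ are used correctly. The lemma $\sum_{\widetilde\gamma=\mu}1/\pi(\gamma)=1/z_\mu$ is indeed the crux, and your induction is sound: peeling off the last entry gives $\pi(\gamma)=|\mu|\,\pi(\gamma^-)$, grouping by that last entry $g$ and applying the inductive hypothesis yields $\tfrac{1}{|\mu|}\sum_{g}\tfrac{1}{z_{\mu\setminus\{g\}}}=\tfrac{1}{|\mu|z_\mu}\sum_g g\,m_g=\tfrac{1}{z_\mu}$. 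Finally, the overcounting factor $\prod_i m_i!/\prod_{i,j}m_{i,j}!$ correctly counts the ordered set partitions lying over a given ordered multiset partition $(M_1,\ldots,M_k)$, and your identity $z_\lambda\prod_j z_{\widetilde{M_j}}^{-1}=\prod_i m_i!/\prod_{i,j}m_{i,j}!$ (using $\sum_j m_{i,j}=m_i$ to cancel the powers of $i$) makes the two sides agree. This is a clean self-contained combinatorial argument; the original source proceeds largely through the duality between $\mathrm{QSym}$ and $\mathrm{NSym}$ and the noncommutative power sums, so your monomial-coefficient argument is a worthwhile more elementary alternative.
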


\subsection{The weak Bruhat orders on the symmetric group}\label{Sec: weak Bruaht order}
The symmetric group $\SG_n$ is generated by simple transpositions $s_i := (i,i+1)$ with $1 \le i \le n-1$. 
An expression of $\sigma \in \SG_n$ that uses the minimal number of simple transpositions is called a \emph{reduced expression for $\sigma$}.
This minimal number is denoted by $\ell(\sigma)$ and called the \emph{length} of $\sigma$. 

The {\em left descent set} and {\em right descent set} of a permutation $\sigma$ are defined by 
\begin{align*}
\Des{L}{\sigma} &:= \{i \in [n-1] \mid \ell(s_i \sigma) < \ell(\sigma)\}  \text{ and}\\ \Des{R}{\sigma} &:= \{i \in [n-1] \mid \ell(\sigma s_i) < \ell(\sigma)\},
    \end{align*}
respectively.
The \emph{left weak Bruhat order} $\preceq_L$ and 
\emph{right weak Bruhat order} $\preceq_R$ on $\SG_n$
are defined to be the partial order on $\SG_n$ whose covering relation $\preceq_L^c$ and $\preceq_R^c$ are given as follows: 
\begin{align*}
& \sigma \preceq_L^c s_i \sigma \ \text{ if and only if } \ i \notin \Des{L}{\sigma} \text{ and }\\
& \sigma \preceq_R^c \sigma s_i \ \text{ if and only if }\  i \notin \Des{R}{\sigma},
\end{align*}
respectively.
Given $\sigma, \rho \in \SG_n$,
if $\sigma \preceq_L \rho$, then 
the \emph{left weak Bruhat interval} from $\sigma$ to $\rho$ is defined by
\[
[\sigma,\rho]_L := \{\gamma \in \SG_n \mid \sigma \preceq_L \gamma \preceq_L \rho \},
\]
and if $\sigma \preceq_R \rho$, then 
the \emph{right weak Bruhat interval}  from $\sigma$ to $\rho$ is defined by
\[
[\sigma,\rho]_R := \{\gamma \in \SG_n \mid \sigma \preceq_R \gamma \preceq_R \rho \}.
\]

For $\sigma \in \SG_n$, the {\em inversion} set and {\em coinversion} set of $\sigma$ are defined by 
\begin{align*}
\inv{\sigma} &:= \{(\sigma(i), \sigma(j))  \mid 1 \leq i < j \leq n, \text{ but } \sigma(i) > \sigma(j) \} \quad \text{and} \\ 
\coinv{\sigma} &:= \{(\sigma(i), \sigma(j)) \mid 1 \leq i < j \leq n \, \text{ and } \sigma(i) < \sigma(j) \},
\end{align*}
respectively.
Note that $\sigma \preceq_R \rho$ if and only if $\sigma^{-1} \preceq_L \rho^{-1}$.
Combining this fact with \cite[Proposition 3.1.3]{06BB} or \cite[Proposition 3.1]{91BW} it is clear that for $\sigma, \rho \in \SG_n$,
\begin{equation}\label{Eq: weak Bruhat right order2}
\begin{aligned}
\sigma \preceq_R \rho \ \text{ if and only if }  \ \inv{\sigma} \subseteq \inv{\rho}
\text{ if and only if } \ \coinv{\sigma} \supseteq \coinv{\rho}.
\end{aligned}
\end{equation} 

Let $w_0^{(n)}$ denote the longest element in $\SG_n$.
For $I \subseteq [n-1]$, let $\SG_{I}$ be the parabolic subgroup of $\SG_n$ generated by $\{s_i \mid i\in I\}$ and $w_0^{(n)}(I)$ the longest element in $\SG_{I}$.
For $\alpha \models n$, let $w_0^{(n)}(\alpha) := w_0^{(n)}(\set(\alpha))$. 
When $n$ is clear in the context, we will omit the superscript $(n)$ from $w_0^{(n)}$ and $w^{(n)}_0(\alpha)$.

\subsection{The 0-Hecke algebra and the quasisymmetric characteristic}\label{subsec: 0-Hecke alg and QSym}
The $0$-Hecke algebra $H_n(0)$ is the associative $\C$-algebra with $1$ generated by the elements $\opi_1,\opi_2,\ldots,\opi_{n-1}$ subject to the following relations:
\begin{align*}
\opi_i^2 &= -\opi_i \quad \text{for $1\le i \le n-1$},\\
\opi_i \opi_{i+1} \opi_i &= \opi_{i+1} \opi_i \opi_{i+1}  \quad \text{for $1\le i \le n-2$},\\
\opi_i \opi_j &= \opi_j \opi_i \quad \text{if $|i-j| \ge 2$}.
\end{align*}
Another set of generators consists of $\pi_i:= \opi_i + 1$ for $i=1,2,\ldots,n-1$ with the same relations as above except that $\pi_i^2 = \pi_i$.

For any reduced expression $s_{i_1} s_{i_2} \cdots s_{i_p}$ for $\sigma \in \SG_n$, let $\opi_{\sigma} := \opi_{i_1} \opi_{i_2} \cdots \opi_{i_p}$ and $\pi_{\sigma} := \pi_{i_1} \pi_{i_2 } \cdots \pi_{i_p}$.
It is well known that these elements are independent of the choices of reduced expressions, and both $\{\opi_\sigma \mid \sigma \in \SG_n\}$ and $\{\pi_\sigma \mid \sigma \in \SG_n\}$ are $\mathbb C$-bases for $H_n(0)$.

According to \cite{79Norton}, there are $2^{n-1}$ pairwise inequivalent irreducible (right) $H_n(0)$-modules and $2^{n-1}$ pairwise inequivalent projective indecomposable $H_n(0)$-modules, which are naturally indexed by compositions of $n$.
For $\alpha \models n$, let $\mathbf{F}^R_{\alpha}$ denote the $1$-dimensional $\mathbb{C}$-vector space corresponding to $\alpha$, spanned by a vector $v_{\alpha}$ and endow it with the right $H_n(0)$-action as follows: for each $1 \le i \le n-1$,
\[
v_\alpha \cdot \opi_i = \begin{cases}
0 & i \notin \set(\alpha),\\
- v_\alpha & i \in \set(\alpha).
\end{cases}
\]
This module is the irreducible $1$-dimensional $H_n(0)$-module corresponding to $\alpha$.
And, the projective indecomposable $H_n(0)$-module corresponding to $\alpha$
is given by the submodule 
$\calP^R_\alpha := \pi_{w_0(\alpha^\rmc)} \opi_{w_0(\alpha)}  H_n(0)$
of the regular representation of $H_n(0)$.

Let $\modR$ be the category of right $H_n(0)$-modules.
Let $\calR(\modR)$ denote the $\Z$-span of the isomorphism classes of finite dimensional right $H_n(0)$-modules. 
We denote by $[M]$ the isomorphism class corresponding to an $H_n(0)$-module $M$. 
The \emph{Grothendieck group} $G_0(\modR)$ is the quotient of $\calR(\modR)$ modulo the relations $[M] = [M'] + [M'']$ whenever there exists a short exact sequence $0 \ra M' \ra M \ra M'' \ra 0$. 
By abusing notation, we denote by $[M]$ the element of $G_0(\modR)$ corresponding to an $H_n(0)$-module $M$.
The irreducible $H_n(0)$-modules form a free $\Z$-basis for $G_0(\modR)$. Let
\[
\calG(\modRb) := \bigoplus_{n \ge 0} G_0(\modR).
\]
Given $m,n \in \N_0$, let us view $H_m(0) \otimes H_n(0)$ as the subalgebra of $H_{m+n}(0)$ generated by $\{\pi_i \mid i \in [m+n-1] \setminus \{m\} \}$.
For a right $H_m(0)$-module $M$ and $H_n(0)$-module $N$, we define
\begin{align*}
[M] \boxtimes [N] = 
\left[
M \otimes N \uparrow_{H_m(0) \otimes H_n(0)}^{H_{m+n}(0)}
\right]
\quad \text{and} \quad
\Delta([M]) := \sum_{0 \le k \le m} 
\left[
M \downarrow_{H_k(0) \otimes H_{m-k}(0)}^{H_{m}(0)}
\right]. 
\end{align*}
It was shown in \cite{09BL} that $\calG(\modRb)$ has a Hopf algebra structure with the product $\boxtimes$ and coproduct $\Delta$.
It was shown in \cite{96DKLT, 97KT} that the linear map
\begin{equation}\label{quasi characteristic right}
\ch_R: \calG(\modRb) \ra \Qsym, \quad [\bfF^R_{\alpha}] \mapsto F_{\alpha},
\end{equation}
called the \emph{quasisymmetric characteristic} map, is a Hopf algebra isomorphism.

\subsection{Duchamp--Hivert--Thibon's poset modules}
\label{Duchamp--Hivert--Thibon's poset modules}
Let $P$ be a poset with underlying set $[n]$
and $\setLE{P}$ the set of all linear extensions of $P$.
To each $E \in \setLE{P}$ we assign the following two permutations in $\SG_n$ (written in one-line notation)
\begin{align}\label{eq: eta_R and eta_L}
E_R := i_1 i_2 \cdots i_n
\quad \text{and} \quad 
E_L := j_1 j_2 \cdots j_n.
\end{align}
Here $i_k$ denotes the $k$th smallest element in $E$
and $j_k$ the number of elements $m \in [n]$ with $m \preceq_E k$ for $1 \le k \le n$.
Let
\[
\SGR{P} := \{E_R \mid E \in \setLE{P}\} \subset \SG_n
\quad \text{and} \quad
\SGL{P} := \{E_L \mid E \in \setLE{P}\} \subset \SG_n.
\]
In this paper, following the convention of drawing Hasse diagrams of posets,
we use a bold edge (or strict edge) between $x$ and $y$ when $x \preceq_P y$, but $x > y$, while we use a plain edge (or natural edge) when $x \preceq_P y$ and $x < y$ in the Hasse diagram of $P$.
Here $\preceq_P$ is used to denote the order of $P$ and $\le$ the usual order on $[n]$.

\begin{example}\label{Ex: poset-51342}
If
\begin{tikzpicture}[baseline=3.5mm]
\def \hp {0.35}
\def \vp {0.45}
\def \ccc {1mm}
\node[left] at (\hp*-0.7, \vp*1.0) {$P = $};
\node[shape=circle,draw,minimum size=\ccc*3, inner sep=0pt] at (0, 0) (A5) {\tiny $5$};
\node[shape=circle,draw,minimum size=\ccc*3, inner sep=0pt] at (\hp*1, \vp) (A1) {\tiny $1$};
\node[shape=circle,draw,minimum size=\ccc*3, inner sep=0pt] at (0*\hp, \vp*2) (A3) {\tiny $3$};
\node[shape=circle,draw,minimum size=\ccc*3, inner sep=0pt] at (2*\hp, \vp*2) (A4) {\tiny $4$};
\node[shape=circle,draw,minimum size=\ccc*3, inner sep=0pt] at (3*\hp, \vp*0.5) (A2) {\tiny $2$};

\draw[line width = \lw] (A5) -- (A1);
\draw (A1) -- (A3);
\draw (A1) -- (A4) -- (A2);
\end{tikzpicture} \ ,
then we have
\begin{align*}
\SGR{P} & = \{ 25134,52134,25143,52143,51234,51324,51243\}, \\
\SGL{P} & =\{31452,32451,31542,32541,23451,24351,23541\}.
\end{align*}
\end{example}

Let $\poset{n}$ be the set of posets with $[n]$ as the underlying set.
For each $P \in \poset{n}$, Duchamp--Hivert--Thibon~\cite{02DHT} constructed a right $H_n(0)$-module $\DHTMP{P}$ by endowing $\C\SGR{P}$ with the right $H_n(0)$-action induced from 
\begin{align}\label{eq: action of M_P}
\gamma \cdot \opi_i:= 
\begin{cases}
-\gamma & \text{if } i \in \Des{R}{\gamma},\\
0 & \text{if $i \notin \Des{R}{\gamma}$ and $\gamma s_i \notin \SGR{P}$},\\
\gamma s_i & \text{if } i \notin \Des{R}{\gamma} \text{ and } \gamma s_i \in \SGR{P}
\end{cases}
\end{align}
for $i \in [n-1]$ and $\gamma \in \SGR{P}$.
In this paper, we call this module as the \emph{poset module associated with $P$}. 

Every poset $P\in \poset{n}$ can be viewed as the labeled poset $(P, \omega)$, where 
the labeling $\omega: P \ra [n]$ is given by $\omega(i) = i$ for $1\le i \le n$.
Under this consideration, a map $f: [n] \ra \N_0$ is a \emph{$P$-partition} 
if it satisfies the following conditions:
\begin{enumerate}[label = {\rm (\arabic*)}]
\item 
If $i \preceq_P j$, then $f(i) \le f(j)$.
\item 
If $i \preceq_P j$ and $i > j$, then $f(i) < f(j)$.
\end{enumerate}
Here $\preceq_P$ is used to denote the order of $P$ and $\le$ the usual order on $[n]$.
Hence the \emph{$P$-partition generating function $K_P$} of $P$ is written as 
\[
\sum_{f:  \text{$P$-partition}} \prod_{i \in [n]} x_{f(i)}.
\]
It was shown in~\cite{99Stanley} that 
\[
K_P= \sum_{\sigma \in \SGR{P}} F_{\comp(\Des{R}{\sigma})}.
\]

Let us recall the results on the quasisymmetric characteristic of poset modules and their induction product.
Given $P_1 = ([m], \preceq_{P_1})$ and $P_2 = ([n], \preceq_{P_2})$, define the poset $P_1 \sqcup P_2 := ([m+n], \preceq_{P_1 \sqcup P_2})$ by 
\[
i \prec_{P_1 \sqcup P_2} j \quad \text{if and only if} \quad i \prec_{P_1} j \ \ \text{or} \ \  i-m \prec_{P_2} j-m.
\]

\begin{proposition}{\rm (\cite[Proposition 3.21]{02DHT})}\label{prop: DHT proposition}
Let $m,n$ be positive integers. 
Then we have the following.
\begin{enumerate}[label = {\rm (\alph*)}]
\item Let $P\in \poset{n}$. Then $\ch_R(\DHTMP{P}) = K_P$.

\item Let $P_1\in \poset{m}$ and $P_2\in \poset{n}$. Then $M_{P_1 \sqcup P_2} = M_{P_1} \boxtimes M_{P_2}$, and thus  $\ch_R(M_{P_1 \sqcup P_2}) = K_{P_1} K_{P_2} = \ch_R(M_{P_1}) \ch_R(M_{P_2})$.
\end{enumerate}
\end{proposition}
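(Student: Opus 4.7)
For part (a), the plan is to produce a complete composition series of $\DHTMP{P}$ whose factors are exactly the one-dimensional irreducibles $\bfF^R_{\comp(\Des{R}{\gamma})}$ indexed by $\gamma \in \SGR{P}$, and then match the resulting characteristic against Stanley's formula $K_P = \sum_{\sigma \in \SGR{P}} F_{\comp(\Des{R}{\sigma})}$ recalled earlier in this subsection. The key observation extracted from the action rule~\eqref{eq: action of M_P} is that $\gamma \cdot \opi_i$ is always either $\pm\gamma$ or a basis element $\gamma s_i \in \SGR{P}$ of strictly greater length; in particular, the action never decreases $\ell(\gamma)$. Enumerate $\SGR{P} = \{\gamma_1, \ldots, \gamma_N\}$ so that $\ell(\gamma_1) \ge \cdots \ge \ell(\gamma_N)$ and set $N_k := \Span_{\C}\{\gamma_1, \ldots, \gamma_k\}$. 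Each $N_k$ is then an $H_n(0)$-submodule, and in the quotient $N_k/N_{k-1}$ the class of $\gamma_k$ satisfies $\gamma_k \cdot \opi_i = -\gamma_k$ if $i \in \Des{R}{\gamma_k}$ and $0$ otherwise, which identifies $N_k/N_{k-1} \cong \bfF^R_{\comp(\Des{R}{\gamma_k})}$. Summing in $G_0(\modR)$ and applying $\ch_R$ gives $\ch_R(\DHTMP{P}) = \sum_{\gamma \in \SGR{P}} F_{\comp(\Des{R}{\gamma})} = K_P$.

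For part (b), I would construct an explicit $H_{m+n}(0)$-module isomorphism $\DHTMP{P_1} \boxtimes \DHTMP{P_2} \xrightarrow{\sim} \DHTMP{P_1 \sqcup P_2}$. The combinatorial backbone is the shuffle decomposition: every $\gamma \in \SGR{P_1 \sqcup P_2}$ is uniquely recovered from a triple $(\sigma_1, \sigma_2, S)$, where $S \subseteq [m+n]$ with $|S|=m$ records the positions of the values in $[m]$ within the one-line notation of $\gamma$, while $\sigma_1 \in \SGR{P_1}$ and $\sigma_2 \in \SGR{P_2}$ are the two induced subword patterns (with $\sigma_2$ shifted down by $m$). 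Choosing as coset representatives of $(\SG_m \times \SG_n) \backslash \SG_{m+n}$ the minimal length $(m,n)$-shuffles $w_S$, the induced module admits the basis $\{(\sigma_1 \otimes \sigma_2) \otimes \pi_{w_S}\}$, and the assignment sending this vector to $\gamma$ is a bijection of bases. Verifying $H_{m+n}(0)$-equivariance via the action~\eqref{eq: action of M_P} then establishes the first assertion of (b). The second assertion follows immediately from (a) together with the ring-homomorphism property $\ch_R([M]\boxtimes [N]) = \ch_R([M])\ch_R([N])$.

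The main technical obstacle lies in the equivariance verification for (b). The action of $\opi_i$ on $\gamma$ must be traced on the induction-side basis vector $(\sigma_1 \otimes \sigma_2) \otimes \pi_{w_S}$ with a careful case split according to whether the positions $i$ and $i+1$ in $\gamma$ both fall in the $[m]$-block, both fall in the $\{m+1,\ldots,m+n\}$-block, or straddle the boundary between them. In the first two cases the swap is governed by a descent of $\sigma_1$ or $\sigma_2$, while in the third it is governed by the shuffle $w_S$; to compare, one must push $\pi_i$ past $\pi_{w_S}$ using the quadratic and braid relations of $H_{m+n}(0)$. A parallel bookkeeping is required to match the obstruction conditions ($\gamma s_i \notin \SGR{P_1 \sqcup P_2}$ on one side versus $\sigma_j s_{i'} \notin \SGR{P_j}$ on the factor side). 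The verification is conceptually routine, but consistent tracking of descent sets through the shuffle bijection forms the technical heart of the argument.
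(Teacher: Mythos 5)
The paper offers no proof of this proposition; it is cited from \cite[Proposition 3.21]{02DHT} without argument, so the question is simply whether your reconstruction stands on its own.

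Part (a) is correct. The action rule sends each basis element $\gamma$ either to a scalar multiple of itself, to $0$, or to a basis element $\gamma s_i$ of strictly greater length, so sorting $\SGR{P}$ by weakly decreasing length does give a filtration of $\DHTMP{P}$ by submodules whose subquotients are $\bfF^R_{\comp(\Des{R}{\gamma})}$; applying $\ch_R$ and invoking the $F$-expansion $K_P = \sum_{\sigma \in \SGR{P}} F_{\comp(\Des{R}{\sigma})}$ recalled in the same subsection finishes the argument.

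In part (b) the strategy (shuffle decomposition of $\SGR{P_1 \sqcup P_2}$ matched against the parabolic basis of the induced module) is right and the dimension count works, but the isomorphism you write down is not equivariant, and the verification you defer would in fact fail. The culprit is the choice of $\pi_{w_S}$: already for $m=n=1$ with both posets trivial, $M_{P_1 \sqcup P_2}$ has basis $\{12, 21\}$ with $12 \cdot \opi_1 = 21$, whereas in the induced module (which is the regular module $H_2(0)$) one has $(1 \otimes 1) \cdot \opi_1 = 1 \otimes \opi_1 = 1 \otimes \pi_1 - 1 \otimes 1 \neq 1 \otimes \pi_1$. The correct identification replaces $\pi_{w_S}$ by $\opi_{w_S}$: the length-additive parabolic factorization $\gamma = (\sigma_1 \times \sigma_2)\, w_S$ gives $\opi_\gamma = (\opi_{\sigma_1} \otimes \opi_{\sigma_2})\,\opi_{w_S}$, and the map $(\sigma_1 \otimes \sigma_2) \otimes \opi_{w_S} \mapsto \gamma$ is the equivariant bijection of bases. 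With this single substitution, the case analysis you outline (both swapped positions inside one block, inside the other, or straddling the boundary) does carry through, and part (b) is sound.
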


\section{Poset module categories}
\label{Sec: poset module category}

For each nonnegative integer $n$, we define $\scrP{}{n}$ to be the full subcategory of $\modR$ whose objects are direct-sums of finitely many 
isomorphic copies of poset modules. 
The first aim of this section is to show that $\bigoplus_{n \ge 0} G_0(\scrP{}{n})$ has a Hopf algebraic structure that is isomorphic to $\Qsym$ with the induction product and restriction coproduct,
where $G_0(\scrP{}{n})$ denotes the Grothendieck group of $\scrP{}{n}$.
The second aim is to describe the $H_n(0)$-modules obtained by applying automorphisms on $H_n(0)$ in terms of poset modules.

\subsection{The Hopf algebra structure of \texorpdfstring{$\bigoplus_{n \geq 0} G_0(\scrP{}{n})$}{Lg}}
\label{Sec: The Hopf algebra structure}
The aim of the subsection is to show that $\bigoplus_{n \ge 0} G_0(\scrP{}{n})$ has a Hopf algebraic structure that is isomorphic to $\Qsym$ with the induction product and restriction coproduct.

Let $n$ be a nonnegative integer.
We first show that  
given any poset $P$ with $|\setLE{P}| > 1$, there exist $P',P'' \in \poset{n}$ such that
\[
\begin{tikzcd}
0 \arrow[r] & M_{P'} \arrow[r] & M_{P} \arrow[r] & M_{P''} \arrow[r] & 0
\end{tikzcd}
\]
is a short exact sequence of right $H_n(0)$-modules.
Using this result repeatedly yields that 
$G_0(\scrP{}{n})$ and $G_0(\modR)$ are isomorphic as abelian groups.

Given $P \in \poset{n}$, let 
$\inc{P}:= \{ (u,v) \in P \times P \mid \text{$u$ and $v$ are incomparable in $P$}\}$.
For $(u,v) \in \inc{P}$, let $\Puv{u}{v}$ be the poset in $\poset{n}$ whose covering relations are given as follows:
\[
y \text{ covers } x \text{ in } \Puv{u}{v}
\text{ if either } 
y \text{ covers } x \text{ in } P  \text{ or }  (x,y)=(u,v).
\]
From the definition of $\Puv{u}{v}$, it follows that $\SGR{P}$ is the disjoint union of $\SGR{\Puv{u}{v}}$ and $\SGR{\Puv{v}{u}}$ (see \cref{Ex: 51342 divided}).
Let $\upiota: M_{\Puv{v}{u}} \rightarrow M_{P}$ denote the natural injection defined by 
\[
\upiota(\gamma) = \gamma \quad \text{for } \gamma \in \SGR{\Puv{v}{u}},
\]
and let $\sfpr: M_{P} \rightarrow M_{\Puv{u}{v}}$ be the natural projection defined by
\[
\sfpr(\gamma) = \begin{cases}
\gamma & \text{if } \gamma \in \SGR{\Puv{u}{v}}, \\
0 & \text{otherwise,}
\end{cases}\quad \text{for } \gamma \in \SGR{P}.
\]
It holds that $\rmIm(\upiota) = \ker(\sfpr)$.

\begin{lemma}\label{Lem: short exact sequence MP}
Let $P \in \poset{n}$.
For $(u,v) \in \inc{P}$ with $u < v$ $($in the usual order$)$,
\[
\begin{tikzcd}
0 \arrow[r] & M_{\Puv{v}{u}} \arrow[r, "\upiota"] & M_{P} \arrow[r, "\sfpr"] & M_{\Puv{u}{v}} \arrow[r] & 0
\end{tikzcd}
\]
is a short exact sequence of $H_n(0)$-modules,
where $\upiota$ is the natural injection and $\sfpr$ the natural projection.
\end{lemma}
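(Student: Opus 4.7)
The plan is to dispatch the vector-space exactness of the sequence from the set-theoretic decomposition of $\SGR{P}$ already noted, and then concentrate all work on showing that $\upiota$ and $\sfpr$ intertwine the $H_n(0)$-actions.

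First I would record the disjoint-union decomposition $\SGR{P} = \SGR{\Puv{u}{v}} \sqcup \SGR{\Puv{v}{u}}$ stated in the paragraph preceding the lemma. With this in hand, $\upiota$ is simply the inclusion of a $\C$-linear direct summand, $\sfpr$ is the projection modulo that summand, and the identity $\operatorname{Im}(\upiota) = \ker(\sfpr)$ is automatic. So the only substantive task is the $H_n(0)$-linearity of the two maps.

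For this I would carry out a case analysis over the three branches of the action formula \eqref{eq: action of M_P}. The $i \in \Des{R}{\gamma}$ branch (output $-\gamma$) and the branch `$i \notin \Des{R}{\gamma}$ and $\gamma s_i \notin \SGR{P}$' (output $0$) are immediate, because both $\Des{R}{\gamma}$ and the condition `$\gamma s_i \in \SGR{P}$' are intrinsic to $\gamma$ and do not distinguish whether $\gamma$ is viewed inside $\SGR{P}$, $\SGR{\Puv{u}{v}}$, or $\SGR{\Puv{v}{u}}$. The only branch that requires thought is the third: $i \notin \Des{R}{\gamma}$ with $\gamma s_i \in \SGR{P}$, where one must locate $\gamma s_i$ within the partition $\SGR{\Puv{u}{v}} \sqcup \SGR{\Puv{v}{u}}$.

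The hard step, and essentially the only real obstacle, reduces to one combinatorial observation. Since $i \notin \Des{R}{\gamma}$ forces $\gamma(i) < \gamma(i+1)$, and since $u < v$ by hypothesis, the swap $\gamma \mapsto \gamma s_i$ can reverse the relative order of $u$ and $v$ in $\gamma$ only in the direction $\gamma(i)=u,\,\gamma(i+1)=v \;\mapsto\; \gamma s_i(i)=v,\,\gamma s_i(i+1)=u$; the opposite direction would demand $\gamma(i)=v > u = \gamma(i+1)$, contradicting $i \notin \Des{R}{\gamma}$. Two consequences close the argument: (i) $\SGR{\Puv{v}{u}}$ is stable under this branch of the action, so the third branch of the action on $M_{\Puv{v}{u}}$ agrees with that on $M_P$, giving $H_n(0)$-linearity of $\upiota$; and (ii) the only way $\gamma \in \SGR{\Puv{u}{v}}$ can be carried outside $\SGR{\Puv{u}{v}}$ is the displayed case, in which $\gamma s_i \notin \SGR{\Puv{u}{v}}$ forces the action on $M_{\Puv{u}{v}}$ to return $0$ while simultaneously $\sfpr(\gamma s_i) = 0$ in $M_P$, so both sides of the compatibility relation for $\sfpr$ vanish together. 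I do not foresee any further obstruction beyond careful bookkeeping of these cases.
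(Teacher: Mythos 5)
Your argument is correct and rests on the same essential fact as the paper's proof — the stability of $\SGR{P_{(v,u)}}$ under the upward branch $\gamma\mapsto\gamma s_i$ of the $\opi_i$-action — but it reaches this fact by a more concrete route and handles the surjection $\sfpr$ more explicitly. The paper deduces stability from the equivalence $\sigma\in\SGR{P_{(v,u)}}\Leftrightarrow(v,u)\in\inv{\sigma}$ together with the monotonicity of inversion sets along $\preceq_R$; your direct positional observation — that $s_i$ can flip the relative order of $u$ and $v$ only when $\{\gamma(i),\gamma(i+1)\}=\{u,v\}$, and that $i\notin\Des{R}{\gamma}$ with $u<v$ then forces $\gamma(i)=u$, so only the transition from $u$-before-$v$ to $v$-before-$u$ is possible — is exactly the elementary computation underlying that monotonicity, so the two proofs are the same idea in different clothing. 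Where your write-up is tighter is the quotient: the paper's proof states that $\C\SGR{P_{(u,v)}}$ is \emph{also} closed under the action, but this fails in general (in \cref{Ex: 51342 divided}, $51234\in\SGR{P_{(1,2)}}$ while $51234\cdot\opi_2=52134\in\SGR{P_{(2,1)}}$); your consequence (ii) — that when $\gamma\in\SGR{P_{(u,v)}}$ escapes, its image lies in $\ker(\sfpr)$, so both sides of the intertwining relation for $\sfpr$ vanish — is the correct and complete way to verify that $\sfpr$ is a module map. When you fill in the remaining bookkeeping, do also record that for $\gamma\in\SGR{P_{(v,u)}}$ one needs $\sfpr(\gamma\cdot\opi_i)=0$; this is an immediate corollary of your observation (i), since stability keeps $\gamma\cdot\opi_i$ inside $\ker(\sfpr)$.
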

\begin{proof}
For the assertion, we have only to show that both $\C \SGR{P_{(v,u)}}$ and $\C \SGR{P_{(u,v)}}$ are closed under the $H_n(0)$-action. 
To do this, take arbitrary $\gamma \in \SGR{P_{(v,u)}}$ and $i \in [n-1]$.
We may assume that $\gamma \cdot \opi_i = \gamma s_i$, since otherwise $\gamma \cdot \opi_i \in \C\SGR{P_{(v,u)}}$ is clear.
Note that for any $\sigma \in \SGR{P}$
\begin{align}\label{eq: coinv and Sigma_R}
(v,u) \in \inv{\sigma} \quad \text{if and only if} \quad \sigma \in \SGR{P_{(v,u)}}.
\end{align}
Since $\gamma \in \SGR{P_{(v,u)}}$, we have $(v,u) \in \inv{\gamma}$.
In addition, the assumption $\gamma \cdot \opi_i = \gamma s_i$ implies that $\gamma \preceq_R \gamma s_i$.
Thus, by~\cref{Eq: weak Bruhat right order2}, we have that $(v,u) \in \inv{\gamma s_i}$.
This together with \cref{eq: coinv and Sigma_R} shows that $\gamma s_i \in \SGR{P_{(v,u)}}$, as desired.
In a similar manner, we can see that $\C \SGR{P_{(u,v)}}$ is closed under the $H_n(0)$-action. 
\end{proof}

\begin{example}\label{Ex: 51342 divided}
For
\begin{tikzpicture}[baseline=3.5mm]
\def \hp {0.35}
\def \vp {0.45}
\def \ccc {1mm}
\node[left] at (\hp*-0.7, \vp*1.0) {$P = $};
\node[shape=circle,draw,minimum size=\ccc*3, inner sep=0pt] at (0, 0) (A5) {\tiny $5$};
\node[shape=circle,draw,minimum size=\ccc*3, inner sep=0pt] at (\hp*1, \vp) (A1) {\tiny $1$};
\node[shape=circle,draw,minimum size=\ccc*3, inner sep=0pt] at (0*\hp, \vp*2) (A3) {\tiny $3$};
\node[shape=circle,draw,minimum size=\ccc*3, inner sep=0pt] at (2*\hp, \vp*2) (A4) {\tiny $4$};
\node[shape=circle,draw,minimum size=\ccc*3, inner sep=0pt] at (3*\hp, \vp*0.5) (A2) {\tiny $2$};

\draw[line width = \lw] (A5) -- (A1);
\draw (A1) -- (A3);
\draw (A1) -- (A4) -- (A2);
\end{tikzpicture},
we have $(1,2) \in \inc{P}$. 
Note that
\begin{tikzpicture}[baseline=3.5mm]
\def \hp {0.35}
\def \vp {0.45}
\def \ccc {1mm}
\node[left] at (\hp*-0.7, \vp*1.0) {$P_{(2,1)} = $};
\node[shape=circle,draw,minimum size=\ccc*3, inner sep=0pt] at (0, 0) (A5) {\tiny $5$};
\node[shape=circle,draw,minimum size=\ccc*3, inner sep=0pt] at (\hp*1, \vp) (A1) {\tiny $1$};
\node[shape=circle,draw,minimum size=\ccc*3, inner sep=0pt] at (0*\hp, \vp*2) (A3) {\tiny $3$};
\node[shape=circle,draw,minimum size=\ccc*3, inner sep=0pt] at (2*\hp, \vp*2) (A4) {\tiny $4$};
\node[shape=circle,draw,minimum size=\ccc*3, inner sep=0pt] at (3*\hp, \vp*0.2) (A2) {\tiny $2$};

\draw[line width = \lw] (A5) -- (A1);
\draw (A1) -- (A3);
\draw (A1) -- (A4) -- (A2);
\draw[line width = \lw] (A2) -- (A1);
\end{tikzpicture}
and
\begin{tikzpicture}[baseline=7mm]
\def \hp {0.35}
\def \vp {0.45}
\def \ccc {1mm}
\node[left] at (\hp*-0.7, \vp*1.8) {$P_{(1,2)} = $};
\node[shape=circle,draw,minimum size=\ccc*3, inner sep=0pt] at (0, 0) (A5) {\tiny $5$};
\node[shape=circle,draw,minimum size=\ccc*3, inner sep=0pt] at (\hp*1, \vp) (A1) {\tiny $1$};
\node[shape=circle,draw,minimum size=\ccc*3, inner sep=0pt] at (0*\hp, \vp*2) (A3) {\tiny $3$};
\node[shape=circle,draw,minimum size=\ccc*3, inner sep=0pt] at (3*\hp, \vp*3) (A4) {\tiny $4$};
\node[shape=circle,draw,minimum size=\ccc*3, inner sep=0pt] at (2*\hp, \vp*2) (A2) {\tiny $2$};

\draw[line width = \lw] (A5) -- (A1);
\draw (A1) -- (A3);
\draw (A1) -- (A2) -- (A4);
\end{tikzpicture}.
The sets
\begin{align*}
\SGR{P} & = \{ 25134,52134,25143,52143,51234,51324,51243\}, \\
\SGR{\Puv{1}{2}} & = \{51234,51324,51243 \},  \\
\SGR{\Puv{2}{1}} & = \{25134,52134,25143,52143\}
\end{align*}
are bases for $M_{P}$, $\upiota(M_{P_{(2,1)}}) \cong M_{P_{(2,1)}}$, and $M_{P} / \upiota(M_{P_{(2,1)}}) \cong M_{P_{(1,2)}}$, respectively.
The $H_5(0)$-actions on these bases are illustrated in \cref{fig: H5(0)-actions}. 
In this figure, $\cdot \opi_i$ ($1 \leq i \leq 4$) denotes the right $H_n(0)$-action and
the symbol
\begin{tikzpicture}[baseline=-1mm]
\def \hp {3em}
\def \vp {4em}
\node at (\hp*0, \vp*0) {} edge [out=40,in=320, loop] ();
\node at (\hp*0 + 0.7*\hp, \vp*0) {\tiny $\cdot \opi_i$};
\end{tikzpicture}
represents that $\opi_i$ acts as $-\id$.
For each basis element, we do not display the $\opi_i$-action on it if $\opi_i$ acts as $0$.
\begin{figure}[ht]
\centering
\captionsetup{justification=centering}
\[
\begin{tikzpicture}
\def \hp {5.3em}
\def \vp {4em}
\node at (\hp*0, \vp*0) (A1) {$25134$};
\node[right,xshift=\hp*0.15] at (A1) {} edge [out=40,in=320, loop] ();
\node[right,xshift=\hp*0.5] at (A1) {\small $\cdot \opi_2$};

\node at (\hp*2, \vp*0) (A2) {$51234$};
\node[right,xshift=\hp*0.15] at (A2) {} edge [out=40,in=320, loop] ();
\node[right,xshift=\hp*0.5] at (A2) {\small $\cdot \opi_1$};

\node at (\hp*-1, \vp*-1) (B1) {$52134$};
\node at (\hp*-1 + 0.25*\hp, \vp*-1) {} edge [out=40,in=320, loop] ();
\node[right,xshift=\hp*0.5] at (B1) {\small $\cdot \opi_1, \cdot \opi_2$};

\node at (\hp*1, \vp*-1) (B2) {$25143$};
\node at (\hp*1 + 0.25*\hp, \vp*-1) {} edge [out=40,in=320, loop] ();
\node[xshift=\hp*0.85] at (B2) {\small $\cdot \opi_2, \cdot \opi_4$};

\node at (\hp*2.5, \vp*-1) (B3) {$51324$};
\node at (\hp*2.5 + 0.25*\hp, \vp*-1) {} edge [out=40,in=320, loop] ();
\node[xshift=\hp*0.85] at (B3) {\small $\cdot \opi_1, \cdot \opi_3$};

\node at (\hp*4, \vp*-1) (B4) {$51243$};
\node at (\hp*4 + 0.25*\hp, \vp*-1) {} edge [out=40,in=320, loop] ();
\node[xshift=\hp*0.85] at (B4) {\small $\cdot \opi_1, \cdot \opi_4$};

\node at (\hp*0, \vp*-2) (C1) {$52143$};
\node at (\hp*0 + 0.25*\hp, \vp*-2) {} edge [out=40,in=320, loop] ();
\node[xshift=\hp*1] at (C1) {\small $\cdot \opi_1, \cdot \opi_2, \cdot \opi_4$};

\draw[->] (A1) -- (B1);
\node at (\hp*-0.6, \vp*-0.4) {\small $\cdot \opi_1$};
\draw[->] (A1) -- (B2) node[right,pos=0.2] {\small $\cdot \opi_4$};
\draw[->] (A2) -- (B3) node[right,pos=0.6]  {\small $\cdot \opi_3$};
\draw[->] (A2) -- (B4) node[right,pos=0.4] {\small $\cdot \opi_4$};
\draw[->] (A2) -- (B1) node[below,pos=0.03] {\small $\cdot \opi_2$};
\draw[->] (B1) -- (C1);
\node at (\hp*-0.6, \vp*-1.6) {\small $\cdot \opi_4$};
\draw[->] (B2) -- (C1);
\node at (\hp*0.6, \vp*-1.6) {\small $\cdot \opi_1$};
\draw[->] (B4) -- (C1) node[pos=0.43,below]  {\small $\cdot \opi_2$};

\draw[-,blue,dotted,thick]
(\hp*-1.5, \vp*0.3) -- (\hp*-1.5, \vp*-2.3) -- (\hp*2.1, \vp*-2.3) -- (\hp*2.1, \vp*-0.85) -- (\hp*0.7, \vp*0.3) -- (\hp*-1.5, \vp*0.3);
\node[below] at (\hp*0.4,\vp*-2.35) {$\SGR{P_{(2,1)}}$};

\draw[-,red,dotted,thick] 
(\hp*1.65, \vp*0.3) -- (\hp*1.65, \vp*-0.25) -- (\hp*2.4, \vp*-1.5) -- (\hp*5.2, \vp*-1.5) -- (\hp*5.2, \vp*-0.8) -- (\hp*2.7, \vp*0.3) -- (\hp*1.65, \vp*0.3);
\node[below] at (\hp*3.7,\vp*-1.6) {$\SGR{P_{(1,2)}}$};
\end{tikzpicture}
\]
\caption{The right $H_5(0)$-actions on $\SGR{P}$, $\SGR{P_{(2,1)}}$, and $\SGR{P_{(1,2)}}$}
\label{fig: H5(0)-actions}
\end{figure}
\end{example}

We then study the induction product and restriction coproduct on $\bigoplus_{n \geq 0}G_0(\scrP{}{n})$.
In \cite[Proposition 3.21]{02DHT}, Duchamp--Hivert--Thibon observed that the induction product of poset modules is again a poset module.
So we will focus on the restriction of poset modules.
Let us introduce necessary definitions and notation.

For $P \in \poset{n}$,
a subset $Q$ of $P$ is called an 
\emph{induced subposet}  (simply, {\em subposet}) of $P$ if $Q$ is equipped with a partial order $\preceq_Q$ satisfying that $x \preceq_Q y$ if and only if $x \preceq_P y$ for all $x,y \in Q$.
A \emph{lower subposet of $P$} is a subposet $Q$ of $P$ such that for all $x \in Q$ and $y \in P$, if $y \preceq_P x$ then $y \in Q$.
For $0 \le m \le n$, define
\[
\sfLS{P}{m} := \left\{Q \; \middle| \; 
\text{$Q$ is a lower subposet of $P$ and $|Q| = m$}
\right\}.
\]
For $Q \in \sfLS{P}{m}$, by abuse of notation, we will use $P \setminus Q$ to denote the subposet of $P$ whose underlying set is $P \setminus Q$.
Given a subposet $Q = \{k_1 < k_2 < \cdots < k_m\}$ of $P$, define $\rmst(Q)$ to be the poset $([m], \preceq_{\rmst(Q)})$ such that $i \preceq_{\rmst(Q)} j$ if $k_i \preceq_Q k_j$ for $i,j \in [m]$.

\begin{lemma}\label{Lem: restriction}
Let $P \in \poset{n}$ and $0 \leq m \leq n$. 
As $H_m(0) \otimes H_{n-m}(0)$-modules,
\begin{align*}
\DHTMP{P}\downarrow_{H_m(0) \otimes H_{n-m}(0)}^{H_{n}(0)} 
\hspace{1ex} \cong \hspace{-0.5ex} \bigoplus_{Q \in \sfLS{P}{m} }
\DHTMP{\rmst(Q)} \otimes \DHTMP{\rmst(P\setminus Q)}.
\end{align*}
\end{lemma}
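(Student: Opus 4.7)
The plan is to decompose $\SGR{P}$ by recording, for each linear extension, the set of the $m$ smallest positions, and then show that each block carries a submodule isomorphic to the stated tensor product. Writing $\gamma \in \SGR{P} \subseteq \SG_n$ in one-line notation $\gamma = \gamma_1\gamma_2 \cdots \gamma_n$, put $Q(\gamma) := \{\gamma_1,\gamma_2,\ldots,\gamma_m\}$. First I would verify that $Q(\gamma)$ is always a lower subposet of $P$: if $y \preceq_P x$ and $x \in Q(\gamma)$ then $\gamma^{-1}(y) \le \gamma^{-1}(x) \le m$, so $y \in Q(\gamma)$. This gives a function $\SGR{P} \to \sfLS{P}{m}$, and I would partition $\SGR{P} = \bigsqcup_{Q \in \sfLS{P}{m}} \SGR{P}^Q$ where $\SGR{P}^Q := \{\gamma : Q(\gamma) = Q\}$.

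Next I would check that each $V_Q := \C\,\SGR{P}^Q$ is a sub-$(H_m(0) \otimes H_{n-m}(0))$-module of $\DHTMP{P}\downarrow$. The restricted algebra is generated by the $\opi_i$ with $i \in [n-1]\setminus\{m\}$, and each such $s_i$ only permutes two positions lying entirely in $[1,m]$ or entirely in $[m+1,n]$. Hence $Q(\gamma s_i) = Q(\gamma)$ whenever the action produces $\gamma s_i$, and the three-case formula \eqref{eq: action of M_P} never leaves $\C\SGR{P}^Q$. Summing over $Q$ yields $\DHTMP{P}\downarrow \,=\, \bigoplus_Q V_Q$ as a direct sum of sub-$H_m(0)\otimes H_{n-m}(0)$-modules.

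Fix $Q \in \sfLS{P}{m}$ with $Q = \{k_1 < \cdots < k_m\}$ and $P \setminus Q = \{l_1 < \cdots < l_{n-m}\}$. I would then define a linear bijection
\[
\mPhi_Q \colon V_Q \;\longrightarrow\; \DHTMP{\rmst(Q)} \otimes \DHTMP{\rmst(P\setminus Q)},\qquad \gamma \longmapsto \sigma \otimes \tau,
\]
where $\sigma \in \SG_m$ and $\tau \in \SG_{n-m}$ are determined by $\gamma_i = k_{\sigma(i)}$ for $i \leq m$ and $\gamma_{m+j} = l_{\tau(j)}$ for $1 \leq j \leq n-m$. The target basis is reached: $\sigma \in \SGR{\rmst(Q)}$ because $i \prec_{\rmst(Q)} j$ forces $k_i \prec_P k_j$ and hence $\sigma^{-1}(i) = \gamma^{-1}(k_i) < \gamma^{-1}(k_j) = \sigma^{-1}(j)$; likewise for $\tau$. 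Conversely, any $(\sigma,\tau) \in \SGR{\rmst(Q)} \times \SGR{\rmst(P\setminus Q)}$ recombines (via the values $k_{\sigma(\cdot)}$ and $l_{\tau(\cdot)}$) into some $\gamma \in \SG_n$ with $Q(\gamma) = Q$; one checks $\gamma \in \SGR{P}$ by splitting any relation $i' \prec_P j'$ into the cases $i',j' \in Q$, $i',j' \in P \setminus Q$, and $i' \in Q$, $j' \in P \setminus Q$ (the case $i' \in P\setminus Q$, $j' \in Q$ is excluded because $Q$ is a lower subposet), each of which reduces to the membership of $\sigma$ or $\tau$ in the corresponding $\SGR{(\cdot)}$ or to $\gamma^{-1}(i') \le m < \gamma^{-1}(j')$.

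The remaining, and probably most delicate, step is to check that $\mPhi_Q$ intertwines the actions, i.e., that the generators $\opi_i$ ($i < m$) and $\opi_i$ ($i > m$) of $H_m(0) \otimes H_{n-m}(0)$ act on $\gamma$ exactly as $\opi_i \otimes 1$ and $1 \otimes \opi_{i-m}$ act on $\sigma \otimes \tau$. For $i < m$, the equivalence $i \in \Des{R}{\gamma} \Leftrightarrow i \in \Des{R}{\sigma}$ is immediate from $\gamma_i = k_{\sigma(i)}$, $\gamma_{i+1} = k_{\sigma(i+1)}$ and the order-preservation of the indexing. The main point is to prove the equivalence $\gamma s_i \in \SGR{P} \Leftrightarrow \sigma s_i \in \SGR{\rmst(Q)}$. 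Since $s_i$ only swaps two positions inside the first $m$ slots, all relations in $P$ involving an element of $P \setminus Q$ are preserved trivially (their positions are unchanged and the element of $Q$ involved still lies in the first $m$ positions), while the relations entirely inside $Q$ translate into $\SGR{\rmst(Q)}$-membership via $\mPhi_Q$; the lower-subposet property is essential here to exclude cross-relations that would break either direction. The case $i > m$ is symmetric. Combining the three-case action formula \eqref{eq: action of M_P} on $\DHTMP{P}$, $\DHTMP{\rmst(Q)}$ and $\DHTMP{\rmst(P\setminus Q)}$ with these equivalences yields the claimed isomorphism, and summing over $Q$ completes the proof.
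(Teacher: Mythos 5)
Your proof is correct and follows essentially the same approach as the paper: the paper defines a single linear map $f(E_R) = \rmst_{1,m}(E_R) \otimes \rmst_{m+1,n}(E_R)$, which is exactly the block-wise standardization $\Phi_Q$ you describe (with the partition of $\SGR{P}$ by the lower subposet $Q(\gamma)$ left implicit), and then checks intertwining of the $\opi_i$-actions just as you do. Your version is more detailed — it spells out why each block $V_Q$ is a submodule, why $\Phi_Q$ is bijective, and why the lower-subposet property guarantees cross-relations are preserved — all of which the paper asserts as ``easily seen.''
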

\begin{proof}
For $1 \le r \le s \le n$, define a map 
$$
\rmst_{r,s}: \SG_n \ra \SG_{s-r+1}, 
\quad
\gamma \mapsto \rmst_{r,s}(\gamma),
$$ 
where $\rmst_{r,s}(\gamma)$ is a permutation in $\SG_{s-r+1}$ defined by 
$$
(\rmst_{r,s}(\gamma))(i) = |\{j \in [r,s] \mid \gamma(j) \le \gamma(i) \}|
\quad \text{for all $1\le i \le s-r+1$.}
$$
Let $f : \DHTMP{P} \ra 
\bigcup_{Q \in \sfLS{P}{m}} M_{\rmst(Q)} \otimes M_{\rmst(P \setminus Q)}$ be the $\C$-linear map defined by letting
\[
f(E_R) := 
\rmst_{1,m}(E_R) \otimes  \rmst_{m+1,n}(E_R)
\]
and extending it by linearity.
One can easily see that $f$ is a well-defined bijection.

To prove that $f$ is an $H_m(0) \otimes H_{n-m}(0)$-module homomorphism, it suffices to show that
\begin{align*}
f(E_R \cdot \opi_i) = \begin{cases}
f(E_R) \cdot \opi_i \otimes 1
& \text{for $1 \leq i \leq m-1$,} \\
f(E_R) \cdot 1 \otimes \opi_{i-m}
& \text{for $m+1 \leq i \leq n-1$.}
\end{cases}
\end{align*}
In fact, this follows from that 
\begin{align*}
f(E_R \cdot \opi_i) 
& = \begin{cases}
\rmst_{1,m}(E_R) \cdot \opi_i \otimes \rmst_{m+1,n}(E_R) & \text{if $1 \leq i \leq m-1$,} \\
\rmst_{1,m}(E_R) \otimes \rmst_{m+1,n}(E_R) \cdot \opi_{i-m} & \text{if $m+1 \leq i \leq n-1$.}
\end{cases}
\end{align*}
\end{proof}

\begin{example}\label{eg: restriction}
Given
\begin{tikzpicture}[baseline=3.5mm]
\def \hp {0.35}
\def \vp {0.45}
\def \ccc {1mm}
\node[left] at (\hp*-0.7, \vp*1.0) {$P = $};
\node[shape=circle,draw,minimum size=\ccc*3, inner sep=0pt] at (0, 0) (A5) {\tiny $5$};
\node[shape=circle,draw,minimum size=\ccc*3, inner sep=0pt] at (\hp*1, \vp) (A1) {\tiny $1$};
\node[shape=circle,draw,minimum size=\ccc*3, inner sep=0pt] at (0*\hp, \vp*2) (A3) {\tiny $3$};
\node[shape=circle,draw,minimum size=\ccc*3, inner sep=0pt] at (2*\hp, \vp*2) (A4) {\tiny $4$};
\node[shape=circle,draw,minimum size=\ccc*3, inner sep=0pt] at (3*\hp, \vp*0.5) (A2) {\tiny $2$};

\draw[line width=0.5mm] (A5) -- (A1);
\draw (A1) -- (A3);
\draw (A1) -- (A4) -- (A2);
\end{tikzpicture}\ ,
one has
$\sfLS{P}{3} = \left\{
\begin{tikzpicture}[baseline=3.5mm]
\def \hp {0.35}
\def \vp {0.45}
\def \ccc {1mm}
\node[left] at (\hp*-0.7, \vp*1.0) {$Q_1 = $};
\node[shape=circle,draw,minimum size=\ccc*3, inner sep=0pt] at (0, 0) (A5) {\tiny $5$};
\node[shape=circle,draw,minimum size=\ccc*3, inner sep=0pt] at (\hp*1, \vp) (A1) {\tiny $1$};
\node[shape=circle,draw,minimum size=\ccc*3, inner sep=0pt] at (0*\hp, \vp*2) (A3) {\tiny $3$};
\draw[line width=0.5mm] (A5) -- (A1);
\draw (A1) -- (A3);
\end{tikzpicture},
\ 
\begin{tikzpicture}[baseline=3.5mm]
\def \hp {0.35}
\def \vp {0.45}
\def \ccc {1mm}
\node[left] at (\hp*-0.7, \vp*1.0) {$Q_2 = $};
\node[shape=circle,draw,minimum size=\ccc*3, inner sep=0pt] at (0, 0) (A5) {\tiny $5$};
\node[shape=circle,draw,minimum size=\ccc*3, inner sep=0pt] at (\hp*1, \vp) (A1) {\tiny $1$};
\node[shape=circle,draw,minimum size=\ccc*3, inner sep=0pt] at (3*\hp, \vp*0.5) (A2) {\tiny $2$};

\draw[line width=0.5mm] (A5) -- (A1);
\end{tikzpicture}
\right\}.$
By \cref{Lem: restriction} we have that
\begin{align*}
\DHTMP{P}\downarrow_{H_3(0) \otimes H_2(0)}^{H_{5}(0)}
& \cong 
\left( \DHTMP{\rmst(Q_1)}\otimes\DHTMP{\rmst(P \setminus Q_1)} \right) \oplus 
\left( \DHTMP{\rmst(Q_2)}\otimes\DHTMP{\rmst(P \setminus Q_2)} \right).
\end{align*}
Indeed, if we set $\opi^{(1)}_1 := \opi_1 \otimes \id$, $\opi^{(1)}_2 := \opi_2 \otimes \id$, and $\opi^{(2)}_1 := \id \otimes \opi_1$, then the $H_3(0) \otimes H_2(0)$-action on $\DHTMP{P}\downarrow_{H_3(0) \otimes H_2(0)}^{H_{5}(0)}$ and $(\DHTMP{\rmst(Q_1)}\otimes\DHTMP{\rmst(P \setminus Q_1)}) \oplus (\DHTMP{\rmst(Q_2)}\otimes\DHTMP{\rmst(P \setminus Q_2)})$
can be illustrated as in \cref{fig: action on restriction} and \cref{fig: action on tensor for restriction}, respectively.

\begin{figure}[ht]
\begin{displaymath}
\begin{tikzpicture}
\def \hp {2}
\def \vp {1.5}
\node at (\hp*0, \vp*0) (A1) {$25134$};
\node at (0.2*\hp, 0*\vp) {} edge [out=40,in=320, loop] ();
\node at (-0.15*\hp + -0.5*\hp, 0.1*\vp -0.5*\vp) {\footnotesize $\opi_1^{(1)}$};
\node at (0.15*\hp + 0.5*\hp, 0.1*\vp -0.5*\vp) {\footnotesize $\opi_1^{(2)}$};
\node at (\hp*2, \vp*0) (A2) {$51234$};
\node at (2.2*\hp, 0*\vp) {} edge [out=40,in=320, loop] ();
\node at (2.7*\hp, 0.05*\vp) {\footnotesize $\opi_1^{(1)}$};
\node at (2.8*\hp + 0.5*\hp, 0*\vp -0.45*\vp) {\footnotesize $\opi_1^{(2)}$};

\node at (\hp*-1, \vp*-1) (B1) {$52134$};
\node at (0.2*\hp+\hp*-1, -1*\vp) {} edge [out=40,in=320, loop] ();
\node at (0.85*\hp+\hp*-1, -0.95*\vp) {\footnotesize $\opi_1^{(1)}, \opi_2^{(1)}$};
\node at (-0.2*\hp - 0.5*\hp, -0.1*\vp -1.5*\vp) {\footnotesize $\opi_1^{(2)}$};

\node at (\hp*1, \vp*-1) (B2) {$25143$};
\node at (0.2*\hp+\hp*1, -1*\vp) {} edge [out=40,in=320, loop] ();
\node at (0.85*\hp+\hp*1, -0.95*\vp) {\footnotesize $\opi_2^{(1)}, \opi_1^{(2)}$};
\node at (0.15*\hp + 0.5*\hp, -0.1*\vp -1.5*\vp) {\footnotesize $\opi_1^{(1)}$};

\node at (\hp*2.6, \vp*-1) (B3) {$51324$};
\node at (0.3*\hp+\hp*2.5, -1*\vp) {} edge [out=40,in=320, loop] ();
\node at (0.8*\hp+\hp*2.5, -0.95*\vp) {\footnotesize $\opi_1^{(1)}$};

\node at (\hp*4, \vp*-1) (B4) {$51243$};
\node at (0.2*\hp+\hp*4, -1*\vp) {} edge [out=40,in=320, loop] ();
\node at (0.85*\hp+\hp*4, -0.95*\vp) {\footnotesize $\opi_1^{(1)},\opi_1^{(2)}$};

\node at (\hp*0, \vp*-2) (C1) {$52143$};
\node at (0.2*\hp+\hp*0, -2*\vp) {} edge [out=40,in=320, loop] ();
\node at (1.05*\hp+\hp*0, -2*\vp) {\footnotesize $\opi_1^{(1)},\opi_2^{(1)},\opi_1^{(2)}$};
\draw[->] (B4) -- (C1) node[below,pos=0.2] {\footnotesize $\opi_2^{(1)}$};

\draw[->] (A1) -- (B1);
\draw[->] (A1) -- (B2);
\draw[->] (A2) -- (B1) node[below,pos=0.1] {\footnotesize $\opi_2^{(1)}$};

\draw[->] (A2) -- (B4);
\draw[->] (B1) -- (C1);
\draw[->] (B2) -- (C1);
\end{tikzpicture}
\end{displaymath}
\caption{The $H_3(0) \otimes H_2(0)$-action on $\DHTMP{P}\downarrow_{H_3(0) \otimes H_2(0)}^{H_{5}(0)}$ in \cref{eg: restriction}}
\label{fig: action on restriction}

\begin{tikzpicture}
\def \hp {2}
\def \vp {1.5}
\def \hhhh {7}
\node at (-3*\hp, \vp*-1) (B3) {$312 \otimes 12$};
\node at (0.2*\hp+\hp*-2.85, -1*\vp) {} edge [out=40,in=320, loop] ();
\node at (0.7*\hp+\hp*-2.85, -1*\vp) {\footnotesize $\opi_1^{(1)}$};
\node at (-1.75*\hp, \vp*-1) {$\oplus$};

\node at (\hp*0, \vp*0) (A1) {$231 \otimes 12$};
\node at (0.35*\hp, 0*\vp) {} edge [out=40,in=320, loop] ();
\node at (0.85*\hp, 0.05*\vp) {\footnotesize $\opi_2^{(1)}$};
\node at (-0.1*\hp + -0.5*\hp, 0.1*\vp -0.5*\vp) {\footnotesize $\opi_1^{(1)}$};
\node at (0.15*\hp + 0.5*\hp, 0.1*\vp -0.5*\vp) {\footnotesize $\opi_1^{(2)}$};

\node at (\hp*2, \vp*0) (A2) {$312 \otimes 12$};
\node at (2.35*\hp, 0*\vp) {} edge [out=40,in=320, loop] ();
\node at (2.85*\hp, 0.05*\vp) {\footnotesize $\opi_1^{(1)}$};
\node at (2.25*\hp + 0.5*\hp, 0*\vp -0.5*\vp) {\footnotesize $\opi_1^{(2)}$};

\node at (\hp*-1, \vp*-1) (B1) {$321 \otimes 12$};
\node at (0.35*\hp+\hp*-1, -1*\vp) {} edge [out=40,in=320, loop] ();
\node at (1*\hp+\hp*-1, -0.95*\vp) {\footnotesize $\opi_1^{(1)},\opi_2^{(1)}$};
\node at (-0.15*\hp - 0.5*\hp, -0.1*\vp -1.5*\vp) {\footnotesize $\opi_1^{(2)}$};
\node at (\hp*1, \vp*-1) (B2) {$231 \otimes 21$};
\node at (0.35*\hp+\hp*1, -1*\vp) {} edge [out=40,in=320, loop] ();
\node at (1*\hp+\hp*1, -0.95*\vp) {\footnotesize $\opi_2^{(1)},\opi_1^{(2)}$};
\node at (0.15*\hp + 0.5*\hp, -0.1*\vp -1.5*\vp) {\footnotesize $\opi_1^{(1)}$};
\node at (\hp*3, \vp*-1) (B4) {$312 \otimes 21$};
\node at (0.35*\hp+\hp*3, -1*\vp) {} edge [out=40,in=320, loop] ();
\node at (1*\hp+\hp*3, -0.95*\vp) {\footnotesize $\opi_1^{(1)},\opi_1^{(2)}$};
\node at (\hp*0, \vp*-2) (C1) {$321 \otimes 21$};
\node at (0.35*\hp+\hp*0, -2*\vp) {} edge [out=40,in=320, loop] ();
\node at (1.2*\hp+\hp*0, -1.95*\vp) {\footnotesize $\opi_1^{(1)},\opi_2^{(1)},\opi_1^{(2)}$};

\draw[->] (A1) -- (B1);
\draw[->] (A1) -- (B2);
\draw[->] (A2) -- (B1) node[below,pos=0.1] {\footnotesize $\opi_2^{(1)}$};;
\draw[->] (A2) -- (B4);
\draw[->] (B4) -- (C1) node[below,pos=0.2] {\footnotesize $\opi_2^{(1)}$};
\draw[->] (B1) -- (C1);
\draw[->] (B2) -- (C1);
\end{tikzpicture}
\caption{The $H_3(0) \otimes H_2(0)$-action on $(\DHTMP{\rmst(Q_1)}\otimes\DHTMP{\rmst(P \setminus Q_1)}) \oplus (\DHTMP{\rmst(Q_2)}\otimes\DHTMP{\rmst(P \setminus Q_2)})$ in \cref{eg: restriction}}
\label{fig: action on tensor for restriction}
\end{figure}
\end{example}

Note that each irreducible $H_n(0)$-module belongs to the category $\mathscr{P}(n)$. 
Let $\mathcal{R}(\mathscr{P}(n))$ denote the $\mathbb{Z}$-span of the isomorphism classes of finite-dimensional right $H_n(0)$-modules in $\mathscr{P}(n)$. 
We denote by $\langle M \rangle$ the isomorphism class corresponding to an $H_n(0)$-module $M$ in $\mathscr{P}(n)$. 
By employing \cref{Lem: short exact sequence MP} iteratively, it follows that $G_0(\mathscr{P}(n))$ is a free $\mathbb{Z}$-module with basis $\{\langle \mathbf{F}^R_{\alpha} \rangle \mid \alpha \models n \}$.
Consider the $\mathbb Z$-linear isomorphism 
\[
\Xi:\bigoplus_{n \ge 0} G_0(\scrP{}{n})\to \bigoplus_{n \ge 0} G_0(\modR), \quad \langle M \rangle \mapsto [M].
\]
Let $M$ be a right $H_m(0)$-module in $\mathscr{P}(m)$, and let $N$ be a right $H_n(0)$-module in $\mathscr{P}(n)$.
Due to \cref{prop: DHT proposition} and \cref{Lem: restriction}
we can define
\begin{align*}
\langle M \rangle \boxtimes \langle N \rangle := 
\left \langle
M \otimes N \uparrow_{H_m(0) \otimes H_n(0)}^{H_{m+n}(0)}
\right \rangle
\quad \text{and} \quad
\Delta(\langle M \rangle) := \sum_{0 \le k \le m} 
\left \langle
M \downarrow_{H_k(0) \otimes H_{m-k}(0)}^{H_{m}(0)}
\right \rangle. 
\end{align*}
It is obvious that 
$\Xi(\langle M \rangle \boxtimes \langle N \rangle)
=[M] \boxtimes [ N ]$ and 
$ \Xi(\Delta(\langle M \rangle))
=\Delta([ M ])$. 
It tells us that we can transport the Hopf algebra structure of 
$\bigoplus_{n \ge 0} G_0(\modR)$ to $\bigoplus_{n \ge 0} G_0(\scrP{}{n})$ via $\Xi$.
For the Hopf algebra structure obtained in this manner, it is evident that $\Xi$ is a Hopf algebra isomorphism.

Now, we are ready to state the main theorem of this subsection.

\begin{theorem}\label{Hopf algebra structure of Grothendieck group}
$\bigoplus_{n \ge 0} G_0(\scrP{}{n})$ is isomorphic to $\bigoplus_{n \ge 0} G_0(\modR)$ as a Hopf algebra,
and therefore it is isomorphic to $\Qsym$ as a Hopf algebra. 
\end{theorem}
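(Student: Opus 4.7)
The plan is to show that essentially all the work has already been carried out in the preceding paragraphs of \cref{Sec: The Hopf algebra structure}, and the theorem follows by assembling those pieces. The only remaining task is to verify that the $\Z$-linear isomorphism
\[
\Xi:\bigoplus_{n \ge 0} G_0(\scrP{}{n})\to \bigoplus_{n \ge 0} G_0(\modR), \quad \langle M \rangle \mapsto [M]
\]
intertwines the candidate product $\boxtimes$ and coproduct $\Delta$ on the left with the honest Hopf algebra operations on the right; then the Hopf algebra structure transports through $\Xi$ by definition, and composing with the Bergeron--Li isomorphism $\bigoplus_{n \ge 0} G_0(\modR)\cong \Qsym$ yields the second assertion.

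First I would record that $\Xi$ is well defined and bijective. The key input here is \cref{Lem: short exact sequence MP}: applied inductively to any poset $P \in \poset{n}$, it writes $[\ourMP{P}]$ in $G_0(\modR)$ as a $\Z$-linear combination of classes of irreducible $H_n(0)$-modules, which themselves lie in $\scrP{}{n}$. Hence the $\Z$-span of $\{\langle \ourMP{P}\rangle\mid P\in \poset{n}\}$ in $G_0(\scrP{}{n})$ coincides with the free $\Z$-module on $\{\langle \simFR{\alpha}\rangle\mid \alpha\models n\}$, and $\Xi$ restricts to a bijection of these bases with $\{[\simFR{\alpha}]\mid\alpha\models n\}$.

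Second, I would verify that $\boxtimes$ and $\Delta$ are well defined on $\bigoplus_{n \ge 0} G_0(\scrP{}{n})$. For the product, \cref{prop: DHT proposition}(b) gives $\ourMP{P_1}\otimes \ourMP{P_2}\uparrow \cong \ourMP{P_1\sqcup P_2}\in \scrP{}{m+n}$, so the induction product of two objects of $\scrP{}{m}\times \scrP{}{n}$ lies in $\scrP{}{m+n}$. For the coproduct, \cref{Lem: restriction} shows that the restriction of a poset module decomposes as a direct sum of tensor products of poset modules, so $\Delta(\langle \ourMP{P}\rangle)$ is defined in $\bigoplus_{k} G_0(\scrP{}{k}\otimes \scrP{}{m-k})$. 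By the tautological identities $\Xi(\langle M\rangle\boxtimes\langle N\rangle)=[M]\boxtimes [N]$ and $\Xi(\Delta\langle M\rangle)=\Delta[M]$ already noted above, $\Xi$ is an algebra and coalgebra map.

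Finally, I would transport the Hopf algebra axioms (associativity, coassociativity, unit, counit, compatibility, and antipode) from $\bigoplus_{n \ge 0} G_0(\modR)$ to $\bigoplus_{n \ge 0} G_0(\scrP{}{n})$ through $\Xi$; each axiom on the left is the image under $\Xi^{-1}\otimes\Xi^{-1}$ or its analogues of the corresponding axiom on the right, so no new verification is needed. Consequently $\Xi$ is a Hopf algebra isomorphism, and composing with $\ch_R$ from \cref{quasi characteristic right} gives a Hopf algebra isomorphism to $\Qsym$. The only genuine obstacle in this plan is the well-definedness of $\Delta$, which is exactly the content of \cref{Lem: restriction}; everything else is formal transport.
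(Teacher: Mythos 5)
Your proposal is correct and follows essentially the same route as the paper: establish that $\Xi$ is a $\Z$-linear bijection via \cref{Lem: short exact sequence MP}, check that $\boxtimes$ and $\Delta$ stay inside $\bigoplus_n G_0(\scrP{}{n})$ via \cref{prop: DHT proposition} and \cref{Lem: restriction}, observe that $\Xi$ tautologically intertwines these with the operations on $\bigoplus_n G_0(\modR)$, and transport the Hopf algebra structure through $\Xi$ before composing with $\ch_R$. The paper's one-line proof is simply the conclusion of exactly this chain, which it sets up in the paragraphs immediately preceding the theorem.
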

\begin{proof}
The assertion can be derived by the fact that $\Xi$ and $\ch_R$ are Hopf algebra isomorphisms.
\end{proof}

\subsection{(Anti-)involution twists}
\label{Sec: automorphism twists}

In this subsection, we describe the (anti-)involution twists of poset modules and elucidate how (anti-)involution twists interact with the induction product and restriction coproduct.

Let us introduce the definitions of (anti-)involution twists.
Given an automorphism $\mu$ of $H_n(0)$ and a right $H_n(0)$-module $M$, we define $\mu[M]$ by the right $H_n(0)$-module with the same underlying space as $M$ and with the action $\cdot_\mu$ defined by
\[
v \cdot_\mu h :=  v \cdot \mu(h)  \quad \text{for $h \in H_n(0)$ and $v \in M$.}
\]
We define $\mathbf{T}^+_\mu : \modR \ra \modR$ to be the covariant functor, called the \emph{$\mu$-twist}, sending a right $H_n(0)$-module $M$ to $\mu[M]$ and an $H_n(0)$-module homomorphism $f:M \ra N$ to $\mathbf{T}^+_\mu(f): \mu[M] \ra \mu[N]$ defined by $\mathbf{T}^+_\mu(f)(v) = f(v)$ for $v \in M$.

Similarly, given an anti-automorphism $\nu$ of $H_n(0)$ and a right $H_n(0)$-module $M$, we define $\nu[M]$ by the right $H_n(0)$-module with $M^*$, the dual space of $M$, as the underlying space and with the action $\cdot^\nu$ defined by 
\begin{align}\label{Eq: T_nu^minus}
(\delta \cdot^\nu h)(v) := \delta(v \cdot \nu(h))
\quad \text{for $h \in H_n(0)$, $\delta \in M^*$, and $v \in M$.}
\end{align}
We define $\mathbf{T}^-_\nu: \modR \ra \modR$ to be the contravariant functor, called the \emph{$\nu$-twist}, sending an $H_n(0)$-module $M$ to $\nu[M]$ and an $H_n(0)$-module homomorphism $f:M \ra N$ to 
$\mathbf{T}^-_\nu(f): \nu[N] \ra \nu[M]$ defined by $\mathbf{T}^-_\nu(f)(\delta) = \delta \circ f$.

Next, let us introduce (anti-)involutions on $H_n(0)$, on $\poset{n}$ and on $\Qsym$ in our consideration.
As for (anti-)involutions on $H_n(0)$, we consider two involutions $\upphi, \uptheta$ and an anti-involution $\upchi$ on $H_n(0)$ defined by 
\[
\upphi(\opi_i) = \opi_{n-i}, 
\quad 
\uptheta (\opi_i) = -\pi_i, 
\quad \text{and} \quad 
\upchi(\opi_i) = \opi_i \quad (1 \le i \le n-1).
\]
These (anti-)involutions were introduced by Fayers \cite[Proposition 3.2]{05Fayers} and commute with each other.
As for involutions on $\poset{n}$, we consider 
\begin{align}\label{poset involutions}
& {}^-: \poset{n} \ra  \poset{n}, \  P \mapsto \overline{P}
\quad \text{and} \quad {}^*: \poset{n} \ra  \poset{n}, \ P \mapsto P^*,
\end{align}
where $\overline{P}$ and $P^*$ are posets in $\poset{n}$ whose orders are defined by
\begin{align*}
&u \preceq_{\overline{P}} v
 \ \Longleftrightarrow \ 
n + 1 - u \preceq_{P} n + 1 - v
\quad \text{and} \quad
u \preceq_{P^*} v
 \ \Longleftrightarrow \  
v \preceq_{P} u.
\end{align*}
These involutions also commute with each other.
As for involutions on $\Qsym$, we consider $\rho$ and $\psi$ defined by
$\rho(F_\alpha) = F_{\alpha^\rev}$ and $\psi(F_\alpha) = F_{\alpha^\rmc}$,
where $\alpha^\rev$ is the reverse composition of $\alpha$ and $\alpha^\rmc$ is the complement of $\alpha$.

Finally, for $P \in \poset{n}$, we introduce an $H_n(0)$-module $\ourMP{P}$ which is slightly different from $\DHTMP{P}$.
It is the right $H_n(0)$-module whose underlying space is  $\C\SGR{P}$ and whose $H_n(0)$-action is given by
\begin{align*}
\gamma \cdot \pi_i:= 
\begin{cases}
\gamma & \text{if $i \in \Des{R}{\gamma}$},\\
0 & \text{if $i \notin \Des{R}{\gamma}$ and $\gamma s_i \notin \SGR{P}$},\\
\gamma s_i & \text{if $i \notin \Des{R}{\gamma}$  and $\gamma s_i \in \SGR{P}$}
\end{cases}
\end{align*}
for $i \in [n-1]$ and $\gamma \in \SGR{P}$.

With the above preparation, the main result of this subsection can be stated as follows.

\begin{theorem}\label{thm: auto-twists}
For $P \in \poset{n}$, we have the following.
\begin{enumerate}[label = {\rm (\alph*)}]
\item As $H_n(0)$-modules, $\upphi[\DHTMP{P}] \cong   \DHTMP{\overline{P}^*}$, $\uptheta[\DHTMP{P}] \cong  \ourMP{P}$, and $\upchi[\DHTMP{P}] \cong \ourMP{\overline{P}}$.
\smallskip 

\item $\ch ([\upphi[\DHTMP{P}]]) = \rho (K_{P})$, $\ch ([\uptheta [\DHTMP{P}]]) = \psi (K_{P})$, and $\ch ([\upchi[\DHTMP{P}]]) = K_{P}$.
\end{enumerate}
\end{theorem}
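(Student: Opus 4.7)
The plan is to construct an explicit $H_n(0)$-module isomorphism for each of the three (anti-)involutions in (a), and then to derive (b) at once from (a) combined with the general transformation identities
\[
\ch\circ\mathbf{T}^+_{\upphi}=\rho\circ\ch,\qquad
\ch\circ\mathbf{T}^+_{\uptheta}=\psi\circ\ch,\qquad
\ch\circ\mathbf{T}^-_{\upchi}=\ch
\]
on $\calG(\modRb)$ and \cref{prop: DHT proposition}. These identities will be verified on the basis $\{[\bfF^R_\alpha]\}$ of irreducibles: a direct computation of the twisted action on the one-dimensional $\bfF^R_\alpha$ yields $\upphi[\bfF^R_\alpha]\cong\bfF^R_{\alpha^{\rev}}$, $\uptheta[\bfF^R_\alpha]\cong\bfF^R_{\alpha^{\rmc}}$, and $\upchi[\bfF^R_\alpha]\cong\bfF^R_\alpha$, matching $\rho$, $\psi$, and the identity on $\Qsym$ under $\ch$.

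For the $\uptheta$-twist in (a), using $\uptheta(\opi_i)=-\pi_i$ and $\pi_i=1+\opi_i$, I expand $\gamma\cdot_{\uptheta}\opi_i=-\gamma-\gamma\cdot\opi_i$ via the three-case action on $\DHTMP{P}$; the result matches the $\ourMP{P}$-action in every case except $i\notin\Des{R}{\gamma}$, $\gamma s_i\in\SGR{P}$, where the signs disagree. This is precisely repaired by the map $\gamma\mapsto(-1)^{\ell(\gamma)}\gamma$, giving the desired isomorphism $\uptheta[\DHTMP{P}]\to\ourMP{P}$. For the $\upphi$-twist, since $\upphi(\opi_i)=\opi_{n-i}$, the natural candidate is $\gamma\mapsto w_0\gamma w_0$: I verify that if $a_1 a_2\cdots a_n$ is a linear extension of $P$, then $(n+1-a_n)\cdots(n+1-a_1)$ is one of $\overline{P}^*$, so two-sided $w_0$-conjugation is a bijection $\SGR{P}\to\SGR{\overline{P}^*}$. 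Combined with $w_0 s_i w_0=s_{n-i}$, which yields $\ell(w_0\gamma w_0)=\ell(\gamma)$ and the equivalence $i\in\Des{R}{w_0\gamma w_0}\iff n-i\in\Des{R}{\gamma}$, the three-case intertwining is then immediate.

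For the $\upchi$-twist, I first unpack the contravariant action on the dual basis $\{\gamma^*:\gamma\in\SGR{P}\}$ of $\DHTMP{P}^*$: transposing the action gives $\gamma^*\cdot^{\upchi}\opi_i=0$ when $i\notin\Des{R}{\gamma}$, $\gamma^*\cdot^{\upchi}\opi_i=-\gamma^*$ when $i\in\Des{R}{\gamma}$ and $\gamma s_i\notin\SGR{P}$, and $\gamma^*\cdot^{\upchi}\opi_i=-\gamma^*+(\gamma s_i)^*$ when $i\in\Des{R}{\gamma}$ and $\gamma s_i\in\SGR{P}$. Then I rewrite the $\ourMP{\overline{P}}$-action in terms of $\opi_i=\pi_i-1$ on its basis $\{w_0\gamma:\gamma\in\SGR{P}\}$, using the bijection $\SGR{P}\to\SGR{\overline{P}}$ induced by the relabeling $i\mapsto n+1-i$ and the descent-complement identity $\Des{R}{w_0\gamma}=[n-1]\setminus\Des{R}{\gamma}$. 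Under these, $\gamma^*\mapsto w_0\gamma$ visibly intertwines the two actions, completing (a) and hence (b). The main obstacle I anticipate is the $\upphi$-case, which requires juggling three symmetries at once: the reflection $i\leftrightarrow n-i$ on generators of $H_n(0)$, the composite poset involution $P\mapsto\overline{P}^*$ built from the two commuting operations of relabeling and order-reversal, and the effect of two-sided $w_0$-conjugation on both descents and the poset-constrained sets $\SGR{P}$; organizing these through the single identity $w_0 s_i w_0=s_{n-i}$ and the explicit description of linear extensions of $\overline{P}^*$ reduces the compatibility check to a routine case analysis.
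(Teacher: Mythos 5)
Your proof is correct and takes essentially the same route as the paper: both define the explicit bijections $\gamma\mapsto w_0\gamma w_0$, $\gamma\mapsto(-1)^{\ell(\gamma)}\gamma$, and (in your case) $\gamma^*\mapsto w_0\gamma$, and then check the three cases of the twisted $\opi_i$-action against the target module's action, using $w_0 s_i w_0 = s_{n-i}$, $\Des{R}{w_0\gamma}=[n-1]\setminus\Des{R}{\gamma}$, and $\Des{R}{w_0\gamma w_0}=\{\,i : n-i\in\Des{R}{\gamma}\,\}$. For (b) your verification of $\upphi[\bfF^R_\alpha]\cong\bfF^R_{\alpha^\rev}$, $\uptheta[\bfF^R_\alpha]\cong\bfF^R_{\alpha^\rmc}$, $\upchi[\bfF^R_\alpha]\cong\bfF^R_\alpha$ on irreducibles, together with exactness of the twist functors and \cref{prop: DHT proposition}, is exactly the right argument; the paper just cites \cite[Table 2]{22JKLO} for the same identities. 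Note that (b) does not actually require (a) once you have those identities on $G_0$, since $\ch([\upphi[\DHTMP{P}]])=\rho(\ch([\DHTMP{P}]))=\rho(K_P)$ directly.

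One substantive point in your favor: the paper writes $f_3\colon\gamma^*\mapsto\gamma w_0$, but this cannot be the right map. Right-multiplication by $w_0$ reverses the one-line word and sends $\SGR{P}$ onto $\SGR{P^*}$, not $\SGR{\overline{P}}$; it is left-multiplication $\gamma\mapsto w_0\gamma$, i.e.\ the value-wise complementation $i\mapsto n+1-i$, that sends $\SGR{P}$ onto $\SGR{\overline{P}}$. Your formula $\gamma^*\mapsto w_0\gamma$ is the one for which the case analysis (vanishing exactly when $i\notin\Des{R}{\gamma}$, i.e.\ $i\in\Des{R}{w_0\gamma}$, and so on) goes through, and it agrees with the explicit basis listed for $\ourMP{\overline{P}}$ in \cref{fig: H_5(0)-actions for twists}. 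So you have in effect corrected a typo in the paper's proof while reproducing its intent.
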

\begin{proof}
Combining \cref{prop: DHT proposition} with \cref{thm: auto-twists} and \cite[Table 2]{22JKLO}, we immediately obtain the assertion (b).
We only show the assertion (a).

Let us consider the $\C$-linear isomorphisms defined by
\begin{align*}
f_1 &: \upphi[\DHTMP{P}] \ra   \DHTMP{\overline{P}^*}, \quad \gamma \mapsto w_0 \gamma w_0, \\
f_2 &: \uptheta[\DHTMP{P}] \ra   \ourMP{P}, \quad \gamma \mapsto (-1)^{\ell(\gamma)} \gamma,\\
f_3 &: \upchi[\DHTMP{P}] \ra   \ourMP{\overline{P}}, \quad \gamma^* \mapsto \gamma w_0,
\end{align*}
where $\gamma \in \SGR{P}$ and $\gamma^*$ denotes the dual of $\gamma$ with respect to the basis $\SGR{P}$ for $\DHTMP{P}$.
In a similar manner with \cite[Theorem 3.16]{22JKLO}, one can see that $f_1$, $f_2$, and $f_3$ are $H_n(0)$-module isomorphisms.
Thus, we omit the remaining part of the proof.
\end{proof}

\begin{example}
Let $P$ be the poset in \cref{Ex: poset-51342}.
Then
\[
\begin{tikzpicture}[baseline=3.5mm]
\def \hp {0.4}
\def \vp {0.5}
\def \ccc {1mm}
\node[left] at (\hp*-0.7, \vp*1.0) {$\overline{P}^* = $};
\node[shape=circle,draw,minimum size=\ccc*3, inner sep=0pt] at (0, 0) (A3) {\tiny $3$};
\node[shape=circle,draw,minimum size=\ccc*3, inner sep=0pt] at (\hp*1, \vp) (A5) {\tiny $5$};
\node[shape=circle,draw,minimum size=\ccc*3, inner sep=0pt] at (2*\hp, \vp*0) (A2) {\tiny $2$};
\node[shape=circle,draw,minimum size=\ccc*3, inner sep=0pt] at (2*\hp, \vp*2) (A1) {\tiny $1$};
\node[shape=circle,draw,minimum size=\ccc*3, inner sep=0pt] at (3*\hp, \vp*1) (A4) {\tiny $4$};

\draw[line width = \lw] (A5) -- (A1);
\draw (A3)-- (A5) -- (A2) -- (A4);
\end{tikzpicture}
\quad \text{and} \quad 
\begin{tikzpicture}[baseline=3.5mm]
\def \hp {0.4}
\def \vp {0.5}
\def \ccc {1mm}
\node[left] at (\hp*-0.7, \vp*1.0) {$\overline{P} = $};
\node[shape=circle,draw,minimum size=\ccc*3, inner sep=0pt] at (0, 0) (A1) {\tiny $1$};
\node[shape=circle,draw,minimum size=\ccc*3, inner sep=0pt] at (\hp*1, \vp) (A5) {\tiny $5$};
\node[shape=circle,draw,minimum size=\ccc*3, inner sep=0pt] at (0*\hp, \vp*2) (A3) {\tiny $3$};
\node[shape=circle,draw,minimum size=\ccc*3, inner sep=0pt] at (2*\hp, \vp*2) (A2) {\tiny $2$};
\node[shape=circle,draw,minimum size=\ccc*3, inner sep=0pt] at (3*\hp, \vp*1) (A4) {\tiny $4$};

\draw[line width = \lw] (A3) -- (A5) -- (A2) -- (A4);
\draw (A5) -- (A1);
\end{tikzpicture} \ .
\]
\cref{fig: H_5(0)-actions for twists} illustrates
the $H_5(0)$-actions on $\DHTMP{\overline{P}^*}(\cong \upphi[\DHTMP{P}])$, $\ourMP{P}(\cong \uptheta[\DHTMP{P}])$, and $\ourMP{\overline{P}}(\cong \upchi[\DHTMP{P}])$.
In this figure, the symbol
\begin{tikzpicture}[baseline=-1mm]
\def \hp {3em}
\def \vp {4em}
\node at (\hp*0, \vp*0) {} edge [out=40,in=320, loop] ();
\node at (\hp*0 + 0.7*\hp, \vp*0) {\tiny $\cdot \pi_i$};
\end{tikzpicture}
represents that $\pi_i$ acts as $\id$.
And, for each basis element, we do not display the $\pi_i$-action on it if $\pi_i$ acts as $0$.
\end{example}

\begin{figure}[ht]
\centering
\scalebox{0.7}{$
\begin{tikzpicture}
\def \hp {4.5em}
\def \vp {5em}
\node at (\hp*0, \vp*0) (A1) {$23514$};
\node[right,xshift=\hp*0.15] at (A1) {} edge [out=40,in=320, loop] ();
\node[xshift=\hp*0.8] at (A1) {\scriptsize $\cdot-\opi_3$};

\node at (\hp*2, \vp*0) (A2) {$23451$};
\node[right,xshift=\hp*0.15] at (A2) {} edge [out=40,in=320, loop] ();
\node[xshift=\hp*0.8] at (A2) {\scriptsize $\cdot-\opi_4$};

\node at (\hp*-1, \vp*-1) (B1) {$23541$};
\node[right,xshift=\hp*0.15] at (B1) {} edge [out=40,in=320, loop] ();
\node[xshift=\hp*0.8] at (B1) {\scriptsize 
$\begin{array}{l} 
\cdot-\opi_3, \\
\cdot-\opi_4
\end{array}$};

\node at (\hp*1, \vp*-1) (B2) {$32514$};
\node[right,xshift=\hp*0.15] at (B2) {} edge [out=40,in=320, loop] ();
\node[xshift=\hp*0.8] at (B2) {\scriptsize 
$\begin{array}{l} 
\cdot-\opi_1, \\
\cdot-\opi_3
\end{array}$};

\node at (\hp*2.5, \vp*-1) (B3) {$24351$};
\node[right,xshift=\hp*0.15] at (B3) {} edge [out=40,in=320, loop] ();
\node[xshift=\hp*0.8] at (B3) {\scriptsize 
$\begin{array}{l} 
\cdot-\opi_2, \\
\cdot-\opi_4
\end{array}$};

\node at (\hp*4, \vp*-1) (B4) {$32451$};
\node[right,xshift=\hp*0.15] at (B4) {} edge [out=40,in=320, loop] ();
\node[xshift=\hp*0.8] at (B4) {\scriptsize 
$\begin{array}{l}
\cdot-\opi_1, \\
\cdot-\opi_4
\end{array}$};

\node at (\hp*0, \vp*-2) (C1) {$32541$};
\node[right,xshift=\hp*0.15] at (C1) {} edge [out=40,in=320, loop] ();
\node[xshift=\hp*0.8] at (C1) {\scriptsize 
$\begin{array}{l}
\cdot-\opi_1,\\
\cdot-\opi_3, \\
\cdot-\opi_4
\end{array}$
};

\draw[->] (A1) -- (B1);
\node at (\hp*-0.6, \vp*-0.4) {\scriptsize  $\cdot\opi_4$};
\draw[->] (A1) -- (B2);
\node at (\hp*0.6, \vp*-0.4) {\scriptsize  $\cdot\opi_1$};
\draw[->] (A2) -- (B1);
\node at (\hp*2.35, \vp*-0.4) {\scriptsize  $\cdot\opi_2$};
\draw[->] (A2) -- (B3);
\node at (\hp*3, \vp*-0.4) {\scriptsize  $\cdot\opi_1$};
\draw[->] (A2) -- (B4);
\node at (\hp*1, \vp*-0.225) {\scriptsize  $\cdot\opi_3$};
\draw[->] (B1) -- (C1);
\node at (\hp*-0.6, \vp*-1.6) {\scriptsize  $\cdot\opi_1$};
\draw[->] (B2) -- (C1);
\node at (\hp*0.6, \vp*-1.6) {\scriptsize  $\cdot\opi_4$};
\draw[->] (B4) -- (C1);
\node at (\hp*2.1, \vp*-1.6) {\scriptsize  $\cdot\opi_3$};

\node at (\hp*2,\vp*-2.6) {$\DHTMP{\overline{P}^*} (\cong \upphi[\DHTMP{P}])$};
\end{tikzpicture}
\
\begin{tikzpicture}
\def \hp {4.5em}
\def \vp {5em}
\node at (\hp*0, \vp*0) (A1) {$25134$};
\node[right,xshift=\hp*0.15] at (A1) {} edge [out=40,in=320, loop] ();
\node[xshift=\hp*0.7] at (A1) {\scriptsize $\cdot \pi_2$};

\node at (\hp*2, \vp*0) (A2) {$51234$};
\node[right,xshift=\hp*0.15] at (A2) {} edge [out=40,in=320, loop] ();
\node[xshift=\hp*0.7] at (A2) {\scriptsize $\cdot \pi_1$};

\node at (\hp*-1, \vp*-1) (B1) {$52134$};
\node[right,xshift=\hp*0.15] at (B1) {} edge [out=40,in=320, loop] ();
\node[xshift=\hp*0.7] at (B1) {\scriptsize 
$\begin{array}{l} 
\cdot \pi_1, \\
\cdot \pi_2
\end{array}$};

\node at (\hp*1, \vp*-1) (B2) {$25143$};
\node[right,xshift=\hp*0.15] at (B2) {} edge [out=40,in=320, loop] ();
\node[xshift=\hp*0.7] at (B2) {\scriptsize 
$\begin{array}{l} 
\cdot \pi_2, \\
\cdot \pi_4
\end{array}$};

\node at (\hp*2.5, \vp*-1) (B3) {$51324$};
\node[right,xshift=\hp*0.15] at (B3) {} edge [out=40,in=320, loop] ();
\node[xshift=\hp*0.7] at (B3) {\scriptsize 
$\begin{array}{l} 
\cdot \pi_1, \\
\cdot \pi_3
\end{array}$};

\node at (\hp*4, \vp*-1) (B4) {$51243$};
\node[right,xshift=\hp*0.15] at (B4) {} edge [out=40,in=320, loop] ();
\node[xshift=\hp*0.7] at (B4) {\scriptsize 
$\begin{array}{l}
\cdot \pi_1, \\
\cdot \pi_4
\end{array}$};

\node at (\hp*0, \vp*-2) (C1) {$52143$};
\node[right,xshift=\hp*0.15] at (C1) {} edge [out=40,in=320, loop] ();
\node[xshift=\hp*0.7] at (C1) {\scriptsize 
$\begin{array}{l}
\cdot \pi_1,\\
\cdot \pi_2, \\
\cdot \pi_4
\end{array}$
};

\draw[->] (A1) -- (B1);
\node at (\hp*-0.6, \vp*-0.4) {\scriptsize  $\cdot \pi_1$};
\draw[->] (A1) -- (B2);
\node at (\hp*0.6, \vp*-0.4) {\scriptsize  $\cdot \pi_4$};
\draw[->] (A2) -- (B1);
\node at (\hp*2.35, \vp*-0.4) {\scriptsize  $\cdot \pi_3$};
\draw[->] (A2) -- (B3);
\node at (\hp*3, \vp*-0.4) {\scriptsize  $\cdot \pi_4$};
\draw[->] (A2) -- (B4);
\node at (\hp*1, \vp*-0.225) {\scriptsize  $\cdot \pi_2$};
\draw[->] (B1) -- (C1);
\node at (\hp*-0.6, \vp*-1.6) {\scriptsize  $\cdot \pi_4$};
\draw[->] (B2) -- (C1);
\node at (\hp*0.6, \vp*-1.6) {\scriptsize  $\cdot \pi_1$};
\draw[->] (B4) -- (C1);
\node at (\hp*2.1, \vp*-1.6) {\scriptsize  $\cdot \pi_2$};

\node at (\hp*2,\vp*-2.6) {
$\ourMP{P}(\cong \uptheta[\DHTMP{P}])$};
\end{tikzpicture}
$}
\\
\scalebox{0.7}{$
\begin{tikzpicture}
\def \hp {4.5em}
\def \vp {5em}
\node at (\hp*0, \vp*-2) (A1) {$41532$};
\node[right,xshift=\hp*0.15] at (A1) {} edge [out=40,in=320, loop] ();
\node[xshift=\hp*0.7] at (A1) {\scriptsize $\begin{array}{l} 
\cdot \pi_1, \\
\cdot \pi_3, \\
\cdot \pi_4
\end{array}$};

\node at (\hp*2, \vp*-2) (A2) {$15432$};
\node[right,xshift=\hp*0.15] at (A2) {} edge [out=40,in=320, loop] ();
\node[xshift=\hp*0.7] at (A2) {\scriptsize 
$\begin{array}{l} 
\cdot \pi_2, \\
\cdot \pi_3, \\
\cdot \pi_4
\end{array}$};

\node at (\hp*-1, \vp*-1) (B1) {$14532$};
\node[right,xshift=\hp*0.15] at (B1) {} edge [out=40,in=320, loop] ();
\node[xshift=\hp*0.7] at (B1) {\scriptsize 
$\begin{array}{l} 
\cdot \pi_3, \\
\cdot \pi_4
\end{array}$};

\node at (\hp*1, \vp*-1) (B2) {$41523$};
\node[right,xshift=\hp*0.15] at (B2) {} edge [out=40,in=320, loop] ();
\node[xshift=\hp*0.7] at (B2) {\scriptsize 
$\begin{array}{l} 
\cdot \pi_1, \\
\cdot \pi_3
\end{array}$};

\node at (\hp*2.5, \vp*-1) (B3) {$15342$};
\node[right,xshift=\hp*0.15] at (B3) {} edge [out=40,in=320, loop] ();
\node[xshift=\hp*0.7] at (B3) {\scriptsize 
$\begin{array}{l} 
\cdot \pi_2, \\
\cdot \pi_4
\end{array}$};

\node at (\hp*4, \vp*-1) (B4) {$15423$};
\node[right,xshift=\hp*0.15] at (B4) {} edge [out=40,in=320, loop] ();
\node[xshift=\hp*0.7] at (B4) {\scriptsize 
$\begin{array}{l}
\cdot \pi_2, \\
\cdot \pi_3
\end{array}$};

\node at (\hp*0, \vp*0) (C1) {$14523$};
\node[right,xshift=\hp*0.15] at (C1) {} edge [out=40,in=320, loop] ();
\node[xshift=\hp*0.7] at (C1) {\scriptsize 
$\begin{array}{l}
\cdot \pi_3
\end{array}$
};

\draw[<-] (A1) -- (B1);
\node at (\hp*-0.6, \vp*-1.6) {\scriptsize  $\cdot \pi_1$};
\draw[<-] (A1) -- (B2);
\node at (\hp*0.6, \vp*-1.6) {\scriptsize  $\cdot \pi_4$};
\draw[<-] (A2) -- (B1);
\node at (\hp*1, \vp*-1.6) {\scriptsize  $\cdot \pi_2$};
\draw[<-] (A2) -- (B3);
\node at (\hp*2.05, \vp*-1.6) {\scriptsize  $\cdot \pi_3$};
\draw[<-] (A2) -- (B4);
\node at (\hp*3, \vp*-1.6) {\scriptsize  $\cdot \pi_4$};
\draw[<-] (B1) -- (C1);
\node at (\hp*-0.6, \vp*-0.4) {\scriptsize  $\cdot \pi_4$};
\draw[<-] (B2) -- (C1);
\node at (\hp*0.6, \vp*-0.4) {\scriptsize  $\cdot \pi_1$};
\draw[<-] (B4) -- (C1);
\node at (\hp*2., \vp*-0.4) {\scriptsize  $\cdot \pi_2$};
\node at (\hp*2,\vp*-2.6) {$\ourMP{\overline{P}}(\cong \upchi[\DHTMP{P}])$};
\end{tikzpicture}
$}
\caption{The $H_5(0)$-actions on $\DHTMP{\overline{P}^*}$, $\ourMP{P}$, and $\ourMP{\overline{P}}$}
\label{fig: H_5(0)-actions for twists}
\end{figure}

Let $m, n \in \N_0$.
For any (anti-)automorphism $\zeta: H_{m+n}(0) \ra H_{m+n}(0)$ and an $H_m(0) \otimes H_n(0)$-module $M$, we denote $\zeta|_{H_m(0) \otimes H_n(0)}[M]$ by $\zeta[M]$.
We close this section by providing the following corollary.
\begin{corollary}\label{Coro: Automorphis of P}
{\rm (a)} Let $P_1 \in \poset{m}$ and $P_2 \in \poset{n}$.
As $H_{m+n}(0)$-modules,
\[
\begin{array}{c}
\upphi[M_{P_1} \boxtimes M_{P_2}] \cong M_{\overline{P_2}^*} \boxtimes M_{\overline{P_1}^*}, 
\ \
\uptheta[M_{P_1} \boxtimes M_{P_2}] \cong \ourMP{P_1} \boxtimes \ourMP{P_2}, \ \
\\[1.5ex]
\text{and} \quad
\upchi[M_{P_1} \boxtimes M_{P_2}] \cong \ourMP{\overline{P_2}} \boxtimes \ourMP{\overline{P_1}}.
\end{array}
\]

{\rm (b)} Let $P \in \poset{m+n}$. 
As $H_{m}(0) \otimes H_{n}(0)$-modules,
\[
\begin{array}{c}
\upphi[M_{P}\downarrow^{H_{m+n}(0)}_{H_{n}(0) \otimes H_{m}(0)}] \cong M_{\overline{P}^*}\downarrow^{H_{m+n}(0)}_{H_{m}(0) \otimes H_{n}(0)},
\quad
\uptheta[M_{P}\downarrow^{H_{m+n}(0)}_{H_{m}(0) \otimes H_{n}(0)}] \cong 
\ourMP{P}\downarrow^{H_{m+n}(0)}_{H_{m}(0) \otimes H_{n}(0)},
\\[1.5ex]
\text{and} \quad
\upchi[M_{P}\downarrow^{H_{m+n}(0)}_{H_{m}(0) \otimes H_{n}(0)}] \cong 
\ourMP{\overline{P}} \downarrow^{H_{m+n}(0)}_{H_{m}(0) \otimes H_{n}(0)}.
\end{array}
\]
\end{corollary}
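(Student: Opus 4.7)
The plan is to derive both parts of the corollary from \cref{thm: auto-twists}, \cref{prop: DHT proposition}(b), and \cref{Lem: restriction}, together with elementary identities for how the poset involutions $\overline{\cdot}$ and $\cdot^*$ interact with the disjoint union $\sqcup$ and with the passage to lower subposets, plus the standard compatibilities between (anti-)automorphism twists and induction/restriction.

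For part (a), I would first verify the poset identities
\[
\overline{P_1 \sqcup P_2} = \overline{P_2} \sqcup \overline{P_1}, \qquad (P_1 \sqcup P_2)^* = P_1^* \sqcup P_2^*, \qquad \overline{P_1 \sqcup P_2}^* = \overline{P_2}^* \sqcup \overline{P_1}^*,
\]
which unwind from the definitions: the label-reversal $i \mapsto m+n+1-i$ interchanges the blocks $[m]$ and $[m+1, m+n]$ of $P_1 \sqcup P_2$, while the order-reversal $\cdot^*$ acts blockwise. I would also establish the companion identity $\ourMP{P_1 \sqcup P_2} \cong \ourMP{P_1} \boxtimes \ourMP{P_2}$, which follows either by repeating the argument of \cref{prop: DHT proposition}(b) verbatim with $\opi_i$ replaced by $\pi_i$, or by transporting \cref{prop: DHT proposition}(b) across the $\uptheta$-twist using the general fact that $\uptheta[M \boxtimes N] \cong \uptheta[M] \boxtimes \uptheta[N]$ (valid because $\uptheta$ restricts to $H_m(0) \otimes H_n(0)$ factorwise). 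Combining \cref{prop: DHT proposition}(b), \cref{thm: auto-twists}(a), the three poset identities, and the companion identity then yields all three isomorphisms in (a).

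For part (b), the strategy is parallel: apply \cref{Lem: restriction} to decompose each side as a direct sum of external tensor products of poset modules, apply the twist functor term by term using part (a), and match the summands via explicit bijections of the indexing sets. For the $\upphi$-case the relevant bijection is
\[
\sfLS{P}{n} \longleftrightarrow \sfLS{\overline{P}^*}{m}, \qquad Q' \longmapsto Q := \{m+n+1-j \mid j \in [m+n] \setminus Q'\},
\]
with the compatibilities $\rmst(Q) \cong \overline{\rmst(P \setminus Q')}^*$ and $\rmst(\overline{P}^* \setminus Q) \cong \overline{\rmst(Q')}^*$; the $\uptheta$- and $\upchi$-cases use the identity and the label-reversal, respectively, with analogous compatibilities. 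Each identification is a direct calculation from the definitions of $\rmst$, $\overline{\cdot}$, and $\cdot^*$.

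The principal technical obstacle is the bookkeeping associated with $\upphi$: as an involution reversing the indexing of generators, its restriction to the subalgebra $H_n(0) \otimes H_m(0) \subset H_{m+n}(0)$ is an algebra isomorphism onto the distinct subalgebra $H_m(0) \otimes H_n(0) \subset H_{m+n}(0)$ that exchanges the two tensor factors. This swap is what produces the asymmetric subscripts in the $\upphi$-line of (b) and the reversed order $\overline{P_2}^* \sqcup \overline{P_1}^*$ in (a). Once this is tracked carefully — and not confused with the order reversal arising from the anti-multiplicativity of $\upchi$ — the remaining verifications are routine.
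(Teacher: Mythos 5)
Your proposal is correct, but it takes a genuinely different route from the paper. The paper's proof is a one-liner: it combines \cref{thm: auto-twists}(a) with the general compatibility result \cite[Theorem 4.4]{22JO}, which packages the interaction of (anti-)automorphism twists with induction and restriction for $0$-Hecke algebras. You instead rederive those compatibilities from scratch at the poset level: for (a), via the identities $\overline{P_1 \sqcup P_2} = \overline{P_2} \sqcup \overline{P_1}$, $(P_1 \sqcup P_2)^* = P_1^* \sqcup P_2^*$ together with \cref{prop: DHT proposition}(b), \cref{thm: auto-twists}(a), and the companion identity $\ourMP{P_1 \sqcup P_2} \cong \ourMP{P_1} \boxtimes \ourMP{P_2}$; for (b), by expanding both sides with \cref{Lem: restriction} and matching summands through an explicit bijection of lower subposets. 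Your route is self-contained (it avoids citing the external reference), at the cost of being longer; the paper's route is shorter but outsources the bookkeeping. Both are valid.

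Two remarks worth making. First, a small imprecision: in part (b) you say you would "apply the twist functor term by term using part (a)", but the summands produced by \cref{Lem: restriction} are external tensor products $M_{\rmst(Q')} \otimes M_{\rmst(P \setminus Q')}$ over $H_n(0) \otimes H_m(0)$, not induction products over $H_{m+n}(0)$, so part (a) does not apply directly; you should instead invoke \cref{thm: auto-twists}(a) factorwise (combined with the factor-swap coming from $\upphi$ or $\upchi$). Your verification of the bijection $Q' \mapsto \{m+n+1-j \mid j \in [m+n] \setminus Q'\}$ and the compatibilities $\rmst(Q) \cong \overline{\rmst(P\setminus Q')}^*$, $\rmst(\overline{P}^* \setminus Q) \cong \overline{\rmst(Q')}^*$ does check out. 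Second, for part (b) there is a simpler argument than summand-matching that you could substitute: for an (anti-)automorphism $\mu$ of $H_{m+n}(0)$ and the subalgebra $A = H_m(0) \otimes H_n(0)$, the modules $\mu[M]\!\downarrow_{\mu^{-1}(A)}$ and $(\mu|_{\mu^{-1}(A)})[M\!\downarrow_A]$ have the same underlying space and, unwinding the definitions, literally the same action; twisting and restriction therefore commute on the nose, and part (b) follows from \cref{thm: auto-twists}(a) alone without ever opening up \cref{Lem: restriction}.
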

\begin{proof}
Combining \cref{thm: auto-twists}(a) with~\cite[Theorem 4.4]{22JO} yields the assertions (a) and (b).
\end{proof}

\section{Quasisymmetric power sum expansions I: A theoretical method}
\label{Sec: Quasisymmetric power sum expansions I}

\subsection{The bases for \texorpdfstring{$\Qsym$}{Lg} in consideration and our method}
\label{The bases for Qsym in consideration and our method}
We start by introducing the bases for $\Qsym$ that will be covered in this section.
For each $\alpha \models n$, let $\balpha$ be one of the following quasisymmetric functions: 

\begin{enumerate}[label = {$\bullet$}]
\item
the quasisymmetric Schur function $\QS{\alpha}$~(\cite{11HLMW})

\item
the dual immaculate function $\DIF{\alpha}$~(\cite{14BBSSZ})

\item
the extended Schur function $\ESF{\alpha}$~(\cite{19AS})

\item 
the images of $\QS{\alpha}, \DIF{\alpha}, \ESF{\alpha}$ under 
the involutions $\rho$, $\psi$, $\rho \circ \psi$ on $\Qsym$ 
\end{enumerate}
These functions are quite significant in that they were introduced as quasisymmetric analogues of the Schur function and  
$\{\balpha \mid \alpha \models n\}$ is a $\mathbb Z$-basis for $\Qsym_n$.
The purpose of this section is to present a theoretical method to expand $\balpha$ in the quasisymmetric power sum basis $\{\Psi_\alpha: \alpha \models n\}$.

In the past few years, many noteworthy results on quasisymmetric power sums have been obtained (for instance, see \cite{21AS}, \cite{23AWW}, \cite{20BDHMN}, \cite{22A}, \cite{21LW}).
Among them, the present paper heavily depends on the result of Liu--Weselcouch~\cite{21LW}, which tells us how to expand  
the $\pomega$-partition generating function $\KPw$ into quasisymmetric power sums
for an arbitrary finite labeled poset $(P, \omega)$.
To be precise, our method can be performed in the following steps.

\begin{roadmap} 
Let $\alpha$ be a composition of $n$. 
\begin{enumerate}[label = {\it Step \arabic*.}]
\item
We observe that    
$\balpha$ appears as the image of some left $H_n(0)$-module, say $\mbalpha$, under the left quasisymmetric characteristic.
In fact, $\mbalpha$ is constructed using the appropriate tableaux.  

\item
We observe that every indecomposable summand of $\mbalpha$ is equipped with the structure of a left weak Bruhat interval module introduced in \cite{22JKLO}.

\item
Modifying the construction in \cite{22JKLO}, we introduce right weak Bruhat interval modules and show that these are all poset modules.

\item
We construct a contravariant functor 
from the category of left $H_n(0)$-modules to that of right $H_n(0)$-modules
\[
\calF_n: \Rmod \ra \modR
\]
that preserves the quasisymmetric characteristic and sends a left weak Bruhat interval module to a right weak Bruhat interval module. 

\item
From the result of Duchamp--Hivert--Thibon~\cite{02DHT} 
we derive that $\balpha$ appears as 
the $P$-partition generating function of a poset $P\in \poset{n}$.  
Then, apply the result of Liu--Weselcouch~\cite{21LW} to expand 
$Y_\alpha$ in the quasisymmetric power sum basis.
\end{enumerate}
\end{roadmap}

\subsection{\texorpdfstring{{\it Step 1 $\&$ Step 2}}{Lg}}
\label{Subsec: Step 1 an Step 2}
For a detailed description of {\it Step 1}, we first introduce the left irreducible modules (up to isomorphism), the
Grothendieck group of $\Rmod$ (=the category of left $H_n(0)$-modules), and 
the left quasisymmetric characteristic.

For $\alpha \models n$, the left irreducible $H_n(0)$-module, denoted by  $\simFL{\alpha}$, 
corresponding to $\alpha$ is defined to be $\C v_{\alpha}$ endowed with the following left $H_n(0)$-action: for $1 \le i \le n-1$, 
\[
\pi_i \cdot v_\alpha = \begin{cases}
0 & i \in \set(\alpha),\\
v_\alpha & i \notin \set(\alpha).
\end{cases}
\]

Let $\calR_L(\Rmod)$ denote the $\Z$-span of the isomorphism classes of finite dimensional left $H_n(0)$-modules. 
The \emph{Grothendieck group} $G_0(\Rmod)$ can be defined in the same way as $G_0(\Rmod)$ and 
the nonisomorphic irreducible $H_n(0)$-modules induce a free $\Z$-basis for $G_0(\Rmod)$.
Let
\[
\calG(\Rbmod) := \bigoplus_{n \ge 0} G_0(\Rmod).
\]
Then the map 
\begin{equation}\label{quasi characteristic left}
\ch_L : \calG(\Rbmod) \ra \Qsym, \quad [\bfF^L_{\alpha}] \mapsto F_{\alpha},
\end{equation}
is also a Hopf algebra isomorphism.
From now on, for clarity, we will refer to $\ch_L$ as the \emph{left quasisymmetric characteristic}
and $\ch_R$ in \cref{quasi characteristic right} as the \emph{right quasisymmetric characteristic}.

For all of $Y_\alpha$'s introduced in the preceding subsection, 
certain left $H_n(0)$-modules $\mbalpha$ with $\ch_L(\mbalpha)=Y_\alpha$ have been constructed 
using appropriate tableaux. The complete list can be shown in \cref{Table: our consideration}.

\begin{table}[t]	
\centering
\tabulinesep=1.2mm
\small
\begin{tabu}{c|c|c|c}
$\balpha$ &  $\mbalpha$ & $\begin{array}{c}
\text{Decomposition} \\ \text{into indecomposables}
\end{array}$
& Basis of $\mathbf{Y}_\alpha$
\\ \hline \hline
$\displaystyle \QS{\alpha}$ (\cite{11HLMW})
& $\mQS{\alpha}$ (\cite{15TW}) 
& $\mQS{\alpha} = \bigoplus_E \mQS{\alpha,E}$ (\cite{15TW,19Konig})
&
$\substack{\text{standard reverse} \\ \text{composition tableaux}}$ 
\\ \hline
$\YQS{\alpha} = \rho(\QS{\alpha^\rmr})$ (\cite{13LMvW})
&  
$\mYQS{\alpha} 
\cong \upphi[\mQS{\alpha^\rmr}]$ (\cite{22CKNO1})
& $\mYQS{\alpha} = \bigoplus_E \mYQS{\alpha,E}$ 
(\cite{22CKNO1})
&
$\substack{\text{standard Young} \\ \text{composition tableaux}}$ 
\\ \hline
$\YRQS{\alpha} = \psi(\QS{\alpha})$ (\cite{14MR})
&  
$\mYRQS{\alpha} := \uptheta \circ \upchi[\mQS{\alpha}]$ (\cite{22JKLO})
& $\mYRQS{\alpha} = \bigoplus_E \mYRQS{\alpha,E}$ (\cite{22JKLO})
&
$\substack{\text{standard reverse} \\ \text{composition tableaux}}$ 
\\ \hline
$\RQS{\alpha} =  \rho \circ \psi(\QS{\alpha^\rmr})$ (\cite{15MN})   
& $\mRQS{\alpha} \cong \upphi \circ \uptheta \circ \upchi [\mQS{\alpha^\rmr}]$  (\cite{22BS})
& $\mRQS{\alpha} = \bigoplus_E \mRQS{\alpha,E}$ 
(\cite{22JKLO})
&
$\substack{\text{standard Young} \\ \text{row-strict tableaux}}$ 
\\ \tabucline[1.1pt]{-}
$\DIF{\alpha}$ (\cite{14BBSSZ})
& $\mDIF{\alpha}$ (\cite{15BBSSZ})
& $\mDIF{\alpha}$ (\cite{15BBSSZ})
& $\substack{\text{standard dual} \\ \text{immaculate tableaux}}$ 
\\ \hline
$\RDIF{\alpha} = \psi(\DIF{\alpha})$ (\cite{22NSvWVW})
&  $\mRDIF{\alpha} \cong \uptheta \circ \upchi[\mDIF{\alpha}]$ (\cite{22NSvWVW})
& $\mRDIF{\alpha}$ (\cite{22NSvWVW})
& $\substack{\text{standard dual} \\ \text{ immaculate tableaux}}$ 
\\ \tabucline[1.1pt]{-}
$\ESF{\alpha}$ (\cite{19AS})
& $\mESF{\alpha}$ (\cite{19Searles})
& $\mESF{\alpha}$ (\cite{19Searles})
& $\substack{\text{standard} \\ \text{extended tableaux}}$ 
\\ \hline
$\calR\calE_\alpha = \psi(\calE_\alpha)$ (\cite{22NSvWVW})
& $\mRESF{\alpha} \cong \uptheta \circ \upchi[\mESF{\alpha}]$ (\cite{22NSvWVW})
& $\mRESF{\alpha}$ (\cite{22NSvWVW})
& $\substack{\text{standard} \\ \text{extended tableaux}}$ 
\end{tabu}
\caption{The $0$-Hecke module $\mbalpha$ corresponding to $\balpha$ and its decomposition into indecomposables, where $\ch_L(\mbalpha)=\balpha$.}
\label{Table: our consideration}
\end{table}

Next, let us explain {\it Step 2}. 
In~\cite{22JKLO}, Jung--Kim--Lee--Oh introduced the left weak Bruhat interval modules to deal with $\mbalpha$'s, more precisely, 
the indecomposable direct summands of $\mbalpha$'s 
in \cref{Table: our consideration} in a unified way.
\footnote{
The modules $\calR\mDIF{\alpha}$ and $\calR \mESF{\alpha}$ in \cite{22NSvWVW} 
were constructed by
defining new  $H_n(0)$-actions on the tableau-bases for $\mDIF{\alpha}$ and $\mESF{\alpha}$, respectively.
These modules, however, can be easily obtained 
by taking some involution twists introduced in \cite{22JKLO} on $\mDIF{\alpha}$ and 
$\mESF{\alpha}$.
To be precise, $\calR\mDIF{\alpha}$ and $\calR \mESF{\alpha}$ can be recovered as the $\uptheta \circ \upchi$-twists of $\mDIF{\alpha}$ and $\mESF{\alpha}$, respectively.  
In fact, various involution twists of all left Bruhat interval modules can be found in \cite[Section 3.4]{22JKLO}.}

\begin{definition}{\rm (\cite[Definition 1]{22JKLO})} \label{left WBI}
Let $\sigma, \rho \in \SG_n$ with $\sigma \preceq_L \rho$.
The \emph{left weak Bruhat interval module associated with $[\sigma,\rho]_L$}, denoted by $\sfB_L(\sigma,\rho)$, is the left $H_n(0)$-module with  $\C[\sigma,\rho]_L$ as the underlying space and with the $H_n(0)$-action defined by
\begin{equation*}\label{Hecke algebra action: left and pi}
\pi_i \cdot \gamma  := 
\begin{cases}
\gamma & \text{if $i \in \Des{L}{\gamma}$}, \\
0 & \text{if $i \notin \Des{L}{\gamma}$ and $s_i\gamma \notin [\sigma,\rho]_L$,} \\
s_i \gamma & \text{if $i \notin \Des{L}{\gamma}$ and $s_i\gamma \in [\sigma,\rho]_L$}
\end{cases} 
\end{equation*}
for $1 \leq i \leq n-1$ and $\gamma \in [\sigma,\rho]_L$.
\end{definition}

We denote by $\osfB_L(\sigma,\rho)$ the $\uptheta$-twist of $\sfB_L(\sigma,\rho)$, which is called the negative weak Bruhat interval module associated with $[\sigma,\rho]_L$ in \cite[Definition 1]{22JKLO}.
One sees that it is the left $H_n(0)$-module with  $\C[\sigma,\rho]_L$ as the underlying space and with the $H_n(0)$-action defined by
\begin{equation*}\label{Hecke algebra action: left and pi bar}
\opi_i \star \gamma  := 
\begin{cases}
-\gamma & \text{if $i \in \Des{L}{\gamma}$}, \\
0 & \text{if $i \notin \Des{L}{\gamma}$ and $s_i\gamma \notin [\sigma,\rho]_L$,} \\
s_i \gamma & \text{if $i \notin \Des{L}{\gamma}$ and $s_i\gamma \in [\sigma,\rho]_L$}
\end{cases} 
\end{equation*}
for $1 \leq i \leq n-1$ and $\gamma \in [\sigma,\rho]_L$.

Quite interestingly, every indecomposable direct summand $V$ of $\mbalpha$ appearing 
in \cref{Table: our consideration} turns out to have 
the structure of a left weak Bruhat interval module.
To be precise, Jung--Kim--Lee--Oh provided a left weak Bruhat interval $[\sigma_V, \rho_V]_L \subseteq \SG_n$ satisfying the following conditions:
\begin{enumerate}[label = {\rm (\alph*)}]
\item 
$V$ is isomorphic to $\sfB_L(\sigma_V, \rho_V)$ as a left $H_n(0)$-module.
\item 
$\sigma_V$ is of the form $w_0(\beta)$ for some $\beta \models n$.
\end{enumerate}
For the precise description of $\sigma_V$ and $\rho_V$, see \cref{Lemma: V and X are interval}  and \cref{Lemma: S is interval}.

\begin{remark}\label{correction of WBI paper}
It was stated in the introduction of \cite{22JKLO} that 
the category $\bigoplus_{n \ge 0} \mathcal{B}_n$ is closed under induction product, restriction coproduct, and (anti-)involution twists, where $\mathcal{B}_n$ is the full subcategory of $\Rmod$ whose objects are direct sums of finitely many isomorphic copies of left weak Bruhat interval modules and negative left weak Bruhat interval modules.
However, it is incorrect because the induction product of a left weak Bruhat interval module and a negative left weak Bruhat interval module is not an object in $\bigoplus_{n \ge 0} \mathcal{B}_n$.
In fact, $\mathcal{B}_n$ should appear as the full subcategory of $\Rmod$ whose objects are direct sums of finitely many isomorphic copies of only left weak Bruhat interval modules.
\end{remark}

\subsection{\texorpdfstring{{\it Step 3 $\&$ Step 4}}{Lg}}
\label{Step 3 and Step 4}
The first thing we need to do in step 3 is to introduce the right module version of \cref{left WBI}.  
\begin{definition}
Let $\sigma, \rho \in \SG_n$ with $\sigma \preceq_R \rho$.
The \emph{right weak Bruhat interval module associated with $[\sigma,\rho]_R$}, denoted by $\osfB_R(\sigma,\rho)$, is the right $H_n(0)$-module with $\C[\sigma,\rho]_R$ as the underlying space and with the $H_n(0)$-action defined by
\begin{align*}
\gamma \cdot \opi_i= 
\begin{cases}
-\gamma & \text{if } i \in \Des{R}{\gamma},\\
0 & \text{if $i \notin \Des{R}{\gamma}$ and $\gamma s_i \notin [\sigma,\rho]_R$},\\
\gamma s_i & \text{if } i \notin \Des{R}{\gamma} \text{ and } \gamma s_i \in [\sigma,\rho]_R
\end{cases}
\end{align*}
for $1 \leq i \leq n-1$ and $\gamma \in [\sigma,\rho]_R$.
\end{definition}

We denote by $\sfB_R(\sigma,\rho)$ the $\uptheta$-twist of $\osfB_R(\sigma,\rho)$.
One sees that it is the right $H_n(0)$-module with  $\C[\sigma,\rho]_R$ as the underlying space and with the $H_n(0)$-action defined by
\begin{align*}
\gamma \cdot \pi_i= 
\begin{cases}
\gamma & \text{if } i \in \Des{R}{\gamma},\\
0 & \text{if $i \notin \Des{R}{\gamma}$ and $\gamma s_i \notin [\sigma,\rho]_R$},\\
\gamma s_i & \text{if } i \notin \Des{R}{\gamma} \text{ and } \gamma s_i \in [\sigma,\rho]_R
\end{cases}
\end{align*}
for $1 \leq i \leq n-1$ and $\gamma \in [\sigma,\rho]_R$.

In the following, we will show that every right weak Bruhat interval module is a poset module.
Indeed, this can be justified using Bj\"{o}rner--Wachs's result in~\cite[Section 6]{91BW}, which characterizes the posets 
$P \in \poset{n}$ such that $\SGR{P}$ is a right weak Bruhat interval.
Before stating their result, we introduce the necessary definitions and notations.
Given $P \in \poset{n}$,
we say that $P$ is {\em regular} if there is no triple $(u,v,w)$ such that 
\begin{align}\label{Def: regular condition of P}
\begin{aligned}
& v \preceq_P w, \qquad 
(u,v) , (u,w) \in \inc{P}, 
\quad \text{ and } \quad 
w < u < v, \quad \text{ or}, \\
& v \preceq_P w, \qquad 
(u,v), (u,w) \in \inc{P}, \quad \text{ and } \quad 
v < u < w.
\end{aligned}
\end{align}
Let $\calR_{\rm st}(P) := \{(x,y) \in [n]^2 \mid x \preceq_P y, x \neq y \}$.
The {\em dimension} of $P$ is defined by
\[
\min \left\{ |U| \; \middle| \; U \subseteq \SGR{P} \text{ and } \bigcap_{\sigma \in U} \left( \inv{\sigma} \cup \coinv{\sigma} \right) = \calR_{\rm st}(P) \right\}.
\]

\begin{lemma}{\rm (\cite[Theorem 6.8]{91BW})}\label{Thm: BW equivalent}
Let $U \subseteq \SG_n$ with $|U| > 1$. Then the following are equivalent.
\begin{enumerate}[label = {\rm (\alph*)}]
    \item $U \subseteq \SG_n$ is a right weak Bruhat interval.

    \item $U = \SGR{P}$ for some regular poset $P$.
    
    \item $U = \SGR{P}$ for some two-dimensional regular poset $P$.
\end{enumerate}
\end{lemma}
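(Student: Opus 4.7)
The plan is to prove (c) $\Rightarrow$ (b) $\Rightarrow$ (a) $\Rightarrow$ (c). The first implication is immediate from the definitions, so the work lies in the other two. Throughout, I will use that $\sigma \in \SGR{P}$ is equivalent to $\calR_{\rm st}(P) \subseteq \inv{\sigma} \cup \coinv{\sigma}$, i.e.\ the linear order induced by $\sigma$ extends $P$.

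For (a) $\Rightarrow$ (c), given $U = [\sigma_0,\rho_0]_R$ I would construct a two-dimensional regular poset $P$ realizing $U$ directly by intersection of linear orders: declare $x \preceq_P y$ if and only if $\sigma_0^{-1}(x) \le \sigma_0^{-1}(y)$ and $\rho_0^{-1}(x) \le \rho_0^{-1}(y)$. Then $\sigma_0,\rho_0 \in \SGR{P}$ and $P$ has dimension at most $2$ by construction. The equality $\SGR{P} = U$ comes from the characterization $\gamma \in U \iff \inv{\sigma_0} \subseteq \inv{\gamma} \subseteq \inv{\rho_0}$: any $(a,b) \in \inv{\sigma_0}$ lies in $\inv{\rho_0}$ as well, hence $b \prec_P a$, which forces the inversion in any $\gamma \in \SGR{P}$; the dual argument handles the upper bound. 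To see that $P$ is regular, I would proceed by contradiction. A violating triple $(u,v,w)$ with, say, $v \prec_P w$, $(u,v),(u,w) \in \inc{P}$ and $w<u<v$ places $v$ before $w$ in both one-line notations, while putting $u$ on opposite sides of $v$ and of $w$. A short case analysis on the two placements of $u$ shows that one of the pairs $(v,u)$ or $(u,w)$ is an inversion of $\sigma_0$ but not of $\rho_0$, contradicting $\inv{\sigma_0} \subseteq \inv{\rho_0}$.

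For (b) $\Rightarrow$ (a), assuming $P$ is regular, I would construct the extremal linear extensions $\sigma_{\min},\sigma_{\max} \in \SGR{P}$ by greedy procedures that always place the smallest (resp.\ largest) currently available element of $P$. A standard exchange argument shows these are the minimum and maximum of $\SGR{P}$ under $\preceq_R$, so $\SGR{P} \subseteq [\sigma_{\min},\sigma_{\max}]_R$. The reverse inclusion is the content of the claim: given $\gamma$ in the interval with $\gamma \notin \SGR{P}$, some covering relation $v \prec_P w$ is violated, i.e.\ $w$ appears before $v$ in $\gamma$. Since both $\sigma_{\min}$ and $\sigma_{\max}$ place $v$ before $w$, walking along a saturated chain in the right weak order from $\sigma_{\min}$ up to $\gamma$ I can isolate the first adjacent transposition that produces a $\gamma'$ with $w$ before $v$; that swap exchanges $w$ with some element $u$ across the block containing $v$. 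Then $u$ is incomparable to both $v$ and $w$ in $P$ (otherwise $\sigma_{\min}$ or $\sigma_{\max}$ would forbid the swap), and its numerical value must satisfy $w<u<v$ or $v<u<w$ because the transposition is adjacent in value, giving a violation of regularity.

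\textbf{Main obstacle.} The core difficulty is the final step above: extracting the incomparability triple from a ``bad'' intermediate $\gamma$. This requires a delicate combinatorial chase along a saturated chain in the weak order, simultaneously tracking which adjacent transpositions preserve membership in $\SGR{P}$ and which do not, and reading the numerical constraints off the descent structure. By comparison, (a) $\Rightarrow$ (c) is essentially bookkeeping with $\inv{\sigma_0} \subseteq \inv{\rho_0}$, and (c) $\Rightarrow$ (b) is immediate.
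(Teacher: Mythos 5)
Your (a)$\Rightarrow$(c) argument is sound, and note that the paper cites \cite[Theorem 6.8]{91BW} rather than proving it, so there is no in-paper argument to compare against. The gap is in (b)$\Rightarrow$(a), and you have the difficulty exactly backwards. The step you wave off as ``a standard exchange argument'' --- that the greedy extensions $\sigma_{\min}$ and $\sigma_{\max}$ are the $\preceq_R$-minimum and $\preceq_R$-maximum of $\SGR{P}$ --- is precisely where regularity must enter, and it is false without it. Concretely, take $P$ on $[4]$ whose only strict relations are $4\prec_P 1$ and $3\prec_P 2$. Then $4132\in\SGR{P}$, while greedy-smallest gives $\sigma_{\min}=3241$, and $\inv{3241}=\{(3,2),(3,1),(2,1),(4,1)\}\not\subseteq\inv{4132}=\{(4,1),(4,2),(4,3),(3,2)\}$, so $\sigma_{\min}\not\preceq_R 4132$ and $\SGR{P}\not\subseteq[\sigma_{\min},\sigma_{\max}]_R$. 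Here the triple $(2,4,1)$ violates regularity, but nothing in your sketch consumes that hypothesis when asserting extremality, so the argument as written would ``prove'' the interval property for this non-regular $P$.

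Conversely, the direction you designate as the ``main obstacle,'' $[\sigma_{\min},\sigma_{\max}]_R\subseteq\SGR{P}$, is automatic for any poset, with no regularity needed: if $\sigma\preceq_R\gamma\preceq_R\tau$ with $\sigma,\tau\in\SGR{P}$, then for any cover $a\prec_P b$ the pair $(a,b)$ lies in $\inv{\sigma}\subseteq\inv{\gamma}$ when $a>b$ and in $\coinv{\tau}\subseteq\coinv{\gamma}$ when $a<b$; either way $a$ precedes $b$ in $\gamma$, so $\gamma\in\SGR{P}$. Hence the $\gamma\notin\SGR{P}$ you posit at the start of the chain-walking argument cannot exist, and that whole argument is moot --- and also internally shaky, since a right-weak-order cover that flips the relative order of $v$ and $w$ must transpose $v$ and $w$ themselves (they occupy adjacent positions), not ``exchange $w$ with some element $u$ across the block containing $v$.'' The work you actually owe is the opposite inclusion: use regularity to show that every $\tau\in\SGR{P}$ satisfies $\sigma_{\min}\preceq_R\tau\preceq_R\sigma_{\max}$. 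That is the technical heart of Bj\"orner--Wachs's proof and cannot be deferred to a ``standard exchange argument.''
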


Note that if $P,P' \in \poset{n}$ are different, then $\SGR{P} \neq \SGR{P'}$.
Therefore \cref{Thm: BW equivalent} implies that for any right weak Bruhat interval $I$ in $\SG_n$, there exists a unique poset $P_I \in \poset{n}$ such that $\SGR{P_I} = I$.
It should be emphasized that the proof of \cite[Theorem 6.8]{91BW} shows not only the existence of $P_I$, but also how to construct it.

\begin{proposition}\label{Prop: Interval module is poset module}
Every right weak Bruhat interval module is a poset module. 
\end{proposition}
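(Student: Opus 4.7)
The plan is to reduce the statement to the combinatorial characterization of \cref{Thm: BW equivalent} and then simply match the $H_n(0)$-actions on the nose. Fix a right weak Bruhat interval $[\sigma,\rho]_R \subseteq \SG_n$ and consider $\osfB_R(\sigma,\rho)$.

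First I would dispose of the trivial case $\sigma = \rho$. Here the interval consists of a single permutation $\sigma$, and one can exhibit a chain poset $P \in \poset{n}$ with $\sigma(1) \prec_P \sigma(2) \prec_P \cdots \prec_P \sigma(n)$; the unique linear extension $E$ of $P$ satisfies $E_R = \sigma$, so $\SGR{P} = \{\sigma\} = [\sigma,\rho]_R$. Both $\DHTMP{P}$ and $\osfB_R(\sigma,\rho)$ are then one-dimensional with $\sigma$ acted on by $-1$ on $\{\opi_i : i \in \Des{R}{\sigma}\}$ and by $0$ on the remaining generators, so the two modules coincide.

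Now assume $|[\sigma,\rho]_R| > 1$. By \cref{Thm: BW equivalent} (applied to $U = [\sigma,\rho]_R$) there exists a regular poset $P \in \poset{n}$ such that $\SGR{P} = [\sigma,\rho]_R$. Hence $\DHTMP{P}$ and $\osfB_R(\sigma,\rho)$ share the same underlying vector space $\C[\sigma,\rho]_R = \C\SGR{P}$. Comparing \cref{eq: action of M_P} with the defining action of $\osfB_R(\sigma,\rho)$, the two actions are given by literally the same three-case formula, with $[\sigma,\rho]_R$ replaced by $\SGR{P}$; since these two sets coincide, the actions agree generator-by-generator. Therefore $\osfB_R(\sigma,\rho) = \DHTMP{P}$ as right $H_n(0)$-modules, which is in particular an object of $\scrP{}{n}$, proving the proposition.

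The only non-routine ingredient is the existence of the poset $P$ with $\SGR{P} = [\sigma,\rho]_R$, which is precisely what \cref{Thm: BW equivalent} supplies; the rest is a direct comparison of the piecewise definitions of the two actions. No obstacle should arise beyond invoking Bj\"{o}rner--Wachs's characterization and handling the one-element interval separately.
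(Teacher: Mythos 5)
Your proof is correct and follows essentially the same route as the paper: invoke Bj\"{o}rner--Wachs's characterization (\cref{Thm: BW equivalent}) to produce a poset $P$ with $\SGR{P} = [\sigma,\rho]_R$, then observe that the defining formulas for the two $H_n(0)$-actions coincide verbatim. Your explicit treatment of the one-element interval is a small but legitimate refinement, since the paper's preamble cites \cref{Thm: BW equivalent} as if it also covered $|U|=1$, whereas that theorem is stated only for $|U|>1$.
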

\begin{proof}
Let $I$ be a right weak Bruhat interval of $\mathfrak S_n$.
Since $\SGR{P_I} = I$, $\osfB_R(I)$ and $\DHTMP{P_I}$ share the same basis $I$. Therefore the assertion is immediate from the fact that the $H_n(0)$-action on $I$ in $\osfB_R(I)$ is identical to the $H_n(0)$-action on $\SGR{P_I}$ in $\DHTMP{P_I}$.
\end{proof}

\begin{remark}
The converse of \cref{Prop: Interval module is poset module} may not be true.
More generally, the poset module is not necessarily isomorphic to the direct sum of some right weak Bruhat interval modules.
For example, let 
$P =  
\begin{tikzpicture}[baseline=5mm,xscale=1.3]
\node[shape=circle,draw,minimum size=\ccc, inner sep=0pt] at (-\hp*0.3, 0) (A1) {\tiny $1$};
\node[shape=circle,draw,minimum size=\ccc, inner sep=0pt] at (\hp*1, \vp*2) (A2) {\tiny $2$};
\node[shape=circle,draw,minimum size=\ccc, inner sep=0pt] at (\hp*1, \vp*1) (A3) {\tiny $3$};
\node[shape=circle,draw,minimum size=\ccc, inner sep=0pt] at (\hp*2, \vp*2) (A4) {\tiny $4$};
\node[shape=circle,draw,minimum size=\ccc, inner sep=0pt] at (\hp*2, \vp*1) (A5) {\tiny $5$};

\draw (A2) -- (A1) -- (A5);
\draw (A3) -- (A4);
\draw[line width = \lw] (A2) -- (A3); 
\draw[line width = \lw] (A5) -- (A4); 
\end{tikzpicture}$.
We note that the projective cover of $\DHTMP{P}$ is the projective indecomposable module $\calP^R_{(2,2,1)}$, thus $\DHTMP{P}$ is indecomposable.
We also note that $\dim \DHTMP{P} = 8$ and $$\ch_R([\DHTMP{P}]) = F_{(3,2)} + F_{(3,1,1)}
 + 2F_{(2,2,1)} + F_{(2,1,2)} + F_{(1,3,1)} + F_{(1,2,2)} + F_{(1,2,1,1)}.$$
Using \textsc{SageMath}, the authors verified that there are no permutations $\sigma, \rho \in \SG_5$ satisfying that $\dim \osfB_R(\sigma,\rho) = 8$ and $\ch_R([\osfB_R(\sigma,\rho)]) = \ch_R([\DHTMP{P}])$.
\end{remark}

Next, let us deal with {\it Step 4},
which concerns a functor from $\Rmod$ to $\modR$ that preserves the quasisymmetric characteristic and sends a left weak Bruhat interval module to a right weak Bruhat interval module.
For $n \in \N_0$, we consider the contravariant functor 
\[
\calF_n: \Rmod \ra \modR
\]
which assigns to each object $M$ the dual space $M^*:= \Hom(M,\C)$ endowed with the right $H_n(0)$-action given by
\begin{equation*}
(\phi \cdot \opi_\sigma)(v) = \phi(\opi_{\sigma^{-1}} \cdot v) \quad \text{ for } \phi \in M^* \text{ and } v \in M,
\end{equation*}
and to each $H_n(0)$-module homomorphism $h:M \ra N$ the dual $\C$-homomorphism $\calF_n(h) = \Hom_\C(h,K): \calF_n(N) \ra \calF_n(M)$ given by $\phi \mapsto \phi \circ h$.
Then, consider the map 
\[
\ocalF: \calG(\Rbmod) \ra  \calG(\modRb), \quad [M] \mapsto [\calF_n(M)],
\]
where $M$ is a left $H_n(0)$-module.
It is not difficult to show that it is a Hopf algebra isomorphism.
With this preparation, we can state the following proposition.
\begin{proposition}\label{Prop: B and bB}

{\rm (a)} 
Let $\sigma, \rho \in \SG_n$ with $\sigma \preceq_L \rho$.
As right $H_n(0)$-modules,
\begin{equation*}
\calF_n(\sfB_L(\sigma,\rho)) \cong \osfB_R(w_0\rho^{-1}, w_0\sigma^{-1}).
\end{equation*}

{\rm (b)}
For every left $H_n(0)$-module $M$,
\[
\ch_L([M]) =\ch_R \circ \ocalF([M]).
\]
\end{proposition}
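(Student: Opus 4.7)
For part (a), my plan is to exhibit an explicit right $H_n(0)$-module isomorphism
\[
g \colon \osfB_R(w_0\rho^{-1},w_0\sigma^{-1}) \longrightarrow \calF_n(\sfB_L(\sigma,\rho))
\]
determined on the basis by $g(w_0\gamma^{-1}) = \gamma^{*}$, where $\{\gamma^{*} \mid \gamma \in [\sigma,\rho]_L\}$ denotes the basis of $\calF_n(\sfB_L(\sigma,\rho))$ dual to $[\sigma,\rho]_L$. The first task is to check that $\gamma \mapsto w_0\gamma^{-1}$ is a bijection from $[\sigma,\rho]_L$ onto $[w_0\rho^{-1},w_0\sigma^{-1}]_R$; this follows by combining the identity $\gamma \preceq_L \delta \Leftrightarrow \gamma^{-1} \preceq_R \delta^{-1}$ with the fact that left multiplication by $w_0$ reverses the right weak Bruhat order, which is itself immediate from the inversion-set characterisation in~\eqref{Eq: weak Bruhat right order2}. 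Hence $g$ is a well-defined $\C$-linear isomorphism.

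Next I verify that $g$ commutes with the right action of each generator $\opi_i$. The key identity here is
\[
i \in \Des{R}{w_0\gamma^{-1}} \quad\Longleftrightarrow\quad i \notin \Des{L}{\gamma},
\]
obtained by comparing $(w_0\gamma^{-1})(i) = n+1-\gamma^{-1}(i)$ with $(w_0\gamma^{-1})(i+1)$ and using the standard equivalence $\Des{L}{\gamma} = \Des{R}{\gamma^{-1}}$. On the $\osfB_R$ side, I will expand the three defining cases of $\delta \cdot \opi_i$; since $w_0\gamma^{-1} s_i = w_0(s_i\gamma)^{-1}$, the bijection gives $w_0\gamma^{-1} s_i \in [w_0\rho^{-1},w_0\sigma^{-1}]_R \Leftrightarrow s_i\gamma \in [\sigma,\rho]_L$, so these cases translate cleanly into conditions on $\gamma$. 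On the $\calF_n$ side, I will combine $\opi_i = \pi_i - 1$ with the contravariant action $(\phi\cdot\opi_i)(v) = \phi(\opi_i \cdot v)$ and carefully track the two potentially nonzero contributions, from $\mu = \gamma$ and $\mu = s_i\gamma$, to show that
\[
\gamma^{*}\cdot\opi_i = \begin{cases} -\gamma^{*} & \text{if } i \notin \Des{L}{\gamma},\\ (s_i\gamma)^{*} & \text{if } i \in \Des{L}{\gamma} \text{ and } s_i\gamma \in [\sigma,\rho]_L,\\ 0 & \text{otherwise.}\end{cases}
\]
Matching these three outputs against the three cases on the $\osfB_R$ side yields $g(\delta\cdot\opi_i) = g(\delta)\cdot\opi_i$ and completes the proof of (a).

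For part (b), I plan to deduce the identity from the one-dimensional observation $\calF_n(\simFL{\alpha}) \cong \simFR{\alpha}$. Rewriting the actions defined in~\cref{subsec: 0-Hecke alg and QSym} in terms of $\opi_i = \pi_i - 1$ shows that on both modules the generator $\opi_i$ acts as $-\id$ when $i \in \set(\alpha)$ and as $0$ when $i \notin \set(\alpha)$; dualising preserves this action on the one-dimensional dual basis vector. Since $\calF_n$ is exact on finite-dimensional modules, the induced map $\ocalF$ is a $\Z$-linear map of Grothendieck groups sending $[\simFL{\alpha}]$ to $[\simFR{\alpha}]$, so composing with $\ch_R$ and comparing with the definition of $\ch_L$ forces $\ch_R \circ \ocalF = \ch_L$ on all of $\calG(\Rbmod)$. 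The principal obstacle is the bookkeeping in the case analysis of part (a), especially ensuring that the contribution from $\mu = s_i\gamma$ (which depends on whether $s_i\gamma$ lies in the interval and on its left descents relative to those of $\gamma$) is correctly handled; once the descent correspondence above is established, however, the remaining verifications are routine.
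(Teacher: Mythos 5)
Your proposal is correct and takes essentially the same approach as the paper: you exhibit the correspondence $\gamma^{*} \leftrightarrow w_{0}\gamma^{-1}$ between the dual basis of $\calF_n(\sfB_L(\sigma,\rho))$ and the interval $[w_0\rho^{-1},w_0\sigma^{-1}]_R$ (your $g$ is simply the inverse of the paper's map $f$), verify compatibility of the generator actions via the descent identity $i \in \Des{R}{w_0\gamma^{-1}} \Leftrightarrow i \notin \Des{L}{\gamma}$, and deduce (b) from $\calF_n(\simFL{\alpha}) \cong \simFR{\alpha}$. The only cosmetic difference is that the paper obtains $\calF_n(\bfF^L_\alpha)=\bfF^R_\alpha$ as a consequence of part (a), whereas you verify it directly from the one-dimensional actions; both routes are immediate.
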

\begin{proof}
(a) Let us consider the linear map 
\[
f: \calF_n(\sfB_L(\sigma,\rho))
\ra  \osfB_R(w_0\rho^{-1}, w_0\sigma^{-1}), \quad \gamma^* \mapsto w_0 \gamma^{-1},
\]
where $\gamma \in [\sigma,\rho]_L$ and $\gamma^*$ denotes the dual of $\gamma$ with respect to the basis $[\sigma, \rho]_L$ for $\sfB_L(\sigma,\rho)$.
Since $\gamma \in [\sigma,\rho]_L$ if and only if $w_0 \gamma^{-1} \in [w_0 \rho^{-1}, w_0 \sigma^{-1}]_R$, $f$ is bijective.
We claim that $f$ is a right $H_n(0)$-module homomorphism.

Let $1 \leq i \le n-1$ and $\gamma \in [\sigma,\rho]_L$.
By the definition of $\calF_n$,
\[
 \gamma^* \star \opi_i 
= \begin{cases}
-\gamma^* & \text{if $i \notin \Des{L}{\gamma}$,} \\
0 & \text{if $i \in \Des{L}{\gamma}$ and $s_i\gamma \notin [\sigma, \rho]_L$,} \\
(s_i \gamma)^* & \text{if $i \in \Des{L}{\gamma}$ and $s_i\gamma \in [\sigma, \rho]_L$},
\end{cases}
\]
where $\star$ denotes the the right $H_n(0)$-action on $\calF_n(\sfB_L(\sigma,\rho))$.
Thus,
\begin{align}\label{eq: f(opi action on gamma*)}
f(\gamma^* \star \opi_i) 
= \begin{cases}
-w_0 \gamma^{-1} & \text{if $i \notin \Des{L}{\gamma}$,} \\
0 & \text{if $i \in \Des{L}{\gamma}$ and $s_i\gamma \notin [\sigma, \rho]_L$,} \\
w_0 \gamma^{-1} s_i & \text{if $i \in \Des{L}{\gamma}$ and $s_i\gamma \in [\sigma, \rho]_L$.}
\end{cases}
\end{align}
On the other hand, we have
\begin{align}\label{eq: opi action on f(gamma*)}
f(\gamma^*) \cdot \opi_i 
= \begin{cases}
-w_0 \gamma^{-1} & \text{if $i \in \Des{R}{w_0 \gamma^{-1}}$,} \\
 0 & \text{if $i \notin \Des{R}{w_0 \gamma^{-1}}$ and $w_0 \gamma^{-1} s_i \notin [w_0 \rho^{-1}, w_0 \sigma^{-1}]_R$,} \\
w_0 \gamma^{-1} s_i & \text{if $i \notin \Des{R}{w_0 \gamma^{-1}}$ and $w_0 \gamma^{-1} s_i \in [w_0 \rho^{-1}, w_0 \sigma^{-1}]_R$.}
\end{cases}
\end{align}
Note that $i \notin \Des{L}{\gamma}$ if and only if $i \in \Des{R}{w_0 \gamma^{-1}}$.
Combining this with \cref{eq: f(opi action on gamma*)} and \cref{eq: opi action on f(gamma*)} shows that $f$ is a right $H_n(0)$-module isomorphism. 

(b) In \cref{Subsec: Step 1 an Step 2}, we remarked that the quasisymmetric characteristic
\begin{equation*}
\ch_L : \calG(\Rbmod) \ra \Qsym, \quad [\bfF^L_{\alpha}] \mapsto F_{\alpha}.
\end{equation*}
is a Hopf algebra isomorphism.
And, from (a) it follows that  $\calF_n(\bfF^L_\alpha) = \bfF^R_\alpha$. 
Consequently, we derive that  
\[
\ch_R([\calF_n(\bfF^L_\alpha)]) = F_\alpha = \ch_L([\bfF^L_\alpha])
\]
for every $\alpha \models n$.
\end{proof}

The rest of this subsection is devoted to investigating the compatibility of $\calF_n$ and the $\upchi$-twists on weak Bruhat interval modules.
In \cref{Sec: automorphism twists}, we explained that any (anti-)automorphism $f$ of $H_n(0)$ induces an endofunctor on $\modR$, called the $f$-twist, and denoted by $\mathbf{T}^{+}_{f}$ if $f$ is an automorphism and $\mathbf{T}^{-}_{f}$ otherwise.
In the same manner, one can define a similar endofunctor on $\Rmod$ without difficulty. 
For simplicity of notation, this functor is also written as $\mathbf{T}^{\pm}_{f}$.

Recall that $\osfB_L(\sigma,\rho)$ and $\sfB_R(\sigma,\rho)$ are 
defined as the $\theta$-twists of $\sfB_L(\sigma,\rho)$ and $\osfB_R(\sigma,\rho)$, respectively. However, $\mathbf{T}^{+}_{\uptheta}$ is rather unsatisfactory in that it does not preserve the quasisymmetric characteristic. 
Indeed, it holds that   $\ch_L([\mathbf{T}^{+}_{\uptheta}(\bfF^L_\alpha)])=\ch_R([\mathbf{T}^{+}_{\uptheta}(\bfF^R_{\alpha})])=F_{\alpha^\rmc}$.
Contrary to $\mathbf{T}^{+}_{\uptheta}$, $\mathbf{T}^{-}_{\upchi}$ 
preserves the quasisymmetric characteristic since $\ch_L([\mathbf{T}^{-}_{\upchi}(\bfF^L_\alpha)])=\ch_R([\mathbf{T}^{-}_{\upchi}(\bfF^R_\alpha)])=F_{\alpha}$.
Referring to \cref{correction of WBI paper}, 
we let 
\begin{itemize}
\item $\mathcal{B}^L_n$:= the full subcategory of $\Rmod$ whose objects are direct sums of finitely many isomorphic copies of left weak Bruhat interval modules,
    
\item ${\overline {\mathcal B}}^R_n$:= the full subcategory of $\modR$ whose objects are direct sums of finitely many isomorphic copies of right weak Bruhat interval modules,

\item
${\overline {\mathcal B}}^L_n$:= $\mathbf{T}^{+}_{\uptheta}(\mathcal{B}^L_n)$, \text{ and} 

\item
$\mathcal{B}^R_n$:= $\mathbf{T}^{+}_{\uptheta}({\overline {\mathcal B}}^R_n)$.
\end{itemize}
In \cite[Theorem 4(3)]{22JKLO}, it was shown that $\mathbf{T}^-_{\autochi}(\sfB_L(\sigma,\rho)) \cong \osfB_L(\rho w_0, \sigma w_0)$.
Similarly, one can easily see that $\mathbf{T}^-_{\autochi}(\osfB_R(\sigma,\rho)) \cong \sfB_R(w_0\rho, w_0\sigma)$.
Therefore we can derive the following four characteristic-preserving equivalences 
\begin{align*}
 & \calF_n| : \mathcal{B}^L_n \to  {\overline {\mathcal B}}^R_n, \qquad 
 \calF_n| : {\overline {\mathcal B}}^L_n \to  \mathcal{B}^R_n\\
 &\mathbf{T}^-_{\autochi}|: \mathcal{B}^L_n \to {\overline {\mathcal B}}^L_n, \qquad 
 \mathbf{T}^-_{\autochi}|: {\overline {\mathcal B}}^R_n \to \mathcal{B}^R_n
\end{align*}
such that the diagram 
\begin{equation*}\label{inductuion0}
\begin{CD}
\mathcal{B}^L_n @>{\calF_n|}>> {\overline {\mathcal B}}^R_n\\
@V{\mathbf{T}^-_{\autochi}|}VV @V{\mathbf{T}^-_{\autochi}|}VV\\
{\overline {\mathcal B}}^L_n @>{\calF_n|}>> \mathcal{B}^R_n
\end{CD}
\end{equation*}
is commutative.

\subsection{{\it Step 5}}
\label{Subsec: pomega-partitions}
Let $\balpha$ and $\mbalpha$ be one of the quasisymmetric functions and $H_n(0)$-modules in the list of~\cref{Table: our consideration}, respectively.
Let $\setIndSummandY$ denote the set of indecomposable direct summands of $\mbalpha$. 
We remarked in \cref{Subsec: Step 1 an Step 2} that each $V \in \setIndSummandY$ is isomorphic to $\sfB_L(\sigma_V, \rho_V)$.
Considering \cref{Prop: B and bB}, we set
$I_V := [w_0 \rho_V^{-1}, w_0 \sigma_V^{-1}]_R$.
For later use, consider the bijection \begin{align}\label{eq: bold f map}
\mapf: \SG_n \ra \SG_n, \quad   \gamma \mapsto w_0 \gamma^{-1}.
 \end{align}
Then it holds that $I_V = \mapf([\sigma_V,\rho_V]_L)$.

Through {\it Step 1}  to {\it Step 4}, we can derive that $\DHTMP{P_{I_{V}}}$ is isomorphic to $\calF_n(V)$.
For the simplicity of notation, we write $\posetFromAlgo{V}$ for $P_{I_{V}}$.
This, together with \cref{prop: DHT proposition}, implies that $\balpha$ appears as the sum of $P$-partition generating functions of certain posets $P$, more precisely, 
\begin{equation}\label{Eq: Yalpha = sum KP}
\balpha = \sum_{V \in \setIndSummandY} K_{\posetFromAlgo{V}}.
\end{equation}

In this subsection, applying Liu--Weselcouch's result \cite[Theorem 6.9]{21LW}, 
we obtain the quasisymmetric power sum expansion of $\balpha$.
Let us consider the totally ordered sets $\bfP' = (\{\pm i \mid i \in \N\},  \le_{\bfP'})$ and $\bfP^* = (\{\pm i, i^* \mid i \in \N\},  \le_{\bfP^*})$, where
\begin{align*}
& -1 <_{\bfP'} 1 <_{\bfP'} -2 <_{\bfP'} 2 <_{\bfP'} \cdots \ \text{ and }
\\
&-1 <_{\bfP^*} 1^* <_{\bfP^*} 1 <_{\bfP^*} -2 <_{\bfP^*} 2^* <_{\bfP^*} 2 <_{\bfP^*} \cdots.
\end{align*}
For $i \in \N$, set $|-i|:=i$, $|i|:= i$, and $|i^*| := i$.
Given a poset $P$, an {\em enriched $P$-partition} is a map $f: P \ra \bfP'$ satisfying the following conditions:
\begin{enumerate}[label = {\rm (\roman*)}]
\item 
$\{|f(x)| : x \in P\} = [k]$ 
for some $k$.
\item Let $x,y \in P$ with $x \preceq^{c}_P y$.
Then
\begin{enumerate}[label = $\bullet$]
\item $f(x) \leq_{\bfP'} f(y)$,
\item if $|f(x)| = |f(y)|$ and $x < y$, then $f(y) > 0$,
\item if $|f(x)| = |f(y)|$ and $x > y$, then $f(x) < 0$.
\end{enumerate}
\end{enumerate}

For $x \in P$ and an enriched $P$-partition $f$, define the map $f_x:P \ra \bfP'$ by
\[
f_x(y) = \begin{cases}
-f(x) & \text{if } x = y, \\
f(y) & \text{otherwise.}
\end{cases}
\]
We say that an element $x \in P$ is {\em ambiguous with respect to $f$} if $f_x$ is still an enriched $P$-partition.
For an enriched $P$-partition $f$, define the map  $f^*:P \ra \bfP^*$ by
\[
f^*(x) = \begin{cases}
|f(x)|^* & \text{if $x$ is ambiguous with respect to $f$},\\
f(x) & \text{otherwise.}
\end{cases}
\]
It is called a {\em starred $P$-partition}.
We need three statistics related to $f^*$.
The {\em ambiguity} of $f^*$ is defined by the sequence 
$$
{\rm amb}(f^*) := (a_1,a_2,\ldots,a_k),
$$ 
where $a_i=|(f^*)^{-1}(i^*)|$, the number of elements labeled $i^*$ by $f^*$. 
The {\em sign} of $f^*$ is defined by 
\[
{\rm sign}(f^*) := (-1)^{|\{x \mid -f^*(x) \in \N\}|}.
\]
Finally, the {\em weight} of $f^*$ is defined by the sequence 
$\wt(f^*) := (b_1,b_2,\ldots,b_k)$, where $b_i$ is the number of elements of labeled $-i,i^*$, or $i$ by $f$.
Given $P \in \poset{n}$ and $\beta \models n$, let
\begin{align}\label{eq: sfpt_P}
\mathsf{pt}_{P}(\beta) := 
\{ 
f^* \mid \text{$f^*$ is a starred $P$-partition with $\mathrm{amb}(f^*) = (1^{\ell(\beta)})$ and  $\wt(f^*) = \beta$}
\}.
\end{align}

\begin{lemma}{\rm (\cite[Theorem 6.9]{21LW})}\label{Lem: Liu--Weselcouch result}
For $P \in \poset{n}$,
\[
K_P = \sum_{\beta} \left(\sum_{f^* \in \mathsf{pt}_{P}(\beta)}\sign{f^*} \right)\frac{\Psi_\beta}{z_\beta}.
\]
\end{lemma}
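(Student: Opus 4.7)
My plan is to expand both sides in the monomial quasisymmetric basis $\{M_\gamma\}$ and match coefficients. For the left side, $K_P = \sum_\gamma c_\gamma(P)\, M_\gamma$, where $c_\gamma(P)$ counts $P$-partitions $f$ with $|f^{-1}(i)| = \gamma_i$ (this follows from grouping $P$-partitions by content). For the right side, substituting the explicit formula $\Psi_\beta/z_\beta = \sum_{\gamma \succeq \beta} M_\gamma/\pi(\beta,\gamma)$ recalled in \cref{preliminaries on Quasisymmetric functions} reduces the lemma to the combinatorial identity
\[
c_\gamma(P) \;=\; \sum_{\beta \preceq \gamma} \frac{1}{\pi(\beta,\gamma)} \sum_{f^* \in \mathsf{pt}_{P}(\beta)} \sign{f^*}
\]
for every composition $\gamma$ of $n$.

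To prove this identity, I would introduce the auxiliary series $\Gamma_P := \sum_{f^*} \sign{f^*}\, x^{\wt(f^*)}$, summed over \emph{all} starred $P$-partitions, and compute it in two ways. Grouping by the ambiguity sequence and interpreting a starred $P$-partition as a pair consisting of an element of $\mathsf{pt}_{P}(\beta)$ together with an ordered decomposition of each $\gamma_i$-block into the $\beta$-refinement subblocks should produce the right-hand side above, with the factor $\pi(\beta,\gamma)$ emerging naturally as the count of such block decompositions. Alternatively, I would project $\Gamma_P$ by forgetting signs and stars via $f^* \mapsto |f^*|$, then apply a sign-reversing involution that toggles the sign of a canonical ambiguous element of $f^*$. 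After cancellation, only those $f^*$ with no ambiguous element survive; these correspond bijectively to genuine $P$-partitions with content $\gamma$ and contribute $c_\gamma(P)$ in total.

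The main obstacle I anticipate is constructing the sign-reversing involution and verifying it is well-defined. Ambiguity depends delicately on the strict/natural-edge structure of $P$: toggling the sign (or star) of a single element $x$ must neither violate the starred $P$-partition conditions at covers involving $x$, nor spoil ambiguity of neighboring elements with the same absolute value. Choosing the lexicographically smallest ambiguous element in each $i$-level is the natural candidate, but confirming that the map is an involution requires a careful local analysis at the boundaries of each level (specifically, at elements $x,y$ with $|f^*(x)| = |f^*(y)|$ and $x \preceq^c_P y$). A secondary difficulty is matching the coefficient $1/\pi(\beta,\gamma)$ precisely, which demands tracking how the surviving configurations distribute over ambiguity-subblocks inside the weight-blocks; I would expect the product structure $\pi(\beta,\gamma) = \prod_i \pi(\beta^{(i)})$ to align with a corresponding factorization of the involution block by block.
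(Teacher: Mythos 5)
The paper does not prove this statement; it is quoted directly as Theorem~6.9 of Liu--Weselcouch \cite{21LW}, so there is no in-paper proof for your proposal to be measured against. What I can assess is whether your sketch would, if fleshed out, constitute a proof of the cited result, and here there are substantial unresolved gaps.

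Your reduction to the coefficient identity
$c_\gamma(P) = \sum_{\beta \preceq \gamma} \frac{1}{\pi(\beta,\gamma)} \sum_{f^* \in \mathsf{pt}_{P}(\beta)}\sign{f^*}$
via the $M_\gamma$-expansion of $\Psi_\beta/z_\beta$ is sound. The trouble starts with the two claimed computations of $\Gamma_P := \sum_{f^*}\sign{f^*}\,x^{\wt(f^*)}$. In the first computation you want $\pi(\beta,\gamma)$ to ``emerge as a count of block decompositions,'' but if a starred $P$-partition of weight $\gamma$ really decomposes as a pair of an element of $\mathsf{pt}_{P}(\beta)$ with one of $\pi(\beta,\gamma)$ choices of decomposition, then that $\pi(\beta,\gamma)$ lands in the \emph{numerator} as a multiplicity, not the denominator, and the resulting series would be $\sum_\gamma M_\gamma \sum_\beta \pi(\beta,\gamma)\,c_\beta$ rather than $\sum_\gamma M_\gamma \sum_\beta c_\beta/\pi(\beta,\gamma)$. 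You need a precise statement of what is being counted and in which direction the map is many-to-one; as written the two sides are dimensionally inconsistent.

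The second computation is also not justified. You assert that after a sign-reversing involution, the surviving $f^*$ biject with $P$-partitions, i.e.\ that $\Gamma_P = K_P$. But the sum over all starred $P$-partitions is a weight enumerator for \emph{enriched} $P$-partitions (each starred $f^*$ collapses a $2^{(\text{\#\,ambiguous})}$-fold fiber of enriched $f$'s), and the standard enriched enumerator is a peak quasisymmetric function, not $K_P$. So the identity $\Gamma_P = K_P$ is itself a nontrivial claim requiring proof. Moreover the involution you propose---toggling the sign of a canonically chosen ambiguous element---is exactly the part you admit you cannot verify. Ambiguity of a vertex is not a local property: flipping $f^*(x)$ can create or destroy ambiguity at any $y$ with $|f^*(y)|=|f^*(x)|$ comparable to $x$, which is precisely what makes ``choose the lexicographically smallest ambiguous element'' fail to be an involution in general. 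Without a precise rule and a verification that it is sign-reversing, weight-preserving, ambiguity-compatible, and involutive, this step is a placeholder rather than an argument. In short, the skeleton is plausible, but neither leg of the two-way computation of $\Gamma_P$ is established, and the $\pi(\beta,\gamma)$ bookkeeping as stated points the wrong way.
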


Applying \cref{Lem: Liu--Weselcouch result} to \cref{Eq: Yalpha = sum KP} leads us to the following formula.
\begin{theorem}\label{Thm: Yalpha to quasipowersum}
Given $\alpha \models n$, let $\balpha$ be any quasisymmetric function listed in \cref{Table: our consideration}.
Then 
\[
\balpha =  \sum_{V \in \setIndSummandY}
\sum_{\beta \models n}  
\sum_{f^* \in \mathsf{pt}_{P_V}(\beta)} 
\sign{f^*}
\frac{\Psi_\beta}{z_\beta}.
\]
\end{theorem}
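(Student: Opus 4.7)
The plan is to assemble \cref{Thm: Yalpha to quasipowersum} by chaining together the identifications developed through \emph{Step 1} to \emph{Step 5} and then invoking the Liu--Weselcouch formula (\cref{Lem: Liu--Weselcouch result}) on each summand. The whole proof is essentially a bookkeeping exercise once \cref{Eq: Yalpha = sum KP} has been verified, so I would present the two pieces separately and then combine them.

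First I would establish \cref{Eq: Yalpha = sum KP} explicitly. Since $\mbalpha$ is a left $H_n(0)$-module satisfying $\ch_L([\mbalpha])=\balpha$, decomposing into indecomposables gives
\[
\balpha \;=\; \ch_L([\mbalpha]) \;=\; \sum_{V \in \setIndSummandY} \ch_L([V]).
\]
For each $V$, by the results recalled in \cref{Subsec: Step 1 an Step 2}, there exist $\sigma_V, \rho_V \in \SG_n$ with $\sigma_V \preceq_L \rho_V$ and $V \cong \sfB_L(\sigma_V,\rho_V)$. Applying \cref{Prop: B and bB}(b) and (a) in succession,
\[
\ch_L([V]) \;=\; \ch_R([\calF_n(V)]) \;=\; \ch_R([\osfB_R(w_0\rho_V^{-1}, w_0\sigma_V^{-1})]).
\]
Writing $I_V := [w_0\rho_V^{-1}, w_0\sigma_V^{-1}]_R$, \cref{Prop: Interval module is poset module} yields $\osfB_R(I_V) \cong \DHTMP{P_{I_V}}$; setting $P_V := P_{I_V}$ and appealing to \cref{prop: DHT proposition}(a), we obtain $\ch_R([\DHTMP{P_V}]) = K_{P_V}$. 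Summing over $V$ then gives \cref{Eq: Yalpha = sum KP}.

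Second, for each $V \in \setIndSummandY$, I would apply \cref{Lem: Liu--Weselcouch result} to $P_V$, viewed as the naturally labeled poset $(P_V, \omega)$ with $\omega(i)=i$. This directly produces
\[
K_{P_V} \;=\; \sum_{\beta \models n} \Bigl( \sum_{f^* \in \mathsf{pt}_{P_V}(\beta)} \sign{f^*} \Bigr) \frac{\Psi_\beta}{z_\beta}.
\]
Substituting this into \cref{Eq: Yalpha = sum KP} and interchanging the (finite) summations yields the claimed identity.

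There is no serious obstacle: every ingredient is already in place. The only point requiring a moment of care is the consistency of the identification $\calF_n(\sfB_L(\sigma_V,\rho_V)) \cong \osfB_R(w_0\rho_V^{-1}, w_0\sigma_V^{-1}) \cong \DHTMP{P_V}$, i.e.\ checking that the poset $P_V$ produced by Björner--Wachs's algorithm from $I_V$ is precisely the one whose linear extensions give the underlying basis of $\calF_n(V)$. This is guaranteed by the uniqueness statement following \cref{Thm: BW equivalent} together with \cref{Prop: Interval module is poset module}, so it requires no additional argument beyond a brief citation.
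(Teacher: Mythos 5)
Your proof is correct and follows the paper's own (implicit) route exactly: establish $\balpha = \sum_{V} K_{P_V}$ by chaining the characteristic-preserving identifications $V \cong \sfB_L(\sigma_V,\rho_V)$, $\calF_n(V) \cong \osfB_R(w_0\rho_V^{-1},w_0\sigma_V^{-1}) = \DHTMP{P_V}$ together with \cref{prop: DHT proposition}(a), then apply \cref{Lem: Liu--Weselcouch result} to each $K_{P_V}$ and interchange the finite sums. One terminological caveat: the labeled poset $(P_V,\id)$ is generally \emph{not} naturally labeled (the $P_V$'s typically have strict edges, since $\id$ need not lie in $I_V$), but this is immaterial because \cref{Lem: Liu--Weselcouch result} as stated applies to arbitrary $P \in \poset{n}$, not only naturally labeled ones.
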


\section{Quasisymmetric power sum expansions II: Explicit computations}
\label{Sec: Quasisymmetric power sum expansions II}
As in the previous section, for each $\alpha \models n$, let $\mbalpha$ be an arbitrary $H_n(0)$-module chosen from the list in \cref{Table: our consideration}. For every indecomposable direct summand $V$ of $\mbalpha$, we introduced the poset $P_V$ in \cref{Subsec: pomega-partitions}, which has the property that $\DHTMP{P_V}$ is isomorphic to $\mathcal{F}_n(V)$.

In this section, we provide a way of drawing the Hasse diagram of $P_V$. 
In the cases where $V = \mDIF{\alpha}$ or $\mESF{\alpha}$, we also describe the coefficients that appear in the quasisymmetric power sum expansion of $K_{P_V}$ in terms of certain tableaux. 
Let us explain our results in more detail.

In \cref{Sec51: canonical poset}, we assign a poset $P_D \in \poset n$ to each $n$-element subset $D$ of $\mathbb N^2$. Then, for any permutation $\rho \in \SG_n$ satisfying $w_0(\alpha) \preceq_L \rho$, we algorithmically construct an $n$-element subset $D_{\alpha;\rho}$ such that $\SGR{\ourPoset{D_{\alpha;\rho}}} = [w_0 \rho^{-1},w_0 \, w_0(\alpha)^{-1}]_R$.
For the definition of $w_0(\alpha)$, refer to \cref{Sec: weak Bruaht order}.
In \cref{Sec: The poset description of indecomposable direct summands}, we utilize the results from \cref{Sec51: canonical poset} to provide a specific description of $P_V$ in the cases where $\mbalpha = \mDIF{\alpha}$, $\mESF{\alpha}$, or $\mQS{\alpha}$. Our approach can be extended to other $\mbalpha$'s since they are obtained by applying (anti-)automorphism twists to one of $\mDIF{\alpha}, \mESF{\alpha},\mQS{\alpha}$.
Finally, in \cref{Tableau descriptions of the coefficients in the expansions}, we describe the coefficients in the quasisymmetric power sum expansions of the dual immaculate functions and the extended Schur functions in terms of certain border strip tableaux. 
To do this, we use Liu--Weselcouch's theorem \cite[Theorem 6.9]{21LW}, 
which is applicable due to the Hasse diagram obtained from our algorithm.

\subsection{The canonical posets associated with subsets of \texorpdfstring{$\N^2$}{Lg}}
\label{Sec51: canonical poset}
In this section, we consider an $n$-element subset of $\mathbb{N}^2$ as a diagram containing $n$ boxes located in the first quadrant. 
Let $\mathfrak{D}_n$ be the set of $n$-element subsets of $\N^2$ satisfying the following conditions:
\begin{enumerate}[label = $\bullet$]
\item $\{j \in \N \mid (i,j) \in D \ \text{for some $i \in \N$}\} = [k]$ for some $k \in \N$, and
\item $\{i \in \N \mid (i,j) \in D \ \text{for some $j \in \N$}\} = [l]$ for some $l \in \N$.
\end{enumerate}
The conditions stated above imply that the diagram $D$ has neither empty rows nor empty columns.
Although we could proceed without imposing these conditions, we have included them to prevent excessive redundancy.

\begin{definition}
For $D \in \mathfrak{D}_n$, we define $\ourPoset{D}$ to be the poset in $\poset{n}$ whose partial order $\preceq_{\ourPoset{D}}$ is given by
\begin{align}\label{eq: def of P_D}
i \preceq_{P_{D}} j \quad \text{if and only if} \quad
\text{$x_i \leq x_j$ and $y_i \leq y_j$.}
\end{align}
Here, for $1 \leq i \leq n$, $(x_i,y_i)$ is the $i$th element when enumerating the elements in $D$ along the rows from left to right starting with the uppermost row.
We call $P_D$ \emph{the canonical poset associated with $D$}.
\end{definition}

\begin{example}\label{Ex: D and diagram} 
Consider the subsets of $\N^2$
\begin{align*}
D & = \left\{(1,2), (1,4), (2,4), (3,1), (3,3), (4,4)\right\} \text{ and}\\     
D' & = \left\{(1,2), (1,4), (2,4), (3,1), (3,3), (3,4)\right\},
\end{align*}
which are viewed as the following diagrams:
\begin{equation*}
\begin{tikzpicture}[baseline=14mm]
\def \ud {7.2mm}
\draw (\ud*0,\ud*1) rectangle (\ud*1,\ud*2) node[xshift=-\ud*0.5,yshift=-\ud*0.5] {\tiny $(1,2)$};
\draw (\ud*0,\ud*3) rectangle (\ud*1,\ud*4) node[xshift=-\ud*0.5,yshift=-\ud*0.5] {\tiny $(1,4)$};
\draw (\ud*1,\ud*3) rectangle (\ud*2,\ud*4) node[xshift=-\ud*0.5,yshift=-\ud*0.5] {\tiny $(2,4)$};
\draw (\ud*2,\ud*2) rectangle (\ud*3,\ud*3) node[xshift=-\ud*0.5,yshift=-\ud*0.5] {\tiny $(3,3)$};
\draw (\ud*2,0) rectangle (\ud*3,\ud) node[xshift=-\ud*0.5,yshift=-\ud*0.5] {\tiny $(3,1)$};
\draw (\ud*3,\ud*3) rectangle (\ud*4,\ud*4) node[xshift=-\ud*0.5,yshift=-\ud*0.5] {\tiny $(4,4)$};
\foreach \c in {0,1,3}{
    \draw[dotted] (\ud*\c,\ud*0) rectangle (\ud*\c+\ud*1,\ud*1);
}
\foreach \c in {1,2,3}{
    \draw[dotted] (\ud*\c,\ud*1) rectangle (\ud*\c+\ud*1,\ud*2);
}
\foreach \c in {0,1,3}{
    \draw[dotted] (\ud*\c,\ud*2) rectangle (\ud*\c+\ud*1,\ud*3);
}
\foreach \c in {0,2}{
    \draw[dotted] (\ud*\c,\ud*3) rectangle (\ud*\c+\ud*1,\ud*4);
}
\end{tikzpicture}
\qquad \text{and} \qquad 
\begin{tikzpicture}[baseline=14mm]
\def \ud {7.2mm}
\draw (\ud*0,\ud*1) rectangle (\ud*1,\ud*2) node[xshift=-\ud*0.5,yshift=-\ud*0.5] {\tiny $(1,2)$};
\draw (\ud*0,\ud*3) rectangle (\ud*1,\ud*4) node[xshift=-\ud*0.5,yshift=-\ud*0.5] {\tiny $(1,4)$};
\draw (\ud*1,\ud*3) rectangle (\ud*2,\ud*4) node[xshift=-\ud*0.5,yshift=-\ud*0.5] {\tiny $(2,4)$};
\draw (\ud*2,\ud*2) rectangle (\ud*3,\ud*3) node[xshift=-\ud*0.5,yshift=-\ud*0.5] {\tiny $(3,3)$};
\draw (\ud*2,0) rectangle (\ud*3,\ud) node[xshift=-\ud*0.5,yshift=-\ud*0.5] {\tiny $(3,1)$};
\draw (\ud*2,\ud*3) rectangle (\ud*3,\ud*4) node[xshift=-\ud*0.5,yshift=-\ud*0.5] {\tiny $(3,4)$};
\foreach \c in {0,...,2}{
    \draw[dotted] (\ud*\c,\ud*0) rectangle (\ud*\c+\ud*1,\ud*1);
}
\foreach \c in {1,2}{
    \draw[dotted] (\ud*\c,\ud*1) rectangle (\ud*\c+\ud*1,\ud*2);
}
\foreach \c in {0,1}{
    \draw[dotted] (\ud*\c,\ud*2) rectangle (\ud*\c+\ud*1,\ud*3);
}
\foreach \c in {0,2}{
    \draw[dotted] (\ud*\c,\ud*3) rectangle (\ud*\c+\ud*1,\ud*4);
}
\end{tikzpicture}
\end{equation*}
Then 
\[
\begin{tikzpicture}[baseline=3.5mm]
\def \hp {8mm}
\def \vp {10mm}
\def \ccc {1.3mm}
\node[left] at (\hp*-0.5,\vp*1) {$P_D = P_{D'} = $};

\node[shape=circle,draw,minimum size=\ccc*3, inner sep=0pt] at (\hp*0.5, 0) (A5) {\tiny $5$};
\node[shape=circle,draw,minimum size=\ccc*3, inner sep=0pt] at (\hp*0, \vp*1.4) (A2) {\tiny $2$};
\node[shape=circle,draw,minimum size=\ccc*3, inner sep=0pt] at (\hp*0, \vp*0.6) (A1) {\tiny $1$};
\node[shape=circle,draw,minimum size=\ccc*3, inner sep=0pt] at (\hp*1, \vp*1) (A4) {\tiny $4$};
\node[shape=circle,draw,minimum size=\ccc*3, inner sep=0pt] at (\hp*1.7, \vp*0) (A6) {\tiny $6$};
\node[shape=circle,draw,minimum size=\ccc*3, inner sep=0pt] at (\hp, \vp*2) (A3) {\tiny $3$};

\draw (A1) -- (A2) -- (A3);
\draw[line width=0.5mm] (A1) -- (A5) -- (A4) -- (A3);
\draw[line width=0.5mm] (A4) -- (A6);
\end{tikzpicture} \ .
\]
As illustrated in this example, two distinct diagrams can yield the same poset.
\end{example}

Let $D \in \mathfrak{D}_n$.
A filling $T$ on $D$ with entries $1,2,\ldots,n$ is called a \emph{standard tableau} if it satisfies the following two conditions:
\begin{enumerate}[label = {\rm (\arabic*)}]
\item the entries of $T$ are all distinct,
\item 
$T(i,j) \le T(k,l)$ if $i \le k$ and $j \le l$.
\end{enumerate}
Here $T(i,j)$ is the entry of $T$ at $(i,j)$.
We denote by $\sfST{D}$ the set of standard tableaux on $D$ and by $\sourceTD{D}$ (resp. $\sinkTD{D}$) the standard tableau on $D$ obtained by filling integers $1, 2, \ldots, n$ without repetition along the rows from left to right, starting with the lowermost row 
(resp. along the columns from bottom to top, starting with the leftmost column).
Given $T \in \sfST{D}$,  let $\readingLR{T}$ be the word obtained by reading the entries of $T$ along the rows from right to left starting with the lowermost row.
It should be emphasized that in this paper, every word containing distinct entries from $1$ to $n$ is regarded as a permutation in $\SG_n$.
Let us consider the bijection:
\begin{align*}
\mapf: \SG_n \rightarrow \SG_n, \quad \gamma \mapsto w_0 \gamma^{-1},
\end{align*}
which has already been defined in \cref{eq: bold f map}.
We have the following theorem.

\begin{theorem}\label{Lem: SGTPD = w0readSink and w0readSource to the right}
Let $D \in \mathfrak{D}_n$. Then we have 
\[
\SGR{\ourPoset{D}} = \mapf([\readingLR{\sourceTD{D}},\readingLR{\sinkTD{D}}]_L).
\]
\end{theorem}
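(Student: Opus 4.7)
The plan is to reduce the claim to the Bj\"orner--Wachs characterization (\cref{Thm: BW equivalent}). First I would show that $P_D$ is regular, so that $\SGR{P_D}$ is a right weak Bruhat interval $[\alpha,\beta]_R$ with $\alpha \preceq_R \beta$. Then I would identify the endpoints as $\alpha = \mapf(\readingLR{\sinkTD{D}})$ and $\beta = \mapf(\readingLR{\sourceTD{D}})$. The theorem will then follow, since $\mapf$ sends every left weak Bruhat interval $[\sigma,\rho]_L$ bijectively onto $[w_0\rho^{-1}, w_0\sigma^{-1}]_R$, a fact used already in the proof of \cref{Prop: B and bB}(a).

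The key combinatorial fact driving both steps is that whenever $i<j$ in $[n]$ are incomparable in $P_D$, the inequalities $y_i>y_j$ and $x_i<x_j$ must hold; this is immediate from the top-down-left-right enumeration used to define the labels. Applied to a hypothetical violating triple $(u,v,w)$ of \cref{Def: regular condition of P}, it produces a contradiction: if $v<u<w$, then the incomparabilities of $(u,v)$ and $(u,w)$ force $y_v>y_u>y_w$, violating the requirement $y_v\le y_w$ from $v \preceq_{P_D} w$; the case $w<u<v$ is symmetric. Hence $P_D$ is regular and $\SGR{P_D}=[\alpha,\beta]_R$ by \cref{Thm: BW equivalent}.

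To identify $\alpha$ and $\beta$, I would unwind the definitions using $\mapf(\mu)(l) = n+1-\mu^{-1}(l)$. The right-to-left-bottom-to-top reading direction of $\readingLR{\cdot}$ reverses the top-down-left-right enumeration $i\mapsto (x_i,y_i)$, so a direct computation gives that $\mapf(\readingLR{\sinkTD{D}})(l)$ is the $P_D$-label of the $l$th box visited in the sink filling order, and $\mapf(\readingLR{\sourceTD{D}})(l)$ is the $P_D$-label of the $l$th box visited in the source filling order. Since both sink and source orders refine the componentwise order on $D$, these one-line notations encode linear extensions of $P_D$, so the two permutations lie in $\SGR{P_D}$. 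An inversion count then pinpoints which endpoint they are: a pair $(a,b)$ with $a>b$ is a forced inversion of every $\gamma\in\SGR{P_D}$ precisely when $a\preceq_{P_D} b$, and it can never be an inversion when $b\preceq_{P_D} a$; in the remaining incomparable case, the observation above gives $y_a<y_b$ and $x_a>x_b$. In sink order (left-to-right along columns) $(x_a,y_a)$ then comes later, so $b$ precedes $a$ and no incomparable inversion arises; in source order (bottom-to-top along rows) $(x_a,y_a)$ comes first, so every incomparable inversion is realized. Therefore $\mapf(\readingLR{\sinkTD{D}})$ attains the minimum and $\mapf(\readingLR{\sourceTD{D}})$ the maximum of the inversion-containment order on $\SGR{P_D}$, forcing them to equal $\alpha$ and $\beta$ respectively.

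The main obstacle is the bookkeeping required to keep the three related enumerations of $D$ straight---the top-down-left-right one defining $P_D$, the bottom-to-top-left-to-right one used by $\sourceTD{D}$, and the left-to-right-bottom-to-top one used by $\sinkTD{D}$---together with the right-to-left-bottom-to-top reading direction of $\readingLR{\cdot}$. Once these translations are in place, both the regularity check and the inversion dichotomy reduce to short case analyses.
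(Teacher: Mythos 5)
Your proposal is correct and follows the same three-step strategy as the paper's proof: prove $P_D$ regular to invoke the Bj\"orner--Wachs characterization, show that the two images $\mapf(\readingLR{\sourceTD{D}})$ and $\mapf(\readingLR{\sinkTD{D}})$ lie in $\SGR{P_D}$, and identify them as the minimum and maximum of the interval. The only differences are in how the sub-steps are executed---the paper proves the identity $w_0\readingLR{\sinkTD{D}}^{-1}=\bfwD{D}$ (and its companion) by induction on $|D|$ and then argues extremality locally, checking that no single $s_i$-step moving in the wrong direction stays inside $\SGR{P_D}$, whereas you unwind the reading conventions directly and compare inversion sets globally using \cref{Eq: weak Bruhat right order2}; both routes are valid and essentially equivalent.
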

\begin{proof}
We first claim that $\SGR{\ourPoset{D}}$ is a right weak Bruhat interval in $\SG_n$.
Due to~\cref{Thm: BW equivalent}, this claim can be verified by showing that $\ourPoset{D}$ is regular,
i.e., $\ourPoset{D}$ has no triples $(u,v,w)$'s satisfying \cref{Def: regular condition of P}, which is immediate from the definition of $\ourPoset{D}$.

We next claim that $\mapf(\readingLR{\sourceTD{D}})$ and $\mapf(\readingLR{\sinkTD{D}})$ are in $\SGR{\ourPoset{D}}$.
To do this, let $\sfFD$ denote the filling on $D$ by placing $i$ $(1 \leq i \leq n)$ in the $i$th box when enumerating the boxes in $D$ along the rows from left to right, starting at the top row.
Then we consider the following two words obtained from $\sfFD$:
\begin{enumerate}[label = -, leftmargin = 3ex]
\item 
the word $\bfwD{D}$ obtained by reading the entries of $\sfFD$ along the columns from bottom to top starting with the leftmost column and 

\item 
the word $\bfwPD{D}$ obtained by reading the entries of $\sfFD$ along the rows from left to right starting with the lowermost row. 
\end{enumerate}
By definition, $\bfwD{D}$ and $\bfwPD{D}$ are in $\SGR{\ourPoset{D}}$.
We assert that the following equalities hold:
\[
w_0 \, \readingLR{\sinkTD{D}}^{-1} = \bfwD{D}
\quad \text{and} \quad
w_0 \, \readingLR{\sourceTD{D}}^{-1}= \bfwPD{D}.
\]
We will only prove the first equality using mathematical induction on $|D|$. 
The second equality can be obtained in the same manner as the first one.

It is clear that the equality $w_0 \, \readingLR{\sinkTD{D}}^{-1} = \bfwD{D}$ holds when $|D| = 1$. 
Given a positive integer $m$, assume that the equality holds for all elements in $\mathfrak{D}_m$.
Choose any $D$ in $\mathfrak{D}_{m+1}$.
Let $(x, y)$ be the uppermost box in the rightmost column of $D$.
We notice that the box $(x, y)$ is filled with $m+1$ in $\sinkTD{D}$.
Let $D':= D \setminus \{(x, y)\}$ and let $z$ be the number of boxes in $D$ that are weakly above and strictly left of $(x, y)$.
In addition, set 
\[
\readingUL{\sinkTD{D}} := \readingLR{\sinkTD{D}} \, w_0
\]
that is the word obtained by the entries of $\sinkTD{D}$ along the rows from left to right starting with the uppermost row.
Observe that if $\readingUL{\sinkTD{D'}} = v_1 v_2 \cdots v_m$, then
\[ 
\readingUL{\sinkTD{D}} = v_1 v_2 \cdots v_z \  m+1 \ v_{z+1} v_{z+2} \cdots v_{m}.
\]
This implies that
\[ 
\readingUL{\sinkTD{D}}^{-1}(i) =
\begin{cases}
\readingUL{\sinkTD{D'}}^{-1}(i) & \text{if $i < m+1$ and $\readingUL{\sinkTD{D'}}^{-1}(i) \le z$,} \\
\readingUL{\sinkTD{D'}}^{-1}(i) + 1 & \text{if $i < m+1$ and $\readingUL{\sinkTD{D'}}^{-1}(i) > z$,} \\
z+1 & \text{if $i = m+1$}
\end{cases}
\]
for $1 \le i \le m+1$.
By the induction hypothesis, we have $\readingUL{\sinkTD{D'}}^{-1} = \bfwD{D'}$. 
In addition, we have $\bfwD{D}(m+1) = z+1$ by the definition of $\sfFD$.
Therefore, we have
\[
w_0 \, \readingLR{\sinkTD{D}}^{-1} = \readingUL{\sinkTD{D}}^{-1} = \bfwD{D}.
\]

Finally, we claim that $ w_0 \, \readingLR{\sinkTD{D}}^{-1}$ is minimal and $w_0 \, \readingLR{\sourceTD{D}}^{-1}$ is maximal in $\SGR{\ourPoset{D}}$.
This claim can be verified by showing that 
\begin{enumerate}[label = {\rm (\roman*)}]
\item $\bfwD{D} s_i \notin \SGR{\ourPoset{D}}$ for all $i \in \Des{R}{\bfwD{D}}$, and
\item $\bfwPD{D} s_j \notin \SGR{\ourPoset{D}}$ for all $j \notin \Des{R}{\bfwPD{D}}$.
\end{enumerate}
We only prove (i) since (ii) can be proved in the same way.
Choose an arbitrary $i \in \Des{R}{\bfwD{D}}$.
Combining the inequality $\bfwD{D}(i) > \bfwD{D}(i+1)$ with the \cref{eq: def of P_D}, we have 
\[
\bfwD{D}(i) \preceq_{\ourPoset{D}} \bfwD{D}(i+1).
\]
Therefore, $\bfwD{D} s_i \notin \SGR{\ourPoset{D}}$, as desired.
\end{proof}

Let $D \in \mathfrak{D}_n$.
By the definitions of $\sourceTD{D}$ and ${\tt read}_{\mathsf{ST}}$, it is straightforward to see that $\readingLR{\sourceTD{D}}=w_0((r_1,r_2, \ldots,r_{k})^{\rmc})$, where $r_i\,\,(1\le i \le k)$ is the number of boxes in the $i$th row of $D$ (from bottom to top) and $k$ is the number of rows of $D$.
Hence \cref{Lem: SGTPD = w0readSink and w0readSource to the right} implies that $\SGR{\ourPoset{D}}$ is of the form 
$\mapf([w_0(\alpha), \rho]_L)$ for some $\alpha \models n$ and $ w_0(\alpha) \preceq_L \rho$.
Furthermore, the converse of this implication also holds.
In the following, given $\alpha \models n$ and $\rho \in \SG_n$ with $w_0(\alpha) \preceq_L \rho$, 
we will construct a diagram $D \in \mathfrak{D}_n$ such that 
$\SGR{\ourPoset{D}} = \mapf([w_0(\alpha),\rho]_L)$. 
The subsequent algorithm plays a crucial role in this construction. 

\begin{algorithm}\label{Algo: Construction of  D_alpha_rho}
Let $\alpha \models n$ and $w_0(\alpha) \preceq_L \rho$.

\begin{enumerate}[label = {\it Step \arabic*.}]
\item
Let $\set(\alpha^\rmc) = \{z_1,z_2,\ldots,z_l\}$, and $z_0 := 0$ and $z_{l+1}:=n$.
For $1 \leq j \leq l+1$, 
let 
\[
R_j(\alpha;\rho) := \{\rho(r) \mid z_{j-1}+1 \leq r \leq z_j\}.
\]

\item 
Let $e:= |\Des{L}{\rho}|$ and let $\Des{L}{\rho} = \{k_1 < k_2 < \cdots < k_e\}$, $k_0:=0$, and $k_{e+1}:=n$.
Then, for $i = 1,2,\ldots,e+1$, set
\[
C_i(\alpha;\rho) := \{c \mid k_{i-1}+1 \leq c \leq k_i\}.
\]

\item 
Let 
\[
D := \{(i,j) \in \N^2 \ \mid R_j(\alpha;\rho) \cap C_i(\alpha;\rho) \neq \emptyset \}.
\]
Return $D$.
\end{enumerate}  
\end{algorithm}

We denote by $D_{\alpha;\rho}$ the set $D$ obtained from \cref{Algo: Construction of D_alpha_rho}.

\begin{example}
Let $\alpha = (1,1,2,2,1,1,1) \models 9$ and $\rho = 8 \ 4 \ 1 \ 5 \ 3 \ 9 \ 7 \ 6 \ 2$.
We start by observing that $w_0(\alpha) = 3 \ 2 \ 1 \ 5 \ 4 \ 9 \ 8 \ 7 \ 6$ and $w_0(\alpha) \preceq_L \rho$, indicating that we can proceed with the construction of the diagram $D_{\alpha;\rho}$ using \cref{Algo: Construction of D_alpha_rho}.

Since $\set(\alpha^\rmc) = \{3,5\}$, {\it Step 1} yields that 
\[
R_1(\alpha;\rho) = \{1,4,8\}, \quad R_2(\alpha;\rho) = \{3,5\} \quad \text{and} \quad R_3(\alpha;\rho) = \{2,6,7,9\}.
\]
Since $\Des{L}{\rho} =\{2,3,6,7\}$, {\it Step 2} yields that
\begin{align*}
&C_{1}(\alpha;\rho) = \{1,2\}, \quad C_{2}(\alpha;\rho) = \{3\}, \quad C_3(\alpha;\rho) = \{4,5,6\}, \\
&C_{4}(\alpha;\rho) = \{7\} \quad \text{and} \quad  C_{5}(\alpha;\rho) = \{8,9\}.
\end{align*}
Finally, {\it Step 3} gives us 
\[
D_{\alpha;\rho} = \{(1,1),(3,1),(5,1),(2,2),(3,2),(1,3),(3,3),(4,3),(5,3)\}.
\]
\end{example}

\begin{theorem}\label{Thm: Diagram Dalpharho}
Given $\alpha \models n$ and $\rho \in \SG_n$ with $w_0(\alpha) \preceq_L \rho$, the diagram $D_{\alpha;\rho}$ obtained from \cref{Algo: Construction of D_alpha_rho} satisfies that 
$\SGR{\ourPoset{D_{\alpha;\rho} }} = \mapf([w_0(\alpha),\rho]_L)$.
\end{theorem}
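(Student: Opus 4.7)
The plan is to invoke \cref{Lem: SGTPD = w0readSink and w0readSource to the right}, which reduces the assertion $\SGR{\ourPoset{D_{\alpha;\rho}}} = \mapf([w_0(\alpha), \rho]_L)$ to verifying the two equalities $\readingLR{\sourceTD{D_{\alpha;\rho}}} = w_0(\alpha)$ and $\readingLR{\sinkTD{D_{\alpha;\rho}}} = \rho$. Write $B_j := \{z_{j-1}+1, \ldots, z_j\}$ for $1 \le j \le l+1$, so that $R_j = \rho(B_j)$ and $|B_j| = (\alpha^\rmc)_j$; the whole argument will rest on a single structural consequence of $w_0(\alpha) \preceq_L \rho$, namely that $\rho$ is strictly decreasing on each $B_j$.

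For this key lemma I would translate $w_0(\alpha)\preceq_L\rho$ into $w_0(\alpha)^{-1}\preceq_R\rho^{-1}$ and invoke the inversion description~\cref{Eq: weak Bruhat right order2} to get $\inv{w_0(\alpha)^{-1}}\subseteq\inv{\rho^{-1}}$. Since $w_0(\alpha)$ is an involution that reverses each block $B_j$, one has $\inv{w_0(\alpha)^{-1}} = \inv{w_0(\alpha)} = \bigsqcup_j \{(a,b) : a>b,\ a,b\in B_j\}$, while a direct unpacking yields $\inv{\rho^{-1}} = \{(p,q) : p>q,\ \rho(p)<\rho(q)\}$; the required containment is precisely the claim that $\rho|_{B_j}$ is strictly decreasing. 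Combined with the defining property of columns (the values in a single $C_i$ are consecutive integers with no left descent of $\rho$ between them, hence appear in $\rho$ in strictly increasing position order), this forces distinct elements of $R_j$ to lie in distinct columns $C_i$, and dually distinct elements of $C_i$ to lie in distinct rows $R_j$. Consequently row $j$ of $D_{\alpha;\rho}$ contains exactly $|R_j| = (\alpha^\rmc)_j$ boxes and column $i$ contains exactly $|C_i|$ boxes.

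With these structural facts in hand, both readings follow mechanically. For $\readingLR{\sinkTD{D_{\alpha;\rho}}}$, filling column $i$ from the bottom uses the values $k_{i-1}+1,\ldots,k_i$, which are precisely $C_i$; the bijection sending $c \in C_i$ to the row $j$ with $c \in R_j$ is order-preserving (within $C_i$ positions in $\rho$ increase with values, and the blocks $B_1 < B_2 < \cdots$ are totally ordered on positions), so box $(i,j)$ receives the unique element of $C_i \cap R_j$. Reading row $j$ right-to-left therefore enumerates $R_j$ in decreasing column order, equivalently decreasing value order, which by the key lemma is $\rho(z_{j-1}+1), \rho(z_{j-1}+2), \ldots, \rho(z_j)$; concatenating over $j$ recovers $\rho$. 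For $\readingLR{\sourceTD{D_{\alpha;\rho}}}$, row $j$ is filled left-to-right with $B_j$ in increasing order, so the right-to-left reading produces the reversed block $z_j, z_j-1, \ldots, z_{j-1}+1$, and concatenating across $j$ yields exactly $w_0(\alpha)$.

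The main obstacle is the key lemma itself; once the strict decrease of $\rho|_{B_j}$ is established, the box counts and the identifications of the two readings become routine bookkeeping. The lemma requires carefully translating the left weak Bruhat inequality through the $\sigma \mapsto \sigma^{-1}$ symmetry and exploiting the involutivity of $w_0(\alpha)$.
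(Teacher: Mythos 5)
Your proof is correct and follows the same route as the paper: reduce the claim to the two reading identities $\readingLR{\sourceTD{D_{\alpha;\rho}}} = w_0(\alpha)$ and $\readingLR{\sinkTD{D_{\alpha;\rho}}} = \rho$ and then invoke \cref{Lem: SGTPD = w0readSink and w0readSource to the right}. The paper's proof simply asserts that $D_{\alpha;\rho}$ was ``designed to satisfy'' these identities; your argument supplies the omitted verification, with the key observation that $w_0(\alpha)\preceq_L\rho$ forces $\rho$ to be strictly decreasing on each block $B_j$, which is exactly what makes the box counts and both readings come out right.
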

\begin{proof}
Note that $D_{\alpha;\rho}$ is designed to satisfy
\[
D_{\alpha;\rho} \in \mathfrak{D}_n,
\quad 
\readingLR{\sourceTD{D_{\alpha;\rho}}} = w_0(\alpha), 
\quad \text{and} \quad \readingLR{\sinkTD{D_{\alpha;\rho}}} = \rho.
\]
Therefore, the assertion follows from \cref{Lem: SGTPD = w0readSink and w0readSource to the right}. 
\end{proof}

By merging the results of \cref{Lem: SGTPD = w0readSink and w0readSource to the right} and \cref{Thm: Diagram Dalpharho}, we obtain the equality
\[\left\{ \SGR{\ourPoset{D}} \mid D \in \mathfrak{D}_n \right\} = \left\{ \mapf([w_0(\alpha), \rho]_L) \mid \alpha \models n,  w_0(\alpha) \preceq_L \rho \right\}.\]

\subsection{The Hasse diagram of \texorpdfstring{$\posetFromAlgo{V}$}{Lg} for \texorpdfstring{$V \in \setIndSummandY$}{Lg}}
\label{Sec: The poset description of indecomposable direct summands}
Let us begin by introducing the necessary notation. Throughout this section, we fix a composition $\alpha = (\alpha_1,\ldots, \alpha_{\ell(\alpha)})$ of $n$.
We define the \emph{composition diagram} $\tcd(\alpha)$ of $\alpha$ as a left-justified array of $n$ boxes where the $i$th row from the top has $\alpha_i$ boxes for $1 \leq i \leq \ell(\alpha)$.
A box in $\tcd(\alpha)$ is said to be in the $i$th row if it is in the $i$th row from the top and in the $j$th column if it is in the $j$th column from the left.
We use $(i,j)$ to denote the box in the $i$th row and $j$th column.
Given a filling $\tau$ of $\tcd(\alpha)$ and $(i,j) \in \tcd(\alpha)$, we denote by $\tau_{i,j}$ the entry at $(i,j)$ in $\tau$.

\subsubsection{\texorpdfstring{$\mbalpha = \mDIF{\alpha}$ or $\mESF{\alpha}$}{Lg}} 
A \emph{standard immaculate tableau of shape $\alpha$} is a filling $\calT$ of $\tcd(\alpha)$ with entries in $\{1,2,\ldots,n\}$ such that the entries are all distinct, the entries in each row increase from left to right, and the entries in the first column increase from top to bottom.
A \emph{standard extended tableau of shape $\alpha$} is a filling $\sfT$ of the composition diagram $\tcd(\alpha)$ with $\{1,2,\ldots,n\}$ such that the entries are all distinct, the entries in each row increase from left to right, and the entries in each column increase from top to bottom.
Let $\SIT(\alpha)$ be the set of all standard immaculate tableaux of shape $\alpha$ and $\SET(\alpha)$ the set of all standard extended tableaux of shape $\alpha$.
The $H_n(0)$-module $\mDIF{\alpha}$ was introduced by Berg--Bergeron--Saliola--Serrano--Zabrocki~\cite{15BBSSZ} by establishing a suitable $H_n(0)$-action on the $\C$-span of $\SIT(\alpha)$.
Similarly, the $H_n(0)$-module $\mESF{\alpha}$ was introduced by Searles \cite{19Searles} by establishing an $H_n(0)$-action on the $\C$-span of $\SET(\alpha)$.
Indeed, both $\mDIF{\alpha}$ and $\mESF{\alpha}$ are well-known to be indecomposable.

To introduce our result, we require the following tableaux of $\tcd(\alpha)$:
\begin{enumerate}[label = $\bullet$, leftmargin=4ex]
\item 
$\sourceSIT{\alpha}$ is the $\SIT$ of shape $\alpha$ obtained by filling $\tcd(\alpha)$ with $1,2,\ldots,n$ from left to right and from top to bottom.

\item 
$\sinkSIT{\alpha}$ is the $\SIT$ of shape $\alpha$ obtained by filling $\tcd(\alpha)$ in the following steps:
\begin{enumerate}[label = {\rm (\roman*)}, leftmargin=3ex]
\item Fill the first column with entries $1,2,\ldots, \ell(\alpha)$ from top to bottom.
\item Fill the remaining boxes with entries $\ell(\alpha) + 1, \ell(\alpha) + 2, \ldots, n$ from left to right from bottom to top.
\end{enumerate}

\item 
$\sfT_\alpha$ is the $\SET$ of shape $\alpha$ obtained by filling $\tcd(\alpha)$ with $1,2,\ldots,n$ from left to right and from top to bottom.

\item 
$\sfT'_\alpha$ is the $\SET$ of shape $\alpha$ obtained by filling $\tcd(\alpha)$ with the entries $1,2,\ldots, n$ from top to bottom and from left to right.
\end{enumerate}
Given any filling $T$ of a composition diagram, let $\rmread(T)$ be the word obtained from $T$ by reading the entries from right to left starting with the top row.

\begin{lemma}{\rm (\cite[Theorem 5]{22JKLO})}
\label{Lemma: V and X are interval}
For every $\alpha \models n$, we have the following $H_n(0)$-module isomorphisms.
\begin{enumerate}[label = {\rm (\alph*)},itemsep=2pt]
    \item $\mDIF{\alpha} \cong \sfB_L(\sigma_{\mDIF{\alpha}},\rho_{\mDIF{\alpha}})$,
where $\sigma_{\mDIF{\alpha}} = \rmread(\sourceSIT{\alpha})$ and $
\rho_{\mDIF{\alpha}} = \rmread(\sinkSIT{\alpha})$.

\item $\mESF{\alpha} \cong \sfB_L(\sigma_{\mESF{\alpha}},\rho_{\mESF{\alpha}})$, 
where $\sigma_{\mESF{\alpha}} = \rmread(\sourceSET{\alpha})$ and $\rho_{\mESF{\alpha}}
= \rmread(\sinkSET{\alpha})$.
\end{enumerate}
\end{lemma}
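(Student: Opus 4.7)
The plan is to establish an $H_n(0)$-module isomorphism via the reading word map $\rmread$, viewed as a linear map from the tableau basis to the interval basis. For part (a), the underlying space of $\mDIF{\alpha}$ is spanned by $\SIT(\alpha)$, while $\sfB_L(\sigma_{\mDIF{\alpha}},\rho_{\mDIF{\alpha}})$ is spanned by $[\sigma_{\mDIF{\alpha}},\rho_{\mDIF{\alpha}}]_L$; I would first show that $\rmread$ restricts to a bijection between these two sets, and then verify that this bijection intertwines the respective $H_n(0)$-actions. Part (b) proceeds in parallel with $\SET(\alpha)$ in place of $\SIT(\alpha)$.

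Injectivity of $\rmread$ is immediate: since $\alpha$ prescribes the row lengths, the one-line notation of $\rmread(\calT)$ recovers $\calT$ by cutting into blocks of sizes $\alpha_1, \alpha_2, \ldots, \alpha_{\ell(\alpha)}$ and reversing each block. For the image I would rely on the observation that $i \in \Des{L}{\rmread(\calT)}$ if and only if $i+1$ sits in a strictly lower row of $\calT$ than $i$. Using this, one checks that $\sourceSIT{\alpha}$, the row-by-row filling, realizes the reading word with the fewest possible left descents among reading words of SITs, while $\sinkSIT{\alpha}$, built by stacking $1,\ldots,\ell(\alpha)$ in the first column and then filling the remainder bottom-up left-to-right, realizes the maximal left descent configuration. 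An analogous verification handles $\sourceSET{\alpha}$ and $\sinkSET{\alpha}$.

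The heart of the argument is surjectivity onto the full interval, which I would prove by induction along the covering relation $\preceq_L^c$. Starting from $\rmread(\sourceSIT{\alpha})$, each cover $\gamma \preceq_L^c s_i \gamma$ corresponds at the tableau level to transposing the row indices of $i$ and $i+1$, and this transposition preserves the SIT (resp. SET) property exactly when it does not violate first-column-strictness (resp. column-strictness in any column). A direct analysis should show that the covers blocked by this constraint are precisely those that would push $\gamma$ beyond $\rmread(\sinkSIT{\alpha})$ (resp. $\rmread(\sinkSET{\alpha})$) in the weak order, so the image is closed under all surviving covers and therefore coincides with the whole interval. The intertwining check is then mechanical: the $H_n(0)$-actions defined in \cite{15BBSSZ} for $\mDIF{\alpha}$ and in \cite{19Searles} for $\mESF{\alpha}$ reproduce the three cases of \cref{left WBI} verbatim once the left descent set is read off from the tableau as above, namely $\pi_i$ fixes $\calT$ when $i\in\Des{L}{\rmread(\calT)}$, annihilates $\calT$ when the $i\leftrightarrow i+1$ swap exits the tableau class, and performs the swap otherwise.

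The main obstacle will be the SET case of the surjectivity step, because column-strictness must hold in every column rather than only in the first. Carrying this out requires a careful matching between the forbidden $i\leftrightarrow i+1$ swaps in a standard extended tableau and the inversions that separate $[\sigma_{\mESF{\alpha}},\rho_{\mESF{\alpha}}]_L$ from its strictly larger supersets in $\SG_n$; in particular, one must verify explicitly that $\sinkSET{\alpha}$, the column-by-column filling, really is maximal, which is not as transparent as in the immaculate case where only the first column imposes a nontrivial constraint.
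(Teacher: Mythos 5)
This lemma is quoted from \cite[Theorem 5]{22JKLO} and carries no proof in the present paper, so there is no in-paper argument to compare against directly; what follows assesses your plan on its own merits. The high-level strategy---view $\rmread$ as a bijection from the tableau basis onto a left weak Bruhat interval and then match the three clauses of the action in \cref{left WBI}---is the natural one and is surely what the cited reference does.

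The central combinatorial observation, however, is reversed. Since $\rmread$ reads rows right-to-left from the top, if $i$ and $i+1$ lie in the same row of $\calT$ then $i+1$ (being larger, hence farther right) is read first, so $i+1$ precedes $i$ in the one-line word of $\rmread(\calT)$ and therefore $i\in\Des{L}{\rmread(\calT)}$. More generally, $i\in\Des{L}{\rmread(\calT)}$ if and only if $i+1$ is in the same row as $i$ or in a strictly \emph{higher} row; equivalently, $i\notin\Des{L}{\rmread(\calT)}$ exactly when $i+1$ sits strictly lower. Your observation is the negation of this, and the error propagates: the covers $\gamma\preceq_L^c s_i\gamma$ exist only for $i\notin\Des{L}{\gamma}$, i.e.\ when $i+1$ is strictly below $i$ (which is precisely what guarantees the value-swap never collides with row-increasingness), and the intertwining clause ``$\pi_i$ fixes $\calT$ when $i\in\Des{L}{\rmread(\calT)}$'' must be matched against the tableau-level action using the correct, not the flipped, reading of $\Des{L}$. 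A second and independent problem is your characterization of $\sourceSIT{\alpha}$ and $\sinkSIT{\alpha}$ as having, respectively, the fewest and the most left descents among SIT reading words. This is false: for $\alpha=(2,3)$ the four SITs have reading words $21543$, $31542$, $41532$, $51432$ of Coxeter lengths $4,5,6,7$, while $|\Des{L}{\cdot}|$ equals $3,2,2,3$ --- the source and the sink tie for the \emph{largest} descent count and the two intermediate tableaux have fewer. What distinguishes the source and sink is minimal versus maximal length (equivalently, being the unique bottom and top of the image in $(\SG_n,\preceq_L)$), not descent count; the clean identification $\rmread(\sourceSIT{\alpha})=w_0(\alpha^\rmc)$, a parabolic longest element, is the statement the present paper actually uses in the proof of \cref{Thm: posets for V and X}, and the descent-count heuristic should be dropped.
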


Using \cref{Lemma: V and X are interval}, we can derive the following theorem. 

\begin{theorem}\label{Thm: posets for V and X}
For each $\alpha \models n$, we have the following.
\begin{enumerate}[label = {\rm (\alph*)},itemsep=3pt]
\item $P_{\mDIF{\alpha}} = P_{D_{\alpha^\rmc; \rmread(\sinkSIT{\alpha})}}$ and $P_{\mESF{\alpha}} = P_{D_{\alpha^\rmc; \rmread(\sfT'_\alpha)}}$.

\item For $1 \le i \le \ell(\alpha)$, let $k_i = \sum_{i+1 \le j \le \ell(\alpha)}(\alpha_j-1)$. 
Then 
\begin{align*}
& D_{\alpha^\rmc; \rmread(\sinkSIT{\alpha})} =
\{(1,i) \mid (i,1) \in \tcd(\alpha)\} \cup \{(j+k_i,i) \mid (i,j) \in \tcd(\alpha) \text{ with } j \geq 2\} \text{ and } \\ 
& D_{\alpha^\rmc; \rmread(\sfT'_\alpha)} = \{(i,j) \mid (j,i) \in \tcd(\alpha)\}.
\end{align*}
\end{enumerate}
\end{theorem}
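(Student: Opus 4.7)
The plan is to deduce part (a) from \cref{Thm: Diagram Dalpharho} combined with \cref{Lemma: V and X are interval}, and then to derive the explicit form in part (b) by tracing \cref{Algo: Construction of  D_alpha_rho} on the two specific permutations $\rho = \rmread(\sinkSIT{\alpha})$ and $\rho = \rmread(\sfT'_\alpha)$.

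For part (a), the key point is the identity $\sigma_{\mDIF{\alpha}} = \sigma_{\mESF{\alpha}} = w_0(\alpha^{\rmc})$. Both $\sourceSIT{\alpha}$ and $\sourceSET{\alpha}$ place the entries $1, 2, \ldots, n$ in $\tcd(\alpha)$ along the rows from left to right starting with the top row, so the entry at $(i, j)$ equals $\alpha_1 + \cdots + \alpha_{i-1} + j$. Reading these right to left from the top reverses each row, which is precisely the one-line notation of the longest element of the parabolic subgroup $\SG_{\set(\alpha^{\rmc})}$. Combining this with \cref{Lemma: V and X are interval} yields $I_{\mDIF{\alpha}} = \mapf([w_0(\alpha^{\rmc}), \rmread(\sinkSIT{\alpha})]_L)$, and by \cref{Thm: Diagram Dalpharho} this coincides with $\SGR{\ourPoset{D_{\alpha^{\rmc}; \rmread(\sinkSIT{\alpha})}}}$. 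Since a right weak Bruhat interval uniquely determines its regular poset (by \cref{Thm: BW equivalent}), we conclude $P_{\mDIF{\alpha}} = P_{D_{\alpha^{\rmc}; \rmread(\sinkSIT{\alpha})}}$, and the same reasoning with $\sfT'_\alpha$ in place of $\sinkSIT{\alpha}$ handles $\mESF{\alpha}$.

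For part (b), I would run \cref{Algo: Construction of  D_alpha_rho} with input composition $\alpha^{\rmc}$. Since $\set((\alpha^{\rmc})^{\rmc}) = \set(\alpha)$, the sets $R_j$ coincide with the row-$j$ entries of the underlying tableau. The crucial input is $\Des{L}{\rho}$. For $\rho = \rmread(\sfT'_\alpha)$, entries lying in a common column of $\sfT'_\alpha$ have strictly increasing positions in $\rho^{-1}$ and so contribute no descents, while at each column boundary the bottommost row with $\alpha_j \geq c$ is weakly below the topmost row with $\alpha_j \geq c+1$, which forces a descent. Thus $\Des{L}{\rho}$ is exactly the set of partial column lengths $N_2, \ldots, N_{\alphamax}$, making each $C_i$ equal to the column-$i$ entries of $\sfT'_\alpha$; then $R_j \cap C_i \neq \emptyset$ iff $(j, i) \in \tcd(\alpha)$, yielding the transpose-shape formula. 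For $\rho = \rmread(\sinkSIT{\alpha})$, a case analysis on whether $i$ and $i+1$ lie in the same row of $\sinkSIT{\alpha}$ shows $\Des{L}{\rho} = \{\ell(\alpha), \ell(\alpha)+1, \ldots, n-1\}$, so $C_1 = \{1, \ldots, \ell(\alpha)\}$ and $C_i = \{\ell(\alpha) + i - 1\}$ for $i \geq 2$. Matching these against the explicit description $R_j = \{j\} \cup \{\ell(\alpha) + k_j + 1, \ldots, \ell(\alpha) + k_{j-1}\}$ places $(i, j)$ in $D$ precisely when $i = 1$ or $k_j + 2 \leq i \leq k_j + \alpha_j$, which is the formula claimed.

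The main obstacle is the descent-set computation for $\rmread(\sinkSIT{\alpha})$: when $i$ is the leftmost non-first-column entry of row $j$, its successor $i+1$ lies in a different row $j'$, namely the next row strictly above $j$ with $\alpha_{j'} \geq 2$ (skipping any intermediate rows with $\alpha_l = 1$). Verifying that this cross-row step still produces a descent requires carefully comparing the positions of $i$ and $i+1$ in $\rho$ using $\alpha_1 + \cdots + \alpha_{j'} \leq \alpha_1 + \cdots + \alpha_{j-1}$. Once this edge case is resolved, all remaining intersections in \cref{Algo: Construction of  D_alpha_rho} reduce to routine bookkeeping with the offsets $k_j$.
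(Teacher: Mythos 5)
Your proposal follows essentially the same route as the paper: part (a) combines the lemma identifying $\mDIF{\alpha}$ and $\mESF{\alpha}$ as left weak Bruhat interval modules with the diagram-construction theorem via the identity $w_0(\alpha^\rmc) = \rmread(\sourceSIT{\alpha}) = \rmread(\sourceSET{\alpha})$, and part (b) is a direct trace of the algorithm, which is exactly what the paper's terse ``directly compute $R_j$ and $C_i$'' amounts to. The one small slip is terminological: in your analysis of $\rmread(\sinkSIT{\alpha})$ the cross-row step occurs when $i$ is the \emph{rightmost} (not leftmost) non-first-column entry of row $j$, with $i+1$ then the leftmost non-first-column entry of the nearest row above having at least two boxes, but your position comparison is unaffected and the computed descent set $\{\ell(\alpha),\ldots,n-1\}$ and final diagram are correct.
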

\begin{proof}
(a) Recall that 
$P_{\mDIF{\alpha}}$ is the abbreviation of $P_{I_{\mDIF{\alpha}}}$. 
\cref{Lemma: V and X are interval} says that  
\[
I_{\mDIF{\alpha}}=\mapf([\rmread(\sourceSIT{\alpha}),\rmread(\sinkSIT{\alpha})]_L).
\]
On the other hand, \cref{Thm: Diagram Dalpharho} says that 
\[
\SGR{P_{D_{\alpha^\rmc; \rmread(\sinkSIT{\alpha})}}}=\mapf([w_0(\alpha^\rmc),\rmread(\sinkSIT{\alpha})]_L).
\]
In the proof of \cref{Lemma: V and X are interval}, it was pointed out that 
$w_0(\alpha^\rmc)=\rmread(\sourceSIT{\alpha})$.
Consequently, $I_{\mDIF{\alpha}}=\SGR{P_{D_{\alpha^\rmc; \rmread(\sinkSIT{\alpha})}}}$, which provides the first equality in the assertion.

The second equality in the assertion can be derived using the same approach as described above.

(b) Apply \cref{Algo: Construction of  D_alpha_rho} to the pair $(\alpha^\rmc, \rmread(\sinkSIT{\alpha}))$ instead of $(\alpha,\rho)$.
The first equality can be obtained by directly computing $R_j(\alpha^\rmc, \rmread(\sinkSIT{\alpha}))$ and $C_i(\alpha^\rmc, \rmread(\sinkSIT{\alpha}))$. 
The second equality can also be derived using the same approach.
\end{proof}

\begin{example}
For $\alpha = (3,2,4) \models 9$, 
consider the following tableaux and their corresponding reading words: 
\begin{align*}
\sinkSIT{\alpha} & = 
\begin{ytableau}
1 & 8 & 9 \\
2 & 7 \\ 
3 & 4 & 5 & 6
\end{ytableau} 
\qquad 
\rmread(\sinkSIT{\alpha}) = 9 \ 8 \ 1 \ 7 \ 2 \ 6 \ 5 \ 4 \ 3 \\[3mm]
\sinkSET{\alpha} & = 
\begin{ytableau}
1 & 4 & 7 \\
2 & 5 \\ 
3 & 6 & 8 & 9
\end{ytableau}
\qquad 
\rmread(\sinkSET{\alpha}) = 7 \ 4 \ 1 \ 5 \ 2 \ 9 \ 8 \ 6 \ 3
\end{align*}
Since $\alpha^\rmc = (1,1,2,2,1,1,1)$, one observes that $w_0(\alpha^\rmc) = 3 \ 2 \ 1 \ 5 \ 4 \ 9 \ 8 \ 7 \ 6$ and $ \rmread(\sourceSIT{\alpha}) = \rmread(\sourceSET{\alpha}) = w_0(\alpha^\rmc)$.
Applying \cref{Algo: Construction of  D_alpha_rho} to $(\alpha^\rmc,\rmread(\sinkSIT{\alpha}))$ or $(\alpha^\rmc,\rmread(\sinkSET{\alpha}))$ yields that 
\begin{align*}
D_{\alpha^\rmc; \rmread(\sinkSIT{\alpha})} &=
\{(1,1),(6,1),(7,1),(1,2),(5,2),(1,3),(2,3),(3,3),(4,3)\}
, \\ 
D_{\alpha^\rmc; \rmread(\sfT'_\alpha)} &= \{(1,1),(2,1),(3,1),(1,2),(2,2),(1,3),(2,3),(3,3),(4,3)\}.
\end{align*}
Therefore, the Hasse diagrams of the desired posets are 
\[
\def \ccc {3.5mm}
\def \hhh {1.2em}
\def \vvv {1.2em}
\begin{tikzpicture}[baseline = 14mm]
\node[left] at (\hhh*-0.7,\vvv*3) {\small $P_{\mDIF{\alpha}} =$};
\node[shape=circle,draw,minimum size=\ccc, inner sep=0pt] at (\hhh*3,\vvv*5) (D4) {\tiny $4$};
\node[shape=circle,draw,minimum size=\ccc, inner sep=0pt] at (\hhh*2,\vvv*4) (D3) {\tiny $3$};
\node[shape=circle,draw,minimum size=\ccc, inner sep=0pt] at (\hhh*1,\vvv*3) (D2) {\tiny $2$};
\node[shape=circle,draw,minimum size=\ccc, inner sep=0pt] at (\hhh*0,\vvv*2) (D1) {\tiny $1$};
\node[shape=circle,draw,minimum size=\ccc, inner sep=0pt] at (\hhh*1,\vvv*1) (D5) {\tiny $5$};
\node[shape=circle,draw,minimum size=\ccc, inner sep=0pt] at (\hhh*5,\vvv*5) (D6) {\tiny $6$};
\node[shape=circle,draw,minimum size=\ccc, inner sep=0pt] at (\hhh*2,\vvv*0) (D7) {\tiny $7$};
\node[shape=circle,draw,minimum size=\ccc, inner sep=0pt] at (\hhh*7,\vvv*5) (D8) {\tiny $8$};
\node[shape=circle,draw,minimum size=\ccc, inner sep=0pt] at (\hhh*8,\vvv*6) (D9) {\tiny $9$};

\draw (D1) -- (D2) -- (D3) -- (D4);
\draw (D5) -- (D6);
\draw (D7) -- (D8) -- (D9);
\draw[line width=\lw] (D1) -- (D5) -- (D7);
\end{tikzpicture} 
\quad \text{and} \quad  
\def \ccc {3.5mm}
\def \hhh {1.2em}
\def \vvv {1.2em}
\begin{tikzpicture}[baseline = 14mm]
\node[left] at (\hhh*-0.7,\vvv*3) {\small $P_{\mESF{\alpha}} =$};
\node[shape=circle,draw,minimum size=\ccc, inner sep=0pt] at (\hhh*3,\vvv*5) (D4) {\tiny $4$};
\node[shape=circle,draw,minimum size=\ccc, inner sep=0pt] at (\hhh*2,\vvv*4) (D3) {\tiny $3$};
\node[shape=circle,draw,minimum size=\ccc, inner sep=0pt] at (\hhh*1,\vvv*3) (D2) {\tiny $2$};
\node[shape=circle,draw,minimum size=\ccc, inner sep=0pt] at (\hhh*0,\vvv*2) (D1) {\tiny $1$};
\node[shape=circle,draw,minimum size=\ccc, inner sep=0pt] at (\hhh*1,\vvv*1) (D5) {\tiny $5$};
\node[shape=circle,draw,minimum size=\ccc, inner sep=0pt] at (\hhh*2,\vvv*2) (D6) {\tiny $6$};
\node[shape=circle,draw,minimum size=\ccc, inner sep=0pt] at (\hhh*4,\vvv*2) (D9) {\tiny $9$};
\node[shape=circle,draw,minimum size=\ccc, inner sep=0pt] at (\hhh*3,\vvv*1) (D8) {\tiny $8$};
\node[shape=circle,draw,minimum size=\ccc, inner sep=0pt] at (\hhh*2,\vvv*0) (D7) {\tiny $7$};

\draw (D1) -- (D2) -- (D3) -- (D4);
\draw (D5) -- (D6);
\draw (D7) -- (D8) -- (D9);
\draw[line width=\lw] (D1) -- (D5) -- (D7);
\draw[line width=\lw] (D2) -- (D6) -- (D8);
\draw[line width=\lw] (D3) -- (D9);
\end{tikzpicture} \ \ .
\]
\end{example}

\subsubsection{\texorpdfstring{$\mbalpha = \mQS{\alpha}$}{Lg}} \label{explcit algorithm for S}
A \emph{standard reverse composition tableau} ($\SRCT$) of shape $\alpha$ is a filling $\tau$ of $\tcd(\alpha)$ with entries in $\{1,2,\ldots,n\}$ such that the following conditions hold:
\begin{enumerate}[label = {\rm (\arabic*)}]
\item The entries are all distinct.

\item The entries in each row are decreasing when read from left to right.

\item The entries in the first column are increasing when read from top to bottom.

\item (The triple condition) If $i<j$ and $\tau_{i,k} > \tau_{j,k+1}$, then $(i,k+1) \in \tcd(\alpha)$ and $\tau_{i,k+1} > \tau_{j,k+1}$.
\end{enumerate}
Let $\SRCT(\alpha)$ be the set of SRCTx of shape $\alpha$.
For $\tau \in \SRCT(\alpha)$ and $1 \leq i \leq n-1$, we say that $i$ is a {\em descent} of $\tau$ if $i+1$ appears weakly right of $i$ in $\tau$.
Denote by $\Des{}{\tau}$ the set of descents in $\tau$.
For $i,j \in [n]$ with $i < j$, we say that $i$ {\em attacks} $j$ in $\tau$ if either
\begin{equation}\label{Def: attacking condition}
\begin{aligned}
&\text{$i$ and $j$ are in the same column in $\tau$, or}\\
&\text{$i$ and $j$ are in adjacent columns in $\tau$, with $j$ positioned lower-right of $i$.}
\end{aligned}
\end{equation}
In the case where $i$ is a descent of $\tau$ and it attacks (resp. does not attack) $i+1$ in $\tau$, we say that $i$ is an {\em attacking descent} (resp. a {\em nonattacking descent}) of $\tau$.

For $1 \le i \le \alphamax$, we define the \emph{$i$th column word} $\sfc^i(\tau)$ of $\tau$ to be the word obtained from $\tab$ by reading the entries in the $i$th column from top to bottom.
The \emph{standardized $i$th column word} of $\tau$, denoted by $\mathrm{st}_i(\tau)$, is the word $p_1 p_2 \cdots p_{l_i}$ uniquely determined by the conditions 
\begin{enumerate}[label = {\rm (\roman*)}]
\item $\{p_1, p_2, \ldots, p_{l_i}\} = \{1,2, \ldots, l_i\}$ and
\item 
$p_j > p_{j'}$ if and only if $\sfc^i(\tau)_{j} > \sfc^i(\tau)_{j'}$ for all $1 \le j < j' \le l_i$,
\end{enumerate}
where $\sfc^i(\tau)_{j}$ is the $j$th entry of $\sfc^i(\tau)$ and $l_i$ is the length of the word $\sfc^i(\tau)$.
Let $\tau_1,\tau_2 \in \SRCT(\alpha)$.
Define the equivalence relation $\sim_{\alpha}$ on $\SRCT(\alpha)$ by
\[
\tau_1 \sim_{\alpha} \tau_2 \quad \text{if and only if} \quad \rmst_i(\tau_1) = \rmst_i(\tau_2) \quad \text{for all } 1 \leq i \leq \alphamax.
\]
Let $\classQS{\alpha}$ be the set of all equivalence classes under $\sim_{\alpha}$.
For $E \in \classQS{\alpha}$,  let $\mQS{\alpha,E}$ be the submodule of $\mQS{\alpha}$ whose underlying space is the $\C$-span of $E$.

In \cite[Theorem 3.2]{15TW}, Tewari--van Willigenburg introduced a left $H_n(0)$-module denoted as $\mQS{\alpha}$. This module was defined by considering an appropriate $H_n(0)$-action on the $\C$-span of $\SRCT(\alpha)$.
They demonstrated that $\mQS{\alpha}$ is decomposed as 
\[
\bigoplus_{E \in \classQS{\alpha}} \mQS{\alpha,E}.
\]
where $\mQS{\alpha,E}$ is the submodule of $\mQS{\alpha}$ whose underlying space is the $\C$-span of $E$.
Later, K\"{o}nig \cite{19Konig} showed that every direct summand $\mQS{\alpha,E}$ is indecomposable.

An $\SRCT$ $\tau$ is said to be a \emph{source tableau} if, for every $i \notin \Des{}{\tau}$ where $i \neq n$, $i+1$ lies to the immediate left of $i$.
An $\SRCT$ $\tau$ is said to be a \emph{sink tableau} if, for every $i \in \Des{}{\tau}$, $i$ is an attacking descent.
In \cite[Section 6]{15TW}, it was shown that there is a unique source tableau $\sourcetauE{E}$ and a unique sink tableau $\sinktauE{E}$ in each equivalence class $E$.
These tableaux satisfy the following property: for any $\tau \in E$,
\begin{align*}
\tau = \pi_\xi \cdot \sourcetauE{E} \quad \text{for some } \xi \in \SG_n \quad \text{and} \quad \pi_{\xi} \cdot \tau = \sinktauE{E} \quad \text{for some } \xi \in \SG_n.
\end{align*}

From now on, we fix an equivalence class $E \in \classQS{\alpha}$ and let $V = \mQS{\alpha,E} \ (= H_n(0) \cdot \sourcetauE{E})$.
Let 
\[
\Des{}{\sourcetauE{E}} = \left\{d_1 < d_2 < \cdots < d_{m_E^{~}}\right\},
\]
where $m_E := |\Des{}{\sourcetauE{E}}|$.
For convenience, we set $d_0 := 0$ and $d_{m_E^{~}+1} := n$.
In \cite{22CKNO1}, the composition diagram $\tcd(\alpha)$ was decomposed into $\tH_j$'s ($1 \leq j \leq m_E^{~}+1$), where $\tH_j$ is the horizontal strip occupied by the boxes with entries from $d_{j-1}+1$ to $d_j$ in $\sourcetauE{E}$.
Employing this decomposition, Jung--Kim--Lee--Oh~\cite{22JKLO} introduced a reading word of $\tau \in E$, denoted by $\tread{\tau}$, in the following way:
For $1 \le j \le m_E^{~} + 1$, let $\rmw^{(j)}(\tau)$ be the word obtained by reading the entries in $\tau$ filling $\tH_j$ from left to right.
Define $\tread{\tau}$ to be the word obtained by concatenating $\rmw^{(i)}(\tau)$'s:
\[
\tread{\tau} = \rmw^{(1)}(\tau) \ \rmw^{(2)}(\tau) \  \cdots \  \rmw^{(m_E^{~} + 1)}(\tau).
\]

\begin{example}
For $\alpha = (2,3,2,4) \models 11$, let  
\[
\tau_1=
\begin{ytableau}
{\color{red} 3}  &  2 \\
{\color{red} 6}  &  5  &  4 \\ 
{\color{red} 7}  &  {\color{red} 1} \\ 
11 &  10 &  9  &  8
\end{ytableau}, \quad 
\tau_2=
\begin{ytableau}
{\color{red} 4}  &  2 \\
{\color{red} 7}  &  5  &  3 \\ 
{\color{red} 8}  &  {\color{red} 1} \\ 
11 &  10 &  9  &  6
\end{ytableau} \quad \text{and} \quad 
\tau_3=
\begin{ytableau}
{\color{red} 5}  &  {\color{red} 2} \\
{\color{red} 8}  &  {\color{red} 6}  &  3 \\ 
{\color{red} 9}  &  {\color{red} 1} \\ 
11 &  10 &  7  &  4
\end{ytableau}.
\]
Here, for $k = 1,2,3$, the red entries in $\tau_k$ indicate that they belong to $\Des{}{\tau_k}$.
These tableaux are contained in the same class, denoted as $E$, of $\mQS{\alpha}$. Specifically, $\tau_1$ is the source tableau in $E$, and $\tau_3$ is the sink tableau in $E$. Furthermore, we have the following relations:
\[
\tau_2 = \pi_{6}\pi_{7} \pi_{3} \cdot \tau_1
\quad \text{and} \quad 
\tau_3 = \pi_{7} \pi_{8}\pi_{4} \pi_{5} \cdot \tau_2.
\]
Since $\Des{}{\tau_1} = \{1 < 3 < 6 < 7\}$, the decomposition of $\tcd(\alpha)$ into the horizontal strips $\tH_j$ $(1 \leq j \leq 5)$ is 
\[
\begin{tikzpicture}
\def \hhh {4.5mm}
\def \vvv {5mm}
\filldraw[color=purple!40] (\hhh*0,\vvv*1) rectangle (\hhh*2,\vvv*2);
\draw (\hhh*0,\vvv*1) rectangle (\hhh*2,\vvv*2);
\filldraw[color=red!20] (\hhh*0,\vvv*0) rectangle (\hhh*3,\vvv*1);
\draw (\hhh*0,\vvv*0) rectangle (\hhh*3,\vvv*1);
\filldraw[color=blue!10] (\hhh*0,\vvv*-1) rectangle (\hhh*1,\vvv*0);
\draw (\hhh*0,\vvv*-1) rectangle (\hhh*1,\vvv*0);
\filldraw[color=green!10] (\hhh*1,\vvv*-1) rectangle (\hhh*2,\vvv*0);
\draw (\hhh*1,\vvv*-1) rectangle (\hhh*2,\vvv*0);
\filldraw[color=yellow!30] (\hhh*0,\vvv*-2) rectangle (\hhh*4,\vvv*-1);
\draw (\hhh*0,\vvv*-2) rectangle (\hhh*4,\vvv*-1);

\node at (\hhh*1.6,\vvv*-0.5) {\small $\tH_1$};
\node at (\hhh*0.5,\vvv*-0.5) {\small $\tH_4$};
\node at (\hhh*1.5,\vvv*0.5) {\small $\tH_3$};
\node at (\hhh*2,\vvv*-1.5) {\small $\tH_5$};
\node at (\hhh*1,\vvv*1.5) {\small $\tH_2$};
\end{tikzpicture} \ ,
\]
which yields the reading words of these tableaux 
\begin{align*}
\tread{\tau_1} & = 1 \ 3 \ 2 \ 6 \ 5 \ 4 \ 7 \ 11 \ 10 \ 9 \ 8,\\
\tread{\tau_2} & = 1 \ 4 \ 2 \ 7 \ 5 \ 3 \ 8 \ 11 \ 10 \ 9 \ 6,\\
\tread{\tau_3} & = 1 \ 5 \ 2 \ 8 \ 6 \ 3 \ 9 \ 11 \ 10 \ 7 \ 4.
\end{align*}
\end{example}

\begin{lemma}{\rm (\cite[Theorem 6]{22JKLO})}
\label{Lemma: S is interval}
For $\alpha \models n$ and $E \in \classQS{\alpha}$, we have the $H_n(0)$-module isomorphism
\[
\mQS{\alpha,E} \cong \sfB_L(\sigma_{\mQS{\alpha,E}},\rho_{\mQS{\alpha,E}}),
\]
where $\sigma_{\mQS{\alpha,E}} =  \tread{\sourcetauE{E}}$ and $
\rho_{\mQS{\alpha,E}} = \tread{\sinktauE{E}}$.
\end{lemma}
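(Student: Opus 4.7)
The plan is to construct an explicit $H_n(0)$-module isomorphism
\[
\mPhi : \mQS{\alpha,E} \longrightarrow \sfB_L(\tread{\sourcetauE{E}}, \tread{\sinktauE{E}}), \qquad \tau \longmapsto \tread{\tau},
\]
by taking the reading word as the bijection between the two bases. Since $\sfB_L(\sigma,\rho)$ has $[\sigma,\rho]_L$ as its $\C$-basis and $\mQS{\alpha,E}$ has $E$ as its $\C$-basis, the task splits into three subtasks: (i) verify that $\tread{\cdot}$ maps $E$ bijectively onto $[\tread{\sourcetauE{E}},\tread{\sinktauE{E}}]_L$; (ii) verify the descent matching $\Des{L}{\tread{\tau}} = \Des{}{\tau}$ for every $\tau \in E$; (iii) check that the tableau action of $\pi_i$ on $\tau$ is intertwined with the interval module action of $\pi_i$ on $\tread{\tau}$.

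First, I would establish (ii) by a direct local inspection of the reading order. By definition, $\tread{\tau}$ concatenates the row-words $\rmw^{(j)}(\tau)$ on the horizontal strips $\tH_1,\ldots,\tH_{m_E+1}$. Since each $\tH_j$ is the strip where $\sourcetauE{E}$ places the integers in the range $[d_{j-1}+1,d_j]$, and since the action of $H_n(0)$ cannot move an entry across these ranges (it only swaps $i$ and $i+1$ in non-descent positions within $\tau$), the relative position of $i$ versus $i+1$ along $\tread{\tau}$ is controlled by which strip each one sits in and by their horizontal positions; a short case analysis yields $\Des{L}{\tread{\tau}} = \Des{}{\tau}$. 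This in turn gives (iii) on the level of descents: $\pi_i$ acts as the identity on the basis element $\tau$ exactly when $i \in \Des{}{\tau}$, which matches the $\sfB_L$ rule $\pi_i \cdot \gamma = \gamma$ when $i \in \Des{L}{\gamma}$.

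Next, for (i) and the non-descent part of (iii), I would argue by induction on the length of a reduced expression $\xi = s_{i_1}\cdots s_{i_p}$ with $\tau = \pi_\xi \cdot \sourcetauE{E}$. The base case is $\tread{\sourcetauE{E}} = \sigma_{\mQS{\alpha,E}}$, which holds by definition. For the inductive step, if $\pi_i \cdot \tau = \tau'$ is a new SRCT (the only nontrivial case), I would show $\tread{\tau'} = s_i \tread{\tau}$ by tracking how swapping $i$ with $i+1$ inside $\tau$ affects the positions of these two entries when reading across the horizontal strips; crucially, the triple condition together with the non-attacking descent condition forces $i$ and $i+1$ to sit in adjacent reading positions of $\tread{\tau}$, so the tableau swap lifts to a single simple transposition on the reading word. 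Combining with (ii), the interval module formula then matches the tableau action verbatim. Surjectivity onto the interval follows from the symmetric argument applied at $\sinktauE{E}$ (using that any $\tau \in E$ satisfies $\pi_\eta \cdot \tau = \sinktauE{E}$ for some $\eta$), which sandwiches $\tread{\tau}$ between $\sigma_{\mQS{\alpha,E}}$ and $\rho_{\mQS{\alpha,E}}$ in the left weak Bruhat order and shows that every element of the interval is hit.

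The main obstacle will be the induction step of (iii), i.e., verifying that a single application of $\pi_i$ on $\tau$ corresponds to left multiplication by $s_i$ on $\tread{\tau}$. This requires a careful classification of the geometric position of $i$ and $i+1$ in $\tau$ (same row, same column, or adjacent columns) and showing that, in the only case where the tableau action produces a new $\tau'$, the entries $i$ and $i+1$ are read consecutively in $\tread{\tau}$. All other ingredients, namely the descent matching and the well-definedness of the reading word on an equivalence class, are combinatorially routine once this key swap lemma is in hand.
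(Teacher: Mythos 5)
This lemma is imported verbatim from \cite[Theorem 6]{22JKLO}; the present paper gives no proof of its own, so there is no in-paper argument to compare against. Taking your sketch on its own terms, it contains a genuine error and misidentifies where the real work lies.

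Your descent-matching claim in (ii) is wrong. In the running example with $\alpha = (2,3,2,4)$ one has $\Des{}{\tau_1} = \{1,3,6,7\}$, while $\tread{\tau_1} = 1\,3\,2\,6\,5\,4\,7\,11\,10\,9\,8$ has $\Des{L}{\tread{\tau_1}} = \{2,4,5,8,9,10\}$. These sets are complementary, not equal; what must be proved is $\Des{L}{\tread{\tau}} = [n-1]\setminus\Des{}{\tau}$, and correspondingly the tableau action fixes $\tau$ when $i \notin \Des{}{\tau}$, not when $i \in \Des{}{\tau}$. Your (ii) and (iii) look mutually consistent only because you have flipped both halves of the correspondence.

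You have also mislabeled the hard step. The ``swap lemma'' you single out needs no adjacency of reading positions: if $\tau'$ is obtained from $\tau$ by exchanging the entries $i$ and $i+1$, then $\tread{\tau'}$ differs from $\tread{\tau}$ by exchanging the values $i$ and $i+1$ in one-line notation, and that is precisely $s_i\tread{\tau}$ (left multiplication) no matter where those values sit. In the example the entries $3$ and $4$ occupy reading positions $2$ and $6$ of $\tread{\tau_1}$, which are not adjacent, yet $\tread{s_3\tau_1} = s_3\tread{\tau_1}$ all the same. The genuinely nontrivial points are (a) the descent complementarity itself and (b) showing that an attacking descent $i$ of $\tau$ corresponds to $s_i\tread{\tau}\notin[\sigma,\rho]_L$ while a nonattacking descent corresponds to $s_i\tread{\tau}\in[\sigma,\rho]_L$. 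Your proposal never touches (b), which is where the triple condition actually enters and which decides whether $\pi_i$ annihilates the basis vector or moves it inside the module. Finally, the sandwiching argument you invoke only gives the containment $\tread{\cdot}(E)\subseteq[\sigma,\rho]_L$; surjectivity onto the full interval is a separate claim that your sketch leaves open.
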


With this preparation, we can state the main result of this subsection.

\begin{theorem} \label{explicit description for S}
For $\alpha \models n$ and $E \in \classQS{\alpha}$, we have
\[
P_{\mQS{\alpha,E}}= P_{D_{\comp(\Des{}{\sourcetauE{E}}^{\rmc});\tread{\sinktauE{E}}}}.
\]
\end{theorem}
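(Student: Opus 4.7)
My strategy is to reduce the identity to a single structural computation, namely that $\tread{\sourcetauE{E}}$ coincides with the longest element $w_0(\gamma)$ of the parabolic subgroup indexed by $\gamma := \comp(\Des{}{\sourcetauE{E}}^{\rmc})$. Once this identification is available, \cref{Lemma: S is interval} and \cref{Thm: Diagram Dalpharho} assemble into the claim with no further work.

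To begin, \cref{Lemma: S is interval} gives $\mQS{\alpha,E} \cong \sfB_L(\tread{\sourcetauE{E}},\tread{\sinktauE{E}})$, so the definition in \cref{Subsec: pomega-partitions} yields
\[
I_{\mQS{\alpha,E}} \;=\; \mapf([\tread{\sourcetauE{E}},\tread{\sinktauE{E}}]_L) \quad \text{and} \quad P_{\mQS{\alpha,E}} \;=\; P_{I_{\mQS{\alpha,E}}}.
\]
On the other hand, \cref{Thm: Diagram Dalpharho}, applied to the pair $(\gamma,\tread{\sinktauE{E}})$, gives
\[
\SGR{P_{D_{\gamma;\tread{\sinktauE{E}}}}} \;=\; \mapf([w_0(\gamma),\tread{\sinktauE{E}}]_L),
\]
provided $w_0(\gamma) \preceq_L \tread{\sinktauE{E}}$. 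Since a poset on $[n]$ is determined by its linear extensions (this is implicit in the discussion preceding \cref{Thm: BW equivalent}), the theorem will follow from the single identity $\tread{\sourcetauE{E}} = w_0(\gamma)$; the weak Bruhat comparison $w_0(\gamma) \preceq_L \tread{\sinktauE{E}}$ is then inherited from $\tread{\sourcetauE{E}} \preceq_L \tread{\sinktauE{E}}$, which is built into \cref{Lemma: S is interval}.

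The central computation is the identification $\tread{\sourcetauE{E}} = w_0(\gamma)$. Write $\Des{}{\sourcetauE{E}} = \{d_1 < d_2 < \cdots < d_{m_E}\}$ with $d_0 := 0$ and $d_{m_E+1} := n$, and recall that in the source tableau every $i \notin \Des{}{\sourcetauE{E}}$ with $i \neq n$ satisfies: $i+1$ lies immediately to the left of $i$. A short induction on $k = d_j - i$ shows that for each $j$ the entries $d_{j-1}+1, d_{j-1}+2, \ldots, d_j$ occupy $d_j - d_{j-1}$ consecutive cells of a single row of $\sourcetauE{E}$, appearing in the order $d_j, d_j - 1, \ldots, d_{j-1}+1$ from left to right. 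Hence $\rmw^{(j)}(\sourcetauE{E}) = d_j \, (d_j-1) \cdots (d_{j-1}+1)$, and concatenating these blocks yields the one-line word whose $i$th entry, for $d_{j-1} < i \le d_j$, equals $d_j + d_{j-1} + 1 - i$. This is precisely the one-line notation of the element of $\SG_n$ that reverses each block $[d_{j-1}+1, d_j]$, i.e., the longest element of $\SG_{[n-1]\setminus\Des{}{\sourcetauE{E}}}$. Since $\set(\gamma) = [n-1]\setminus\Des{}{\sourcetauE{E}}$, this longest element is $w_0(\gamma)$, as desired.

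Combining the two displays above with the identity just proved gives $\SGR{P_{D_{\gamma;\tread{\sinktauE{E}}}}} = I_{\mQS{\alpha,E}} = \SGR{P_{\mQS{\alpha,E}}}$, whence the two posets coincide. The only genuine obstacle is the inductive description of the horizontal strips $\tH_j$ of $\sourcetauE{E}$; this is routine once one exploits the "$i+1$ immediately left of $i$" property, but it is the only place where non-formal content enters.
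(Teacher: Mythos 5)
Your proof is correct and follows essentially the same route as the paper's: both combine \cref{Lemma: S is interval} with \cref{Thm: Diagram Dalpharho} via the identity $\tread{\sourcetauE{E}} = w_0(\comp(\Des{}{\sourcetauE{E}}^{\rmc}))$ and the fact that a poset on $[n]$ is determined by $\SGR{P}$. The only difference is that the paper cites this identity from the proof of \cref{Lemma: S is interval} in~\cite{22JKLO}, whereas you re-derive it directly from the source-tableau property (that $i+1$ lies immediately left of $i$ for each non-descent $i<n$), which is a correct and self-contained substitute for that citation.
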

\begin{proof}
Recall that 
$P_{\mQS{\alpha,E}}$ is the abbreviation of $P_{I_{\mQS{\alpha,E}}}$. 
\cref{Lemma: S is interval} says that  
\[
I_{\mQS{\alpha,E}}=\mapf([\tread{\sourcetauE{E}},\tread{\sinktauE{E}}]_L).
\]
On the other hand, \cref{Thm: Diagram Dalpharho} says that 
\[
\SGR{P_{D_{\comp(\Des{}{\sourcetauE{E}}^{\rmc}); \tread{\sinktauE{E}}}}}=\mapf([w_0(\comp(\Des{}{\sourcetauE{E}}^{\rmc})),\tread{\sinktauE{E}}]_L).
\]
In the proof of \cref{Lemma: S is interval}, it was pointed out that 
$w_0(\comp(\Des{}{\sourcetauE{E}}^{\rmc}))=\tread{\sourcetauE{E}}$.
Consequently, $I_{\mQS{\alpha,E}}=\SGR{P_{D_{\comp(\Des{}{\sourcetauE{E}}^{\rmc});\tread{\sinktauE{E}}}}}$, which provides the assertion.
\end{proof}

\cref{explicit description for S} shows that one can construct $P_{\mQS{\alpha,E}}$ if both $\sourcetauE{E}$ and $\sinktauE{E}$ are available.
However, unlike the cases of $\mDIF{\alpha}$ or $\mESF{\alpha}$, finding $\sourcetauE{E}$ and $\sinktauE{E}$ for a given $E\in \classQS{\alpha}$ is not straightforward.
In the following discussion, we will proceed with the assumption that only source tableaux are known.
\footnote{In general, finding all source tableaux (or sink tableaux) is a nontrivial problem.
Even the cardinality of $\classQS{\alpha}$ is still open.}
This assumption is more natural than assuming knowledge of all sink tableaux since $\sourcetauE{E}$ serves as a generator of $\mQS{\alpha,E}$.
In this situation, the crucial step in utilizing \cref{explicit description for S} is to find $\sinktauE{E}$.
Theoretically, $\sinktauE{E}$ can be obtained from $\sourcetauE{E}$ by applying $H_n(0)$-actions repeatedly. 
However, in practice, this process is not efficient due to its complexity and time-consuming nature.

In what follows, given a tableau $\tau$ (not necessarily $\sourcetauE{E}$) in $\SRCT(\alpha)$, 
we will give a concrete algorithm to produce the sink tableau in $E_\tau$ from $\tau$, where $E_\tau$ is the equivalence class containing $\tau$.

\begin{algorithm}
\label{Algo: produce the sink tableau}
Let $\alpha \models n$ and $\tau \in \SRCT(\alpha)$.
\begin{enumerate}[label = {\it Step \arabic*.}]
\item
Let $(x^{(n)},y^{(n)}) := (\ell(\alpha),1)$ and $A: = \{(x^{(n)},y^{(n)})\}$. 
Define $\tau^{(n-1)}$ to be the filling of $\tcd(\alpha)$ obtained from $\tau$ by setting
\[
\tau^{(n-1)}_{i,j} := \begin{cases}
\tau_{i,j} & \text{if } (i,j) \notin A, \\
* & \text{otherwise.}
\end{cases}
\]

\item
For each $m = n-1,n-2,\ldots,1$, 
define the sequence $\{a^{(m)}_k\}$ using the integer entries in $\tau^{(m)}$ as follows:
\begin{enumerate}[label = (\roman*)]
\item
Consider the columns which contain at least one positive integer.
Let $a^{(m)}_1$ be the greatest integer entry in the leftmost column of $\tau^{(m)}$ among these columns. 

\item
Given $a^{(m)}_k$, let $j$ be the column index of $a^{(m)}_k$ in $\tau^{(m)}$.
Check if there is an integer entry greater than $a^{(m)}_k$ in column $j+1$.
Furthermore, make sure that this entry is placed below $a^{(m)}_k$.
If such an entry exists, define $a^{(m)}_{k+1}$ to be the greatest integer entry in column $j+1$.
Otherwise, terminate the sequence.
\end{enumerate}
From the sequence $\{a^{(m)}_1,a^{(m)}_2,\ldots,a^{(m)}_l\}$ defined in this way, let $(x^{(m)},y^{(m)})$  be the position of $a^{(m)}_l$ in $\tau^{(m)}$.
After inserting $(x^{(m)},y^{(m)})$ into $A$, then define $\tau^{(m-1)}$ to be the filling of $\tcd(\alpha)$ obtained from $\tau$ by setting
\[
\tau^{(m-1)}_{i,j} := \begin{cases}
\tau_{i,j} & \text{if } (i,j) \notin A, \\
* & \text{if }  (i,j) \in A.
\end{cases}
\]

\item 
Given the set $\{(x^{(i)},y^{(i)}) \mid  1 \leq i \leq n\}$ defined by the previous step,
let $\dot{\uptau}$ be the tableau by setting
\[
\dot{\uptau}_{(x^{(i)},y^{(i)})} := i \quad \text{for all } 1 \leq i \leq n.
\]
Then return $\dot{\uptau}$.
\end{enumerate}
\end{algorithm}

We denote by $\sinkAlgo$ the filling $\dot{\uptau}$ obtained from \cref{Algo: produce the sink tableau}.

\begin{proposition}\label{Prop: Stau is sink}
Let $\alpha \models n$ and $\tau \in \SRCT(\alpha)$.
Then the filling $\sinkAlgo$ obtained from \cref{Algo: produce the sink tableau} 
is the sink tableau in $E_\tau$, where $E_\tau$ is the equivalence class containing $\tau$.
\end{proposition}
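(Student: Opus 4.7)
The plan is to prove $\sinkAlgotau{\tau} = \sinktauE{E_\tau}$ by establishing three claims about the output of \cref{Algo: produce the sink tableau}: (i) $\sinkAlgotau{\tau}$ is a standard reverse composition tableau of shape $\alpha$; (ii) $\sinkAlgotau{\tau} \sim_\alpha \tau$; and (iii) every descent of $\sinkAlgotau{\tau}$ is attacking, i.e., $\sinkAlgotau{\tau}$ is a sink tableau. Since each equivalence class contains a unique sink tableau \cite[Section 6]{15TW}, these three claims together identify $\sinkAlgotau{\tau}$ with $\sinktauE{E_\tau}$.

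The central structural observation is that at each step $m$, the chain endpoint $a^{(m)}_l$ is always the greatest unstarred integer in its column of $\tau^{(m)}$: when $l=1$ this holds by construction (the chain starts at the greatest entry of the leftmost column containing integers), and when $l>1$ it is built into the definition of $a^{(m)}_l$ itself. Consequently, the algorithm processes the positions in any fixed column $j$ of $\tcd(\alpha)$ in strictly decreasing order of their original entries in $\tau$. Since the labels $n, n-1, \dots, 1$ are assigned in this order, the entries of the $j$-th column of $\sinkAlgotau{\tau}$ are arranged in the same relative order as in $\tau$, giving $\rmst_j(\sinkAlgotau{\tau}) = \rmst_j(\tau)$ for all $j$. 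This yields (ii), and also transfers the SRCT axioms (row-decreasing, first-column-increasing, and the triple condition) from $\tau$ to $\sinkAlgotau{\tau}$ through column-wise preservation of relative orders, giving (i).

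The main obstacle is (iii). Let $i$ be a descent of $\sinkAlgotau{\tau}$, and write $(r,s) := (x^{(i)}, y^{(i)})$, $(r', s') := (x^{(i+1)}, y^{(i+1)})$, so $s \leq s'$ by the definition of a descent. The goal is to show that either $s = s'$ with $r < r'$, or $s' = s+1$ with $r' > r$. The case $s = s'$ is immediate from the column-wise decreasing processing order established above: since $i < i+1$ are both assigned to column $s$, we must have $r < r'$. For $s < s'$, the plan is to compare the chains at steps $i+1$ and $i$. At step $i+1$ the chain passes through columns $s_0, s_0+1, \dots, s'$ for some $s_0 \leq s$, so in particular column $s+1$ of $\tau^{(i+1)}$ contains an integer entry continuing the chain strictly below the chain's row in column $s$. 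Passing from $\tau^{(i+1)}$ to $\tau^{(i)}$ stars only the single position $(r', s')$. If $s' > s+1$, then the chain-extender in column $s+1$ is still present in $\tau^{(i)}$, so the chain at step $i$ could pass through column $s+1$ and beyond, contradicting its termination at $(r,s)$; hence $s' = s+1$. A further analysis of which entry of column $s+1$ served as the chain-extender at step $i+1$, and why this entry is no longer valid at step $i$, then forces $r' > r$.

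The technical heart of the argument is this case analysis in (iii); the full verification requires careful bookkeeping of the chain behavior between consecutive steps, in particular handling the possibility that starring $(r', s')$ alters the leftmost column of $\tau^{(i)}$ that contains a positive integer, as well as verifying that the greatest-in-column property of the extender in column $s+1$ pins down the entry at $(r', s')$ uniquely.
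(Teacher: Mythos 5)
Your plan---verify that $\sinkAlgo$ is a valid $\SRCT$, that it lies in $E_\tau$, and that every descent of $\sinkAlgo$ is attacking, then invoke uniqueness of sink tableaux---is exactly the structure of the paper's proof, and your key structural observation (the chain endpoint $a^{(m)}_l$ is always the greatest unstarred entry in its column, so labels within any fixed column are assigned in decreasing order of the original $\tau$-entries) is the right engine for step (ii). Your chain-comparison sketch for (iii) also tracks the paper's argument closely.

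There is, however, a genuine gap in step (i). You claim the $\SRCT$ axioms ``transfer from $\tau$ to $\sinkAlgo$ through column-wise preservation of relative orders,'' but the column standardizations $\rmst_c$ alone do not control the cross-column conditions: row-decreasingness and, crucially, the triple condition compare entries in \emph{different} columns, and two fillings with identical column standardizations can disagree on $\SRCT$-hood. For instance, on shape $(2,2)$ the filling with rows $(2,1)$ and $(4,3)$ is an $\SRCT$, while the filling with rows $(1,2)$ and $(4,3)$ has the same $\rmst_1$ and $\rmst_2$ yet fails row-decreasingness. So $\rmst_c(\sinkAlgo)=\rmst_c(\tau)$ (your step (ii)) does not by itself give (i). The paper closes the gap with a direct chain argument: for adjacent-column labels $i>j$ in $\sinkAlgo$ with $j$ lower-right of $i$, the termination of the chain at step $i$ forces $\tau_{x^{(i)},y^{(i)}}>\tau_{x^{(j)},y^{(j)}}$; the triple condition for $\tau$ then supplies the box $(x^{(i)},y^{(j)})$ with $\tau_{x^{(i)},y^{(j)}}>\tau_{x^{(j)},y^{(j)}}$; and only \emph{then} is column-order preservation used to transport this inequality back to $\sinkAlgo$. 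You need an analogous chain argument, not just column-order preservation, for row-decreasingness. As a minor point, your assertion that $s=s'$ forces $r<r'$ is not guaranteed by the $\SRCT$ axioms outside the first column, but it is also unnecessary: same-column entries attack with no row constraint.
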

\begin{proof}
Let us first show that $\sinkAlgo \in \SRCT(\alpha)$.
From the construction of $\sinkAlgo$ using \cref{Algo: produce the sink tableau}, it is clear that $\sinkAlgo$ satisfies (1), (2), and (3) in the definition of $\SRCT$.
Now, to check the condition (4), take any two entries $n \geq i > j \geq 1$ in $\sinkAlgo$ such that 
$i$ and $j$ are in adjacent columns in $\sinkAlgo$, with $j$ positioned lower-right of $i$.
As {\it Step 2} of \cref{Algo: produce the sink tableau}, these $i$ and $j$ are positioned at $(x^{(i)},y^{(i)})$ and $(x^{(j)},y^{(j)})$, respectively.
Then, by the assumption, we have that $x^{(i)} < x^{(j)}$ and $y^{(i)}+1 = y^{(j)}$.
We claim that
\[
(x^{(i)},y^{(j)}) \in \tcd(\alpha) \quad \text{and} \quad \sinkAlgo_{x^{(i)},y^{(j)}} > \sinkAlgo_{x^{(j)},y^{(j)}}.
\]
In {\it Step 2} of \cref{Algo: produce the sink tableau}, $(x^{(i)},y^{(i)})$ is determined as the position of the box containing the last element of the sequence $\{a_k^{(i)}\}$.
This means that each entry in column $y^{(j)}$ of $\tau^{(i)}$ placed below $(x^{(i)},y^{(i)})$ is less than $\tau^{(i)}_{x^{(i)},y^{(i)}}$.
Therefore, we have that  $\tau_{x^{(i)},y^{(i)}} > \tau_{x^{(j)},y^{(j)}}$.
Since $\tau$ is an SRCT, it follows from the definition of SRCTx that 
\[
(x^{(i)},y^{(j)}) \in \tcd(\alpha) \quad \text{and} \quad \tau_{x^{(i)},y^{(j)}} > \tau_{x^{(j)},y^{(j)}}.
\]
The right inequality preserves in $\tau^{(i)}$, so we know that $\tau^{(i)}_{x^{(i)},y^{(j)}} > \tau^{(i)}_{x^{(j)},y^{(j)}}$.
By applying {\it Step 2} of \cref{Algo: produce the sink tableau} to $\tau^{(i)}$ we conclude that $(x^{(i)},y^{(j)})$ is chosen before $(x^{(j)},y^{(j)})$, that is, $\sinkAlgo_{x^{(i)},y^{(j)}} > \sinkAlgo_{x^{(j)},y^{(j)}}$.

We next show that $\sinkAlgo \in E_\tau$.
This can be done by showing
\begin{equation}\label{Claim equivalence}
\rmst_c(\tau) = \rmst_c(\sinkAlgo) \quad \text{for } 1 \leq c \leq \alphamax.
\end{equation}
Given any $1 \leq c \leq \alphamax$, one observes from {\it Step 2} that if $\tau_{i,c} > \tau_{j,c}$, then $
\sinkAlgo_{i,c} > \sinkAlgo_{j,c}$.
This observation yields \cref{Claim equivalence}.

We finally show that $\sinkAlgo$ is the sink tableau in $E_\tau$.
Let us choose $p \in \Des{}{\sinkAlgo}$.
Consider the tableaux $\tau^{(p+1)}$ and $\tau^{(p)}$ defined in \cref{Algo: produce the sink tableau}.
Suppose that the sequence $\{a_k^{(p+1)}\}$ terminates at $l$ for some $l \geq 1$.
We first deal with the case where $l=1$.
In this case $a_1^{(p+1)}$ is in column $y^{(p+1)}$.
And, from (i) of {\it Step 2} we see that the column index of the column containing $a_1^{(p)}$ is greater than or equal to $y^{(p+1)}$.
In addition, the assumption $p \in \Des{}{\sinkAlgo}$ implies that $y^{(p)} \le y^{(p+1)}$.
Thus, $a_1^{(p)}$ is in column $y^{(p+1)}$, and $\{a_k^{(p)}\}$ terminates at $k=1$.
It follows that $p$ is an attacking descent of $\sinkAlgo$.
Next, we deal with the case where $l>1$.
In this case, we have that $y^{(p+1)}-1 \le y^{(p)}$.
And, the assumption $p \in \Des{}{\sinkAlgo}$ implies that $y^{(p)} \le y^{(p+1)}$.
Combining these observations, we conclude $y^{(p)} = y^{(p+1)}$ or $y^{(p+1)}-1$.
The case $y^{(p)} = y^{(p+1)}-1$ and $x^{(p)} < x^{(p+1)}$ cannot occur due to (ii) of {\it Step 2}, and hence $p$ is an attacking descent of $\sinkAlgo$.
Therefore, $\sinkAlgo$ is the sink tableau in $E_\tau$.
\end{proof}

\begin{example}
Let $\alpha = (2,3,2,4) \models 11$ and let  
$E_i$ $(i=1,2,3)$ be the equivalence class containing the source tableau $\tau_i$: 
\[
\tau_1=
\begin{ytableau}
2  &  1 \\ 
5  &  4  & 3 \\
7  &  6 \\ 
11  & 10  & 9 &  8
\end{ytableau} \qquad 
\tau_2=
\begin{ytableau}
2  &  1 \\ 
6  &  5  &  4 \\
7  &  3 \\
11 &  10  &  9  &  8
\end{ytableau} \qquad 
\tau_3=
\begin{ytableau}
3  &  2 \\
6  &  5  &  4 \\
7  &  1 \\
11 &  10 &  9  &  8
\end{ytableau}
\]
It can be easily verified that $\classQS{\alpha} = \{E_1,E_2,E_3\}$.
Now, let us apply \cref{Algo: produce the sink tableau} to $\tau_i$ $(i=1,2,3)$. 
Then, we obtain the sink tableaux: 
\[
\sinkAlgotau{\tau_1} =
\begin{ytableau}
4  &  1 \\ 
6&  5  & 2 \\
9  &  7 \\ 
11  & 10  & 8 &  3
\end{ytableau} \qquad 
\sinkAlgotau{\tau_2} =
\begin{ytableau}
4  &  1 \\
8  &  6  &  2 \\
9  &  5 \\
11 &  10  &  7  &  3
\end{ytableau} \qquad  
\sinkAlgotau{\tau_3} =
\begin{ytableau}
5 & 2 \\ 
8 & 6 & 3 \\ 
9 & 1 \\ 
11 & 10 & 7 & 4
\end{ytableau}
\]
The following table shows $\comp(\Des{}{\tau_i}^{\rmc})$ and $\tread{\sinkAlgotau{\tau_i}}$ for the pairs $(\tau_i,\sinkAlgotau{\tau_i})$ ($i=1,2,3$).
\begin{table}[ht]	
\centering
\tabulinesep=1.2mm
\begin{tabular}{ c|c|c }
\text{class} & $\comp(\Des{}{\tau_i}^{\rmc})$ & $\tread{\sinkAlgotau{\tau_i}}$ \\[1mm] \hline
$E_1$ & $(1,2,1,2^2,1^3)$ & 4 \, 1 \, 6 \, 5 \, 2 \, 9 \, 7 \, 11 \, 10 \, 8 \, 3 \\ 
$E_2$ & $(1,3,1,3,1^3)$ & 4 \, 1 \, 5 \, 8 \, 6 \, 2 \, 9 \, 11 \, 10 \, 7 \, 3  \\ 
$E_3$ & $(2^2,1,3,1^3)$ & 1 \, 5 \, 2 \, 8 \, 6 \, 3 \, 9 \, 11 \, 10 \, 7 \, 4  
\end{tabular} 
\end{table}
\\
Applying \cref{Algo: Construction of  D_alpha_rho} to these data, we obtain  
the Hasse diagram of the poset $P_{\mQS{\alpha,E_i}}$ for each $i=1,2,3$.
\[
\def \ccc {3.5mm}
\def \hhh {1.2em}
\def \vvv {1.2em}
\begin{tikzpicture}[baseline = 14mm]
\node[left] at (\hhh*-1.5,\vvv*2) {\small $P_{\mQS{\alpha,E_1}} =$};
\node[shape=circle,draw,minimum size=\ccc, inner sep=0pt] at (\hhh*3,\vvv*5) (D4) {\tiny $4$};
\node[shape=circle,draw,minimum size=\ccc, inner sep=0pt] at (\hhh*2,\vvv*4) (D3) {\tiny $3$};
\node[shape=circle,draw,minimum size=\ccc, inner sep=0pt] at (\hhh*1,\vvv*3) (D2) {\tiny $2$};
\node[shape=circle,draw,minimum size=\ccc, inner sep=0pt] at (\hhh*-1,\vvv*1) (D1) {\tiny $1$};
\node[shape=circle,draw,minimum size=\ccc, inner sep=0pt] at (\hhh*2,\vvv*2) (D5) {\tiny $5$};
\node[shape=circle,draw,minimum size=\ccc, inner sep=0pt] at (\hhh*3,\vvv*3) (D6) {\tiny $6$};
\node[shape=circle,draw,minimum size=\ccc, inner sep=0pt] at (\hhh*3,\vvv*1) (D9) {\tiny $9$};
\node[shape=circle,draw,minimum size=\ccc, inner sep=0pt] at (\hhh*2,\vvv*0) (D8) {\tiny $8$};
\node[shape=circle,draw,minimum size=\ccc, inner sep=0pt] at (\hhh*1,\vvv*-1) (D7) {\tiny $7$};
\node[shape=circle,draw,minimum size=\ccc, inner sep=0pt] at (\hhh*3,\vvv*-1) (D11) {\tiny $11$};
\node[shape=circle,draw,minimum size=\ccc, inner sep=0pt] at (\hhh*2,\vvv*-2) (D10) {\tiny $10$};

\draw (D1) -- (D2) -- (D3) -- (D4);
\draw (D5) -- (D6);
\draw (D7) -- (D8) -- (D9);
\draw (D10) -- (D11);
\draw[line width=\lw] (D1) -- (D7) -- (D10);
\draw[line width=\lw] (D8) -- (D11);
\draw[line width=\lw] (D2) -- (D5) -- (D9);
\draw[line width=\lw] (D3) -- (D6);
\end{tikzpicture}
\qquad 
\begin{tikzpicture}[baseline = 14mm]
\node[left] at (\hhh*-0.7,\vvv*2) {\small $P_{\mQS{\alpha,E_2}} =$};
\node[shape=circle,draw,minimum size=\ccc, inner sep=0pt] at (\hhh*3,\vvv*5) (D4) {\tiny $4$};
\node[shape=circle,draw,minimum size=\ccc, inner sep=0pt] at (\hhh*2,\vvv*4) (D3) {\tiny $3$};
\node[shape=circle,draw,minimum size=\ccc, inner sep=0pt] at (\hhh*1,\vvv*3) (D2) {\tiny $2$};
\node[shape=circle,draw,minimum size=\ccc, inner sep=0pt] at (\hhh*0,\vvv*2) (D1) {\tiny $1$};
\node[shape=circle,draw,minimum size=\ccc, inner sep=0pt] at (\hhh*3,\vvv*3) (D5) {\tiny $5$};
\node[shape=circle,draw,minimum size=\ccc, inner sep=0pt] at (\hhh*2,\vvv*0) (D6) {\tiny $6$};
\node[shape=circle,draw,minimum size=\ccc, inner sep=0pt] at (\hhh*3,\vvv*1) (D7) {\tiny $7$};
\node[shape=circle,draw,minimum size=\ccc, inner sep=0pt] at (\hhh*4,\vvv*2) (D8) {\tiny $8$};
\node[shape=circle,draw,minimum size=\ccc, inner sep=0pt] at (\hhh*4,\vvv*0) (D9) {\tiny $9$};
\node[shape=circle,draw,minimum size=\ccc, inner sep=0pt] at (\hhh*4,\vvv*-2) (D10) {\tiny $10$};
\node[shape=circle,draw,minimum size=\ccc, inner sep=0pt] at (\hhh*5,\vvv*-1) (D11) {\tiny $11$};

\draw (D1) -- (D2) -- (D3) -- (D4);
\draw (D6) -- (D7) -- (D8);
\draw (D10) -- (D11);
\draw[line width=\lw] (D1) -- (D6) -- (D10);
\draw[line width=\lw] (D2) -- (D7) -- (D9) -- (D11);
\draw[line width=\lw] (D3) -- (D5) -- (D8);
\end{tikzpicture}
\qquad 
\begin{tikzpicture}[baseline = 14mm]
\node[left] at (\hhh*-0.7,\vvv*2) {\small $P_{\mQS{\alpha,E_3}} =$};
\node[shape=circle,draw,minimum size=\ccc, inner sep=0pt] at (\hhh*3,\vvv*5) (D4) {\tiny $4$};
\node[shape=circle,draw,minimum size=\ccc, inner sep=0pt] at (\hhh*2,\vvv*4) (D3) {\tiny $3$};
\node[shape=circle,draw,minimum size=\ccc, inner sep=0pt] at (\hhh*1,\vvv*3) (D2) {\tiny $2$};
\node[shape=circle,draw,minimum size=\ccc, inner sep=0pt] at (\hhh*0,\vvv*2) (D1) {\tiny $1$};
\node[shape=circle,draw,minimum size=\ccc, inner sep=0pt] at (\hhh*3,\vvv*3) (D5) {\tiny $5$};
\node[shape=circle,draw,minimum size=\ccc, inner sep=0pt] at (\hhh*2,\vvv*0) (D6) {\tiny $6$};
\node[shape=circle,draw,minimum size=\ccc, inner sep=0pt] at (\hhh*3,\vvv*1) (D7) {\tiny $7$};
\node[shape=circle,draw,minimum size=\ccc, inner sep=0pt] at (\hhh*4,\vvv*2) (D8) {\tiny $8$};
\node[shape=circle,draw,minimum size=\ccc, inner sep=0pt] at (\hhh*3,\vvv*-1) (D9) {\tiny $9$};
\node[shape=circle,draw,minimum size=\ccc, inner sep=0pt] at (\hhh*4,\vvv*0) (D10) {\tiny $10$};
\node[shape=circle,draw,minimum size=\ccc, inner sep=0pt] at (\hhh*4,\vvv*-2) (D11) {\tiny $11$};

\draw (D1) -- (D2) -- (D3) -- (D4);
\draw (D6) -- (D7) -- (D8);
\draw (D9) -- (D10);
\draw[line width=\lw] (D1) -- (D6) -- (D9) -- (D11);
\draw[line width=\lw] (D2) -- (D7) -- (D10);
\draw[line width=\lw] (D3) -- (D5) -- (D8);
\end{tikzpicture}
\]
\end{example}

\begin{remark}
(a) In \cite[Section 4.1]{22BS}, Bardwell--Searles introduced an $H_n(0)$-action on the set of all standard Young row-strict composition tableaux $\mathrm{SYRT}(\alpha)$ of shape $\alpha$. The resulting $H_n(0)$-module was denoted as $\mRQS{\alpha}$.
Furthermore, they established an equivalence relation on $\mathrm{SYRT}(\alpha)$ and closely examined the class $E_0$, which comprises all tableaux $T \in \mathrm{SYRT}(\alpha)$ satisfying the property that entries increase from bottom to top in every column of $T$.
Notably, they provided a method to find the source tableau in $E_0$ using the notion of \emph{boundary boxes} and \emph{threads}.
Indeed, \cref{Algo: produce the sink tableau} can be viewed as a generalization of this method. 
To be precise, under the isomorphism $\mathcal{W}: \mRQS{\alpha} \ra \upphi \circ \uptheta \circ \upchi [\mQS{\alpha^\rmr}]$ given in \cite[Proposition 1]{22JKLO}, the source tableau $T_{\rm sup}$ in $E_0$ is mapped to $\sinktauE{C_{\alpha^\rmr}}$, where $C_{\alpha^\rmr} \in \classQS{\alpha^\rmr}$ is the class which comprises all tableaux $\tau \in \SRCT(\alpha^\rmr)$ satisfying the property that entries increase from top to bottom in each column of $\tau$.
Concretely, the way of defining the sequence ${a^{(m)}_i}$ in \cref{Algo: produce the sink tableau} is precisely analogous to the method of defining threads.

(b) 
Tewari--van Willigenburg~\cite{19TW} introduced a left $H_n(0)$-module denoted as $\mPQS{\sigma}{\alpha}$ for each permutation $\sigma \in \SG_{\ell(\alpha)}$. This module is constructed by considering a modified version of SRCTx, called {\em standard permuted composition tableaux} (SPCTx).
It is noteworthy that the procedure outlined in \cref{Algo: produce the sink tableau} can be effectively applied to SPCTx.
Utilizing the result from \cite[Theorem 6]{22JKLO}, we can deduce that the assertion in \cref{explicit description for S} is true for $\mPQS{\sigma}{\alpha}$ as well.
To be more precise, for any indecomposable direct summand $\mPQS{\sigma}{\alpha,E}$ of  $\mPQS{\sigma}{\alpha}$, we have 
\[
P_{\mPQS{\sigma}{\alpha,E}}= P_{D_{\comp(\Des{}{\sourcetauE{E}}^{\rmc});\tread{\sinktauE{E}}}}.
\]
\end{remark}

\subsubsection{$\mbalpha \ne \mDIF{\alpha}, \mESF{\alpha}, \mQS{\alpha}$}
In the cases where $\mbalpha = \mRDIF{\alpha}$ or $\mRESF{\alpha}$, we have the isomorphisms
\[
\mRDIF{\alpha} \cong  \uptheta \circ \upchi[\mDIF{\alpha}]
\quad \text{and} \quad \mRESF{\alpha} \cong  \uptheta \circ \upchi[\mESF{\alpha}].
\]
Using this fact along with \cref{thm: auto-twists}, we obtain
\[
P_{\mRDIF{\alpha}} = \overline{P_{\mDIF{\alpha}}} \quad \text{and} \quad P_{\mRESF{\alpha}} = \overline{P_{\mESF{\alpha}}}.
\]
In the cases where $\mbalpha = \mYQS{\alpha}, \mYRQS{\alpha}$, or $\mRQS{\alpha}$, we have the $H_n(0)$-module isomorphisms
\begin{align*}
\mYQS{\alpha} 
\cong \upphi[\mQS{\alpha^\rmr}], \quad 
\mYRQS{\alpha} \cong \uptheta \circ \upchi[\mQS{\alpha}] \quad \text{and} \quad \mRQS{\alpha} \cong \upphi \circ \uptheta \circ \upchi [\mQS{\alpha^\rmr}].
\end{align*}
This implies that every indecomposable direct summand $V \in \setIndSummandY$ appears as $\phi[\mQS{\beta,E}]$ for some $\beta \in {\alpha, \alpha^\rmr}$, $E \in \classQS{\beta}$, and $\phi \in {\upphi, \uptheta \circ \upchi, \upphi \circ \uptheta \circ \upchi}$.
According to \cref{thm: auto-twists}, we can obtain the posets $P_{V}$ by applying the poset isomorphisms ${}^-$ and ${}^*$ to $P_{\mQS{\beta,E}}$. To be precise, we define
\begin{align*}
\mYQS{\alpha,E} := \upphi[\mQS{\alpha^\rmr,E}], \quad 
\mYRQS{\alpha,E} := \uptheta \circ \upchi[\mQS{\alpha,E}] \quad \text{and} \quad 
\mRQS{\alpha,E} := \upphi \circ \uptheta \circ \upchi [\mQS{\alpha^\rmr,E}].
\end{align*} 
Then, we have
\[
P_{\mYQS{\alpha,E}} = (\overline{P_{\mQS{\alpha^\rmr,E}}})^*, \quad P_{\mYRQS{\alpha,E}} = \overline{P_{\mQS{\alpha,E}}} \quad \text{and} \quad 
P_{\mRQS{\alpha,E}} = (P_{\mQS{\alpha^\rmr,E}})^*.
\]

\begin{remark}
In the context of the $K$-theory of Grassmannians, Pechenik--Yong \cite{17PY2} introduced genomic Schur functions $U_\lambda$ ($\lambda$ is a partition) as a genomic analog of Schur functions.
For every $1 \le m \le |\lambda|$, Kim--Yoo \cite{22KY} introduced the left $H_m(0)$-module $\bfG_{\lambda;m}$ such that $\ch([\bfG_{\lambda;m}])$ is equal to the $m$th homogeneous component of $U_\lambda$ and provided a direct sum decomposition
\[
\bfG_{\lambda;m} = \bigoplus_{E \in \calE_{\lambda;m}} \bfG_E.
\]
For the undefined notation $\calE_{\lambda;m}$ and $\bfG_E$, see \cite[Section 3.2]{22KY}.
Further, they proved that for each $E \in \calE_{\lambda;m}$, $\bfG_E \cong \sfB_L(w_0(\alpha), \rho)$ for some $\alpha \models n$ and $\rho \in \SG_n$ with $w_0(\alpha) \preceq_L \rho$.
Therefore, one can obtain a poset $P$ such that $M_P \cong \calF_m(\bfG_E)$ by applying \cref{Algo: Construction of D_alpha_rho} and \cref{Thm: Diagram Dalpharho}.
\end{remark}

\subsection{Tableau descriptions of the coefficients in the quasisymmetric power sum expansions of \texorpdfstring{$\DIF{\alpha}$ and $\ESF{\alpha}$}{Lg}}
\label{Tableau descriptions of the coefficients in the expansions}

In this subsection, we describe how to express the coefficients appearing in the quasisymmetric power sum expansions of two important bases, namely the dual immaculate function $\DIF{\alpha}$ and the extended Schur function $\ESF{\alpha}$, in terms of border strip tableaux.

To help with understanding, we first review Liu--Weselcouch's result on a quasisymmetric power sum expansion of skew Schur functions. 
For more details and definitions, please refer to \cite{21LW}. 
Given a skew partition $\lambda/ \mu$ of $n$, let $\tcd(\lambda / \mu) := \tcd(\lambda) \setminus \tcd(\mu)$, and let $T_{\lambda / \mu}$ be the filling of $\tcd(\lambda / \mu)$ with entries $1,2, \ldots, n$ from bottom to top starting with the leftmost column. 
We define $P_{\lambda / \mu} = ([n], \preceq_{\lambda / \mu})$ to be the poset such that $i \preceq_{\lambda / \mu} j$ if the box filled with $i$ is weakly below and left of the box filled with $j$ in $T_{\lambda / \mu}$.
For example, if $\lambda / \mu = (3,3,2) / (2)$, then 
\[
T_{\lambda / \mu} = \begin{ytableau}
\none & \none & 6 \\
2 & 4 & 5 \\
1 & 3
\end{ytableau} 
\quad \text{and} \quad 
P_{\lambda / \mu} = 
\begin{array}{l}
\begin{tikzpicture}
\def \pp {0.45}
\def \ccc {1mm}
\node[shape=circle,draw,minimum size=\ccc*3, inner sep=0pt] at (0*\pp, 0*\pp) (A6) {\tiny $6$};
\node[shape=circle,draw,minimum size=\ccc*3, inner sep=0pt] at (1*\pp, 1*\pp) (A5) {\tiny $5$};
\node[shape=circle,draw,minimum size=\ccc*3, inner sep=0pt] at (2*\pp, 0*\pp) (A4) {\tiny $4$};
\node[shape=circle,draw,minimum size=\ccc*3, inner sep=0pt] at (3*\pp, 1*\pp) (A3) {\tiny $3$};
\node[shape=circle,draw,minimum size=\ccc*3, inner sep=0pt] at (3*\pp, -1*\pp) (A2) {\tiny $2$};
\node[shape=circle,draw,minimum size=\ccc*3, inner sep=0pt] at (4*\pp, 0*\pp) (A1) {\tiny $1$};

\draw[line width=0.5mm] (A6) -- (A5);
\draw (A5) -- (A4) -- (A2);
\draw[line width=0.5mm] (A4) -- (A3);
\draw (A3) -- (A1);
\draw[line width=0.5mm] (A2) -- (A1);
\end{tikzpicture}
\end{array}.
\]
It is well known that 
$K_{P_{\lambda/ \mu}} = s_{\lambda/ \mu}$ (for instance, see \cite[p361]{99Stanley}).
Given $\alpha \models n$, let 
\[
\chi^{\lambda / \mu}(\alpha) := \sum_{T \in \mathrm{BST}_{s_{\lambda/\mu}}(\alpha)} (-1)^{\mathrm{ht}(T)},
\]
where $\mathrm{BST}_{s_{\lambda/\mu}}(\alpha)$ is the set of all border-strip tableaux of shape $\lambda / \mu$ and type $\alpha$ and $\mathrm{ht}(T)$ is the sum of the heights of the border strips that make up $T$.
By using \cite[Corollary 7.17.4]{99Stanley}, one can easily see that $\chi^{\lambda / \mu}(\alpha) = \chi^{\lambda / \mu}(\widetilde{\alpha})$.
In addition, \cite[Corollary 7.17.5]{99Stanley} says that \[
s_{\lambda/\mu} = \sum_{\nu \vdash n} \chi^{\lambda / \mu}(\nu) p_\nu / z_\nu,
\]
where $p_\nu$ is the power sum symmetric function associated with $\nu$.
Putting these together with \cref{lem: quasi power sum and power sum}, we have
\begin{align}\label{eq: power sum exp of schur}
K_{P_{\lambda/ \mu}} =  \sum_{\beta \models n} 
\left( \sum_{T \in \mathrm{BST}_{s_{\lambda/\mu}}(\beta)}(-1)^{\mathrm{ht}(T)} \right) \frac{\Psi_\beta}{z_\beta}.
\end{align}
On the other hand, due to \cref{Lem: Liu--Weselcouch result}, we have
\begin{align}\label{eq: KP lambda / mu and pt}
K_{P_{\lambda/ \mu}} = \sum_{\beta \models n} \left(
\sum_{f^* \in \mathsf{pt}_{P_{\lambda/ \mu}}(\beta)}
\sign{f^*}
 \right)\frac{\Psi_\beta}{z_\beta}.
\end{align}

Liu--Weselcouch showed in a direct way that the right-hand side of \cref{eq: power sum exp of schur} and that of \cref{eq: KP lambda / mu and pt} are equal.
Let us explain their proof briefly.
Given a starred $P_{\lambda/ \mu}$-partition $f^*$, let $T_{f^*}$ be the filling of $\tcd(\lambda / \mu)$ defined by $T_{f^*}((x,y)) = |f^*(i)|$, where  $(T_{\lambda / \mu})_{x,y} = i$.
For instance, 
\[
\text{if} \quad f^* =\hspace{-1ex}   
\begin{array}{l}
\begin{tikzpicture}
\def \pp {0.6}
\def \ccc {1.2mm}
\node[shape=circle,draw,minimum size=\ccc*3, inner sep=0pt] at (0*\pp, 0*\pp) (A6) {\small $6$};
\node[shape=circle,draw,minimum size=\ccc*3, inner sep=0pt] at (1*\pp, 1*\pp) (A5) {\small $5$};
\node[shape=circle,draw,minimum size=\ccc*3, inner sep=0pt] at (2*\pp, 0*\pp) (A4) {\small $4$};
\node[shape=circle,draw,minimum size=\ccc*3, inner sep=0pt] at (3*\pp, 1*\pp) (A3) {\small $3$};
\node[shape=circle,draw,minimum size=\ccc*3, inner sep=0pt] at (3*\pp, -1*\pp) (A2) {\small $2$};
\node[shape=circle,draw,minimum size=\ccc*3, inner sep=0pt] at (4*\pp, 0*\pp) (A1) {\small $1$};

\draw[line width=0.5mm] (A6) -- (A5);
\draw (A5) -- (A4) -- (A2);
\draw[line width=0.5mm] (A4) -- (A3);
\draw (A3) -- (A1);
\draw[line width=0.5mm] (A2) -- (A1);

\node[left] at (4*\pp - 0.1*\pp, 0*\pp - 0.1*\pp) {\tiny $1^*$};
\node[left] at (3*\pp - 0.1*\pp, -1*\pp - 0.1*\pp) {\tiny $-1$};
\node[left] at (3*\pp - 0.1*\pp, 1*\pp - 0.1*\pp) {\tiny $3^*$};
\node[left] at (2*\pp - 0.1*\pp, 0*\pp - 0.1*\pp) {\tiny $2^*$};
\node[left] at (1*\pp - 0.1*\pp, 1*\pp - 0.1*\pp) {\tiny $2$};
\node[left] at (0*\pp - 0.1*\pp, 0*\pp - 0.1*\pp) {\tiny $-2$};
\end{tikzpicture}
\end{array},
\quad \text{then} \quad
T_{f^*} = 
\begin{ytableau}
\none & \none & 2 \\
1 & 2 & 2 \\
1 & 3
\end{ytableau}\; .
\]
Here, the number written on the lower left of $\circled{i}$ means  $f^*(i)$.
One can easily see that $\mathrm{amb}(f^*) = (1,1,\ldots, 1)$ if and only if  $T_{f^*}$ is a border strip tableau and $\mathrm{wt}(f^*) = \mathrm{type}(T_{f^*})$.
This implies that the map $\Phi:  \mathsf{pt}_{P_{\lambda/ \mu}}(\beta) \ra \mathrm{BST}_{s_{\lambda/\mu}}(\beta)$, $f^* \mapsto T_{f^*}$ is a bijection.
In addition, the equality $\sign{f^*} = (-1)^{\mathrm{ht}(T_{f^*})}$ is obtained straightforwardly.
Thus, 
$$
\sum_{f^* \in \mathsf{pt}_{P_{\lambda/ \mu}}(\beta)}
\sign{f^*} = \sum_{T \in \mathrm{BST}_{s_{\lambda/\mu}}(\beta)}(-1)^{\mathrm{ht}(T)}.
$$

Motivated by the above result, we provide a description of the coefficients appearing in the quasisymmetric power sum expansions of $\DIF{\alpha}$ and $\ESF{\alpha}$ in terms of certain tableaux which we call \emph{border strip tableaux}.

From now on, $\balpha$ is limited to either $\DIF{\alpha}$ or $\ESF{\alpha}$.
By~\cref{Thm: Yalpha to quasipowersum}, we see that 
\begin{equation}\label{Eq: Yalpha to quasipowersums II}
\balpha = \sum_{\beta \models n} d_{\alpha \beta} \frac{\Psi_\beta}{z_\beta}
\quad
\text{ with } \quad  
d_{\alpha\beta} := \sum_{f^*}
\sign{f^*}.
\end{equation}
To begin with, let us introduce the notion of border strips corresponding to $f^*$'s appearing in~\cref{Eq: Yalpha to quasipowersums II}.
We have two cases.

(a) In the case where $\balpha= \DIF{\alpha}$,
a {\em border strip} $B$ is a connected skew diagram in $\tcd(\alpha)$ that is a horizontal strip
or the union of connected horizontal strips that intersect the first column.

(b) In the case where $\balpha=\ESF{\alpha}$,
a {\em border strip} $B$ is a skew diagram in $\tcd(\alpha)$ satisfying the following conditions: 
\begin{itemize}
\item In addition to the notion of the usual connectedness, assuming that two boxes of $B$ in the same column are also considered connected when there are no boxes in $\tcd(\alpha) \setminus B$ in the middle of them, $B$ is connected.
See the following example.
\[
\begin{tikzpicture}
\filldraw[black!50] (0,0) rectangle (\hhh*1,\vvv*1);
\filldraw[black!50] (\hhh*1,0) rectangle (\hhh*2,\vvv*1);
\filldraw[black!50] (\hhh*2,0) rectangle (\hhh*3,\vvv*1);
\filldraw[black!50] (\hhh*2,\vvv*-3) rectangle (\hhh*3,\vvv*-2);
\filldraw[black!50] (\hhh*3,\vvv*-3) rectangle (\hhh*4,\vvv*-2);
\filldraw[black!50] (\hhh*1,\vvv*-2) rectangle (\hhh*2,\vvv*-1);
\draw (0,0) rectangle (\hhh*1,\vvv*1); 
\draw (\hhh*1,0) rectangle (\hhh*2,\vvv*1);  
\draw (\hhh*2,0) rectangle (\hhh*3,\vvv*1);  
\draw (0,\vvv*-1) rectangle (\hhh*1,\vvv*0);  
\draw (0,\vvv*-2) rectangle (\hhh*1,\vvv*-1);  
\draw (\hhh*1,\vvv*-2) rectangle (\hhh*2,\vvv*-1);  
\draw (0,\vvv*-3) rectangle (\hhh*1,\vvv*-2);  
\draw (\hhh*1,\vvv*-3) rectangle (\hhh*2,\vvv*-2);  
\draw (\hhh*2,\vvv*-3) rectangle (\hhh*3,\vvv*-2);  
\draw (\hhh*3,\vvv*-3) rectangle (\hhh*4,\vvv*-2);  
\node[below] at (\hhh*2,\vvv*-3.5) {connected};
\end{tikzpicture}
\hspace*{20mm}
\begin{tikzpicture}
\filldraw[black!50] (\hhh*1,\vvv*-2) rectangle (\hhh*2,\vvv*-1);
\filldraw[black!50] (\hhh*1,\vvv*-3) rectangle (\hhh*2,\vvv*-2);
\filldraw[black!50] (\hhh*2,\vvv*-3) rectangle (\hhh*3,\vvv*-2);
\filldraw[black!50] (\hhh*3,\vvv*-3) rectangle (\hhh*4,\vvv*-2);
\filldraw[black!50] (\hhh*0,\vvv*-1) rectangle (\hhh*1,\vvv*-0);
\draw (0,0) rectangle (\hhh*1,\vvv*1); 
\draw (\hhh*1,0) rectangle (\hhh*2,\vvv*1);  
\draw (\hhh*2,0) rectangle (\hhh*3,\vvv*1);  
\draw (0,\vvv*-1) rectangle (\hhh*1,\vvv*0);  
\draw (0,\vvv*-2) rectangle (\hhh*1,\vvv*-1);  
\draw (\hhh*1,\vvv*-2) rectangle (\hhh*2,\vvv*-1);  
\draw (0,\vvv*-3) rectangle (\hhh*1,\vvv*-2);  
\draw (\hhh*1,\vvv*-3) rectangle (\hhh*2,\vvv*-2);  
\draw (\hhh*2,\vvv*-3) rectangle (\hhh*3,\vvv*-2);  
\draw (\hhh*3,\vvv*-3) rectangle (\hhh*4,\vvv*-2);  
\node[below] at (\hhh*2,\vvv*-3.5) {not connected};
\end{tikzpicture}
\]

\item If $(x,z), (y,z) \in B$ with $x<y$, then $(x,z-1) \notin B$.
\end{itemize}

In either case, 
the height ${\rm ht}(B)$ of a border strip $B$ is defined to be one less than the number of rows.
For example, the following skew diagrams are border strips of size $4$.
\[
\begin{tikzpicture}
\def \hhhh{26mm}
\node at (-\hhhh*0,0) (A1) {\begin{ytableau}
*(black!50) & *(black!50) & *(black!50) & *(black!50)
\end{ytableau}};    
\node at (\hhhh,0) (A2) {
\begin{ytableau}
*(black!50) \\
*(black!50) \\
*(black!50) \\
*(black!50)
\end{ytableau}};
\draw[-,dotted,line width=\lw*0.5] (\hhhh,-\vvv*2.5) -- (\hhhh,\vvv*2.5); 
\node at (\hhhh*2,0) (A3) {
\begin{ytableau}
*(black!50) & *(black!50) \\
*(black!50) & *(black!50) \\
\end{ytableau}};
\draw[-,dotted,line width=\lw*0.5] (\hhhh*2-\hhh*0.5,-\vvv*2.5) -- (\hhhh*2-\hhh*0.5,\vvv*2.5); 
\node at (\hhhh*3,0) (A4) {
\begin{ytableau}
*(black!50)  \\
*(black!50)  & *(black!50) \\
*(black!50) 
\end{ytableau}};
\draw[-,dotted,line width=\lw*0.5] (\hhhh*3-\hhh*0.5,-\vvv*2) -- (\hhhh*3-\hhh*0.5,\vvv*2); 
\node at (\hhhh*4,0) (A5) {
\begin{ytableau}
*(black!50) & *(black!50) \\
*(black!50)  \\
*(black!50)  
\end{ytableau}};
\draw[-,dotted,line width=\lw*0.5] (\hhhh*4-\hhh*0.5,-\vvv*2) -- (\hhhh*4-\hhh*0.5,\vvv*2); 

\node at (\hhhh*-1,0) {$\DIF{\alpha}$-Case:};
\node at (\hhhh*-1,\vvv*-4) {{\rm ht}:};
\node at (\hhhh*0,\vvv*-4) {$0$};
\node at (\hhhh*1,\vvv*-4) {$3$};
\node at (\hhhh*2,\vvv*-4) {$1$};
\node at (\hhhh*3,\vvv*-4) {$2$};
\node at (\hhhh*4,\vvv*-4) {$2$};
\end{tikzpicture}
\]
\[
\begin{tikzpicture}
\def \hhhh{26mm}
\node at (-\hhhh*0,0) (A1) {\begin{ytableau}
*(black!50) & *(black!50) & *(black!50) & *(black!50) 
\end{ytableau}};    
\node at (\hhhh*1,0) (A2) {\begin{ytableau}
*(black!50) \\
*(black!50) \\
*(black!50) \\
*(black!50) 
\end{ytableau}};    
\node at (\hhhh*2,0) (A3) {
\begin{ytableau}
\empty & *(black!50) \\
*(black!50) & *(black!50) & *(black!50) 
\end{ytableau}};    
\node at (\hhhh*3,0) (A4) {
\begin{ytableau}
\empty & *(black!50) \\
*(black!50) \\
*(black!50) & *(black!50)
\end{ytableau}};
\node at (\hhhh*4,0) (A5) {
\begin{ytableau}
\empty & *(black!50)  \\
\empty \\
*(black!50) & *(black!50) \\
*(black!50) 
\end{ytableau}};
\node at (\hhhh*-1,0) {$\ESF{\alpha}$-Case:};
\node at (\hhhh*-1,\vvv*-3.5) {{\rm ht}:};
\node at (\hhhh*0,\vvv*-3.5) {$0$};
\node at (\hhhh*1,\vvv*-3.5) {$3$};
\node at (\hhhh*2,\vvv*-3.5) {$1$};
\node at (\hhhh*3,\vvv*-3.5) {$2$};
\node at (\hhhh*4,\vvv*-3.5) {$2$};
\end{tikzpicture}
\]
Here the dotted lines denote the first column.

\begin{definition}\label{def of border strip tableaux}
Let $\alpha,\beta$ be compositions with $|\alpha|=|\beta|$.

(a) In the case where $\balpha = \DIF{\alpha}$, 
a \emph{border-strip tableau of shape $\alpha$ and type $\beta$} is defined to be an assignment of positive integers to the boxes in $\tcd(\alpha)$ such that
\begin{itemize}
\item the entries in every row weakly increase from left to right,

\item 
the entries in the first column weakly increase from top to bottom,

\item the integer $i$ appears $\beta_i$ times, and

\item the set of boxes occupied by $i$ forms a border strip associated with $\DIF{\alpha}$.   
\end{itemize}

(b) In the case where $\balpha = \ESF{\alpha}$, a \emph{border strip tableau of shape $\alpha$ and type $\beta$} is defined to be 
an assignment of positive integers to the boxes in $\tcd(\alpha)$ such that
\begin{itemize}
\item the entries in every row weakly increase  from left to right,

\item the entries in every column  weakly increase from top to bottom,

\item the integer $i$ appears $\beta_i$ times, and

\item the set of boxes occupied by $i$ forms a border strip associated with $\ESF{\alpha}$.   
\end{itemize}
\end{definition}
Let $\BSTx{\balpha}{\beta}$ be the set of border strip tableaux of shape $\alpha$ and type $\beta$ and associated with $\balpha$.
For each $T \in \BSTx{\balpha}{\beta}$,
we define $\htt{\balpha}{T}$ to be the sum of the heights of the border strips (associated with $\balpha$) that make up $T$.
Then we can derive the following tableau description.

\begin{corollary}\label{tableau description}
With the notation in~\cref{Eq: Yalpha to quasipowersums II}, we have
\[
d_{\alpha\beta}  =  \sum_{T \in \BSTx{\balpha}{\beta}} (-1)^{\htt{\balpha}{T}}.
\]
\end{corollary}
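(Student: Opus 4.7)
The plan is to reduce the statement to the indecomposable case and then follow the strategy of Liu--Weselcouch outlined in the discussion preceding the corollary, adapted to the border strips associated with $Y_\alpha$. First I would observe that for $Y_\alpha \in \{\DIF{\alpha},\ESF{\alpha}\}$, the module $\mbalpha$ is itself indecomposable (this is already recorded in \cref{Table: our consideration}), so $\setIndSummandY = \{\mbalpha\}$, and \cref{Thm: Yalpha to quasipowersum} collapses to
\[
d_{\alpha\beta} = \sum_{f^* \in \mathsf{pt}_{P_{\mbalpha}}(\beta)} \sign(f^*),
\]
where the relevant poset $P_{\mbalpha}$ is given explicitly by \cref{Thm: posets for V and X}(b).

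Next I would construct a bijection $\Phi: \mathsf{pt}_{P_{\mbalpha}}(\beta) \to \BSTx{\balpha}{\beta}$ in the spirit of the map $f^*\mapsto T_{f^*}$ used for skew Schur functions. Concretely, fix the labelling of $\tcd(\alpha)$ by $[n]$ coming from the diagram $D_{\alpha^\rmc;\rmread(\cdot)}$ used in \cref{Thm: posets for V and X}(b), and define $\Phi(f^*)$ to be the filling of $\tcd(\alpha)$ whose value at the box labelled $i$ is $|f^*(i)|$. The condition $\mathrm{amb}(f^*) = (1^{\ell(\beta)})$ forces exactly one starred letter per value, while the covering relations of $P_{\mbalpha}$ (rows form chains in both cases; the first column forms a chain in the $\DIF{\alpha}$-case; every column forms a chain in the $\ESF{\alpha}$-case) translate the enriched/starred $P$-partition inequalities into the weak monotonicity along rows (and along the first column or all columns, respectively) demanded in \cref{def of border strip tableaux}. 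The key verification is that the connected components of equal entries in $\Phi(f^*)$ are precisely border strips in the sense appropriate to $Y_\alpha$: the block structure of the Hasse diagram of $P_{\mbalpha}$ given by \cref{Thm: posets for V and X}(b) shows that the only way two boxes with the same value $i$ can be incomparable in $P_{\mbalpha}$ is if they lie in the same row, or (in the $\ESF{\alpha}$-case) in the same column, which matches the connectivity conventions introduced before \cref{def of border strip tableaux}. Conversely, given $T \in \BSTx{\balpha}{\beta}$, I would reverse this procedure by deciding, within each border strip, which occurrences receive $+i$, $-i$, or $i^*$ according to the standard rules for enriched/starred $P$-partitions; the rigidity forced by the ``ambiguity $(1,1,\ldots,1)$'' condition singles out a unique starred partition.

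The sign equality $\sign(f^*) = (-1)^{\htt{\balpha}{\Phi(f^*)}}$ should then follow from a local count identical to the skew Schur case: within a single border strip $B$ with $r$ rows, exactly $r-1$ of its boxes must be assigned negative values (because along each vertical step one is forced to use $-i$ rather than $+i$ by the $P$-partition axioms, while the starred letter sits at the unique position where the choice is free). Summing over all border strips in $\Phi(f^*)$ gives $|\{x : -f^*(x) \in \N\}| \equiv \sum_B (r_B - 1) = \htt{\balpha}{\Phi(f^*)} \pmod 2$. Combining the bijection with this sign identity yields
\[
d_{\alpha\beta} = \sum_{f^* \in \mathsf{pt}_{P_{\mbalpha}}(\beta)} \sign(f^*) = \sum_{T \in \BSTx{\balpha}{\beta}} (-1)^{\htt{\balpha}{T}}.
\]

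The main obstacle, and the place where the two cases genuinely differ, will be verifying that the connectivity convention adopted in \cref{def of border strip tableaux}(b) for $\ESF{\alpha}$ (where two same-column boxes are considered connected when nothing in $\tcd(\alpha)\setminus B$ separates them) precisely matches the incomparability structure of $P_{X_\alpha}$ coming from the diagram $D_{\alpha^\rmc;\rmread(\sfT'_\alpha)} = \{(i,j) \mid (j,i)\in \tcd(\alpha)\}$. This forces a careful case check: I would need to show that if two boxes $(x,z),(y,z)$ in the same column of $\tcd(\alpha)$ become comparable in $P_{X_\alpha}$ via an intermediate labelled box then the prescription ``$(x,z-1)\notin B$'' in the border strip definition is automatically respected by any starred $P_{X_\alpha}$-partition, so that the bijection is well-defined in both directions. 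All other verifications (weak monotonicity, the ambiguity accounting, and the $r-1$ sign tally per strip) are formal transcriptions of Liu--Weselcouch's argument using the explicit posets from \cref{Thm: posets for V and X}(b).
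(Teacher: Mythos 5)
Your proposal is correct and follows essentially the same route as the paper's proof: reduce to the single indecomposable summand, transport starred $P_{\mbalpha}$-partitions to fillings of $\tcd(\alpha)$ via the box-labelling coming from $D_{\alpha^\rmc;\,\mathsf{read}(\cdot)}$ in Theorem~\ref{Thm: posets for V and X}(b), show this is a bijection onto $\BSTx{\balpha}{\beta}$, and match $\sign(f^*)$ with $(-1)^{\htt{\balpha}{T}}$ by counting forced negative labels along the vertical steps of each border strip. The only difference is one of emphasis: you explicitly flag the $\ESF{\alpha}$-connectivity check as the delicate point, whereas the paper simply asserts that the $\ESF{\alpha}$ case ``can be proven in the same way.''
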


\begin{proof}
For the sake of simplicity, we only deal with the case where $Y_\alpha = \DIF{\alpha}$. The other case, $Y_\alpha = \ESF{\alpha}$, can be proven in the same way.

For $f^* \in \mathsf{pt}_{P_{\mDIF{\alpha}}}(\beta)$ and $1 \le i \le \ell(\beta)$, let
\[
P^{f^*}_i := (f^*)^{-1} \left( \{x \in \bfP^* \mid |x| = i\} \right),
\]
where the definition of  $\mathsf{pt}_{P_{\mDIF{\alpha}}}(\beta)$ can be found in \cref{eq: sfpt_P}.
Recall the diagram $D_{\alpha^\rmc; \rmread(\sinkSIT{\alpha})}$ in \cref{Thm: posets for V and X}(b).
For $1 \leq i \leq n$, we denote by $(x_i,y_i)$ the $i$th element when enumerating the elements in this diagram along the rows from left to right starting with the uppermost row.
By definition, we know that $k \preceq_{P_{\mDIF{\alpha}}} l$ if and only if $x_k \le x_l$ and $y_k \le y_l$ for all $k, l \in [n]$.
For $j \in P^{f^*}_i$, let $B_j = (r_j, c_j) \in \tcd(\alpha)$, where
\[
r_j = \ell(\alpha) - y_j + 1 
\quad \text{and} \quad
c_j = |\{ l \in [n] \mid y_l = y_j \text{ and } x_l \le x_j \}|.
\]
Given $f^* \in \mathsf{pt}_{P_{\mDIF{\alpha}}}(\beta)$, let $T_{f^*}$ be the filling of $\tcd(\alpha)$ defined by
\[
T_{f^*} (B_j) = i \quad \text{for all $j \in P^{f^*}_i$ and $1 \le i \le \ell(\beta)$}.
\]
One sees that the condition $\mathrm{amb}(f^*) = (1^{\ell(\beta)})$ implies that $T_{f^*}$ is a border strip tableau.
And the condition $\wt(f^*) = \beta$ implies that $T_{f^*}$ is of type $\beta$.
Therefore, we have a well-defined map $\Phi: \mathsf{pt}_{P_{\mDIF{\alpha}}}(\beta) \ra \BSTx{\DIF{\alpha}}{\beta}, \ f^* \mapsto T_{f^*}$.
This map is bijective and $\sign{f^*} = (-1)^{\mathrm{ht}_{\DIF{\alpha}}(\Phi(f^*))}$.
Thus,
\[
\sum_{f^* \in \mathsf{pt}_{P_{\mDIF{\alpha}}}(\beta)}
\sign{f^*}
=  \sum_{T \in \BSTx{\DIF{\alpha}}{\beta}} (-1)^{\htt{\DIF{\alpha}}{T}}. \qedhere
\]   
\end{proof}

\begin{example}
If $\alpha = (2,1,2)$ and $\beta = (4,1)$, then
\[
\def \pp {0.6}
\def \ccc {3.6mm}
\mathsf{pt}_{P_{\calV_{\alpha}}}(\beta) = 
\left\{
f^*_1 = 
\begin{array}{l}
\begin{tikzpicture}[baseline=5mm]
\node[shape=circle,draw,minimum size=\ccc, inner sep=0pt] at (\pp*0,\pp*2) (D1) {\small $1$};
\node[shape=circle,draw,minimum size=\ccc, inner sep=0pt] at (\pp*1,\pp*3) (D2) {\small $2$};
\node[shape=circle,draw,minimum size=\ccc, inner sep=0pt] at (\pp*1,\pp*1) (D3) {\small $3$};
\node[shape=circle,draw,minimum size=\ccc, inner sep=0pt] at (\pp*2,\pp*0) (D4) {\small $4$};
\node[shape=circle,draw,minimum size=\ccc, inner sep=0pt] at (\pp*4,\pp*2) (D5) {\small $5$};

\node[left] at (0*\pp - 0.1*\pp, 2*\pp - 0.1*\pp) {\tiny $1^*$};
\node[left] at (1*\pp - 0.1*\pp, 3*\pp - 0.1*\pp) {\tiny $2^*$};
\node[left] at (1*\pp - 0.1*\pp, 1*\pp - 0.1*\pp) {\tiny $-1$};
\node[left] at (2*\pp - 0.1*\pp, 0*\pp - 0.1*\pp) {\tiny $-1$};
\node[left] at (4*\pp - 0.1*\pp, 2*\pp - 0.1*\pp) {\tiny $1$};

\draw (D1) -- (D2);
\draw (D3);
\draw (D4) -- (D5);
\draw[line width=\lw] (D1) -- (D3) -- (D4);
\end{tikzpicture}
\end{array},
\ \ 
f^*_2 = 
\begin{array}{l}
\begin{tikzpicture}[baseline=5mm]
\node[shape=circle,draw,minimum size=\ccc, inner sep=0pt] at (\pp*0,\pp*2) (D1) {\small $1$};
\node[shape=circle,draw,minimum size=\ccc, inner sep=0pt] at (\pp*1,\pp*3) (D2) {\small $2$};
\node[shape=circle,draw,minimum size=\ccc, inner sep=0pt] at (\pp*1,\pp*1) (D3) {\small $3$};
\node[shape=circle,draw,minimum size=\ccc, inner sep=0pt] at (\pp*2,\pp*0) (D4) {\small $4$};
\node[shape=circle,draw,minimum size=\ccc, inner sep=0pt] at (\pp*4,\pp*2) (D5) {\small $5$};

\node[left] at (0*\pp - 0.1*\pp, 2*\pp - 0.1*\pp) {\tiny $1^*$};
\node[left] at (1*\pp - 0.1*\pp, 3*\pp - 0.1*\pp) {\tiny $1$};
\node[left] at (1*\pp - 0.1*\pp, 1*\pp - 0.1*\pp) {\tiny $-1$};
\node[left] at (2*\pp - 0.1*\pp, 0*\pp - 0.1*\pp) {\tiny $-1$};
\node[left] at (4*\pp - 0.1*\pp, 2*\pp - 0.1*\pp) {\tiny $2^*$};

\draw (D1) -- (D2);
\draw (D3);
\draw (D4) -- (D5);
\draw[line width=\lw] (D1) -- (D3) -- (D4);
\end{tikzpicture}
\end{array}
\right\}
\]
and
\[
\def \hhhh {-20mm}
\BSTx{\DIF{\alpha}}{\beta} =
\left\{ 
T_{f^*_1} = 
\begin{array}{l}
\begin{tikzpicture}
\draw (0+\hhhh,0) -- (0+\hhhh,\vvv*1) --  (\hhh*1+\hhhh,\vvv*1) -- (\hhh*1+\hhhh,\vvv*-1) -- (\hhh*2+\hhhh,\vvv*-1) -- (\hhh*2+\hhhh,\vvv*-2) -- (0+\hhhh,\vvv*-2) -- (0+\hhhh,0);
\draw (\hhh*1+\hhhh,\vvv*0) rectangle (\hhh*2+\hhhh,\vvv*1);
\node at (\hhh*0.5+\hhhh,\vvv*0.5) {$1$};
\node at (\hhh*1.5+\hhhh,\vvv*0.5) {$2$};
\node at (\hhh*0.5+\hhhh,\vvv*-0.5) {$1$};
\node at (\hhh*0.5+\hhhh,\vvv*-1.5) {$1$};
\node at (\hhh*1.5+\hhhh,\vvv*-1.5) {$1$};
\end{tikzpicture}
\end{array},\ \ 
T_{f^*_2} = 
\begin{array}{l}
\begin{tikzpicture}
\draw (0,0) -- (0,\vvv*1) --  (\hhh*2,\vvv*1) -- (\hhh*2,0) -- (\hhh*1,0) -- (\hhh*1,\vvv*-2) -- (\hhh*0,\vvv*-2) -- (0,0);
\draw (\hhh*1,\vvv*-2) rectangle (\hhh*2,\vvv*-1);
\node at (\hhh*0.5,\vvv*0.5) {$1$};
\node at (\hhh*1.5,\vvv*0.5) {$1$};
\node at (\hhh*0.5,\vvv*-0.5) {$1$};
\node at (\hhh*0.5,\vvv*-1.5) {$1$};
\node at (\hhh*1.5,\vvv*-1.5) {$2$};
\end{tikzpicture}
\end{array}
\right\}.
\]
One can see that 
\[
\Phi(P_1) = T_{f^*_1} \quad \text{and} \quad \Phi(P_2) = T_{f^*_2}.
\]
Each of \cref{Thm: Yalpha to quasipowersum} and \cref{tableau description} implies that $d_{\alpha\beta} = 2$.
Therefore,
\[
\left[\frac{\Psi_{(4, 1)}}{z_{(4,1)}} \right]\DIF{(2,1,2)} = 2.
\]
Here $\left[\frac{\Psi_{(4, 1)}}{z_{(4,1)}} \right]\DIF{(2,1,2)}$ is the coefficient of $\frac{\Psi_{(4, 1)}}{z_{(4,1)}}$ in $\DIF{(2,1,2)}$.
Similarly, one can see that $\BSTx{\ESF{\alpha}}{\beta} = \{T_2\}$, and thus $\left[\frac{\Psi_{(4, 1)}}{z_{(4,1)}} \right]\ESF{(2,1,2)} = 1$.
\end{example}

In the special case where all parts of $\beta$ are the same, 
\cref{tableau description} is presented in a very simple form.
It can also be considered as a dual immaculate analogue of~\cite[Corollary 30]{22LO}.

\begin{proposition}
Let $s \in \N$.
If $\alpha \models  m s$, then we have 
\[
d_{\alpha (s^m)} = \epsilon \cdot |\BSTx{\DIF{\alpha}}{(s^m)}|,
\]
where $\epsilon = (-1)^{\htt{\DIF{\alpha}}{T}}$ for any $T \in \BSTx{\DIF{\alpha}}{(s^m)}$.
\end{proposition}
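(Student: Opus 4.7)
The plan is to invoke \cref{tableau description}, which gives
\[
d_{\alpha(s^m)} \;=\; \sum_{T \in \BSTx{\DIF{\alpha}}{(s^m)}} (-1)^{\htt{\DIF{\alpha}}{T}},
\]
and then to show that $\htt{\DIF{\alpha}}{T}$ takes the same value for every $T \in \BSTx{\DIF{\alpha}}{(s^m)}$. Once this invariance is established, all signs equal the common $\epsilon$ and the claimed identity is immediate.

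For each $T \in \BSTx{\DIF{\alpha}}{(s^m)}$ and each row index $j \in [\ell(\alpha)]$, I would let $S_j$ denote the border strip of $T$ containing the box $(j,1)$. By the DIF border-strip definition, $S_j$ meets row $j$ in a contiguous block of boxes starting at column $1$, say $(j,1),(j,2),\ldots,(j,n_j)$, and the remaining row-$j$ boxes $(j,n_j+1),\ldots,(j,\alpha_j)$ are then covered by single-row border strips disjoint from column $1$, each of size exactly $s$. Hence $\alpha_j - n_j$ is a non-negative multiple of $s$; combined with $1 \leq n_j \leq s$, this forces $n_j$ to be the unique integer in $[1,s]$ congruent to $\alpha_j$ modulo $s$. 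In particular $n_j$ is determined by $\alpha_j$ and $s$ alone, independent of $T$.

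I would then observe that every column-$1$-touching strip of $T$ occupies a consecutive range of rows, and its row-wise contributions $n_j$ along those rows sum to its total size $s$. Consequently the row-index set $[\ell(\alpha)]$ partitions into contiguous intervals $I_1,\ldots,I_k$, one per column-$1$-touching strip, each satisfying $\sum_{j\in I_i} n_j = s$. Since the $n_j$ are fixed, this decomposition is recovered greedily from the top and is therefore unique; in particular the number $k$ of column-$1$-touching strips depends only on $\alpha$ and $s$. (If no such decomposition exists, then $\BSTx{\DIF{\alpha}}{(s^m)}$ is empty and the assertion is vacuous.)

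Finally, a column-$1$-touching strip of $r$ rows has height $r-1$ while every single-row non-touching strip has height $0$. Since the column-$1$-touching strips together account for all $\ell(\alpha)$ boxes of column $1$, summing contributions gives
\[
\htt{\DIF{\alpha}}{T} \;=\; \ell(\alpha) - k,
\]
which depends only on $\alpha$ and $s$, completing the argument. The main point requiring care is justifying that the row-$j$ part of any column-$1$-touching DIF border strip is a contiguous block starting at column $1$; this is built into the definition, since a connected horizontal strip in row $j$ containing the box $(j,1)$ is necessarily a leftmost contiguous block of that row.
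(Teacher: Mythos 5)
Your proof is correct and takes essentially the same approach as the paper: both reduce to the invariance of $\htt{\DIF{\alpha}}{T}$ via \cref{tableau description}, both identify that the column-$1$-touching strips together cover the left-justified diagram $\tcd(\overline{\alpha})$ where $\overline{\alpha}_j$ (your $n_j$) is the representative of $\alpha_j$ in $[1,s]$ modulo $s$, and both observe that this diagram admits a unique decomposition into size-$s$ strips. Your greedy argument makes explicit the uniqueness the paper leaves as ``it is clear,'' and the formula $\htt{\DIF{\alpha}}{T} = \ell(\alpha) - k$ is a small bonus the paper does not record.
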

\begin{proof}
If $\BSTx{\DIF{\alpha}}{(s^m)} = \emptyset$, then there is nothing to prove. 
From now on we assume that $\BSTx{\DIF{\alpha}}{(s^m)} \neq \emptyset$. 
Let $T \in \BSTx{\DIF{\alpha}}{(s^m)}$ and $1 \leq i \leq m$.
Let $T^{-1}(i)$ be the border strip in $T$ occupied by the boxes containing $i$.
By the definition of border strip tableaux, if $T^{-1}(i)$ does not intersect the first column, then $T^{-1}(i)$ is a connected horizontal strip.
Thus $\mathrm{ht}(T^{-1}(i)) = 0$.

Let 
\[
I(T) := \{1 \leq i \leq m \mid \text{$T^{-1}(i)$ intersects the first column}\}.
\]
For any $T \in \BSTx{\DIF{\alpha}}{(s^m)}$, the diagram composed of the border strips $T^{-1}(i)$ $(i \in I(T))$ is $\tcd(\ova)$, where $\ova = (\ova_1,\ova_2,\ldots,\ova_{\ell(\alpha)})$ is the composition with 
\[
\ova_i = \alpha_i - s \cdot \max\{p \mid \alpha_i - s \cdot p > 0\}.
\]
It is clear that there is only one way to decompose $\tcd(\ova)$ into border strips of size $s$. 
Therefore, $\htt{\DIF{\alpha}}{T} = \htt{\DIF{\alpha}}{T'}$ for any $T, T' \in \BSTx{\DIF{\alpha}}{(s^m)}$, which completes the proof.
\end{proof}

\begin{example}
We illustrate in the figure the border strip tableaux in $\BSTx{\DIF{(5,2,1,8)}}{(4^4)}$.
\[
\begin{tikzpicture}
\def \hp {45mm}
\def \vp {22mm}
\node at (\hhh*0.1,\vvv*-3.1) (A1) {};
\foreach \c in {0,...,3}{
    \draw (\hhh*0,\vvv*0-\vvv*\c) rectangle (\hhh*1,\vvv*1-\vvv*\c);
}
\foreach \c in {1,...,4}{
    \node at (\hhh*0.5+\hhh*\c,\vvv*0.5) {\small $4$};
}
\draw (\hhh*1,\vvv*0) rectangle (\hhh*5,\vvv*1);
\draw (\hhh*1,\vvv*-1) rectangle (\hhh*2,\vvv*0);
\foreach \c in {0,...,6}{
    \draw (\hhh*1+\hhh*\c,\vvv*-3) rectangle (\hhh*2+\hhh*\c,\vvv*-2);
}
\node at (\hp+\hhh*-0.1,\vvv*-3.1) (A2) {};
\foreach \c in {0,...,3}{
    \draw (\hhh*0+\hp,\vvv*0-\vvv*\c) rectangle (\hhh*1+\hp,\vvv*1-\vvv*\c);
}
\foreach \c in {0,...,3}{
    \draw (\hhh*1+\hhh*\c+\hp,\vvv*0) rectangle (\hhh*2+\hhh*\c+\hp,\vvv*1);
}
\draw (\hhh*1+\hp,\vvv*-1) rectangle (\hhh*2+\hp,\vvv*0);
\foreach \c in {0,...,2}{
    \draw (\hhh*1+\hhh*\c+\hp,\vvv*-3) rectangle (\hhh*2+\hhh*\c+\hp,\vvv*-2);
}
\draw (\hhh*4+\hp,\vvv*-3) rectangle (\hhh*8+\hp,\vvv*-2);
\foreach \c in {1,...,4}{
    \node at (\hhh*3.5+\hhh*\c+\hp,\vvv*-2.5) {\small $4$};
}

\node at (\hhh*5.1+\hp*-0.5,-\vp+\vvv*1.1) (B1) {};
\foreach \c in {0,...,3}{
    \draw (\hhh*0+\hp*-0.5,\vvv*0-\vvv*\c-\vp) rectangle (\hhh*1+\hp*-0.5,\vvv*1-\vvv*\c-\vp);
}
\foreach \c in {1,...,4}{
    \node at (\hhh*0.5+\hhh*\c+\hp*-0.5,\vvv*0.5-\vp) {\small $4$};
}
\draw (\hhh*1+\hp*-0.5,\vvv*0-\vp) rectangle (\hhh*5+\hp*-0.5,\vvv*1-\vp);
\draw (\hhh*1+\hp*-0.5,\vvv*-1-\vp) rectangle (\hhh*2+\hp*-0.5,\vvv*0-\vp);
\foreach \c in {0,...,2}{
    \draw (\hhh*1+\hhh*\c+\hp*-0.5,\vvv*-3-\vp) rectangle (\hhh*2+\hhh*\c+\hp*-0.5,\vvv*-2-\vp);
}
\draw (\hhh*4+\hp*-0.5,\vvv*-3-\vp) rectangle (\hhh*8+\hp*-0.5,\vvv*-2-\vp);
\foreach \c in {1,...,4}{
    \node at (\hhh*3.5+\hhh*\c+\hp*-0.5,\vvv*-2.5-\vp) {\small $3$};
}
\node at (\hhh*5.1+\hp*0.5,-\vp+\vvv*1.1) (B2) {};
\foreach \c in {0,...,3}{
    \draw (\hhh*0+\hp*0.5,\vvv*0-\vvv*\c-\vp) rectangle (\hhh*1+\hp*0.5,\vvv*1-\vvv*\c-\vp);
}
\foreach \c in {1,...,4}{
    \node at (\hhh*0.5+\hhh*\c+\hp*0.5,\vvv*0.5-\vp) {\small $3$};
}
\draw (\hhh*1+\hp*0.5,\vvv*0-\vp) rectangle (\hhh*5+\hp*0.5,\vvv*1-\vp);
\draw (\hhh*1+\hp*0.5,\vvv*-1-\vp) rectangle (\hhh*2+\hp*0.5,\vvv*0-\vp);
\foreach \c in {0,...,2}{
    \draw (\hhh*1+\hhh*\c+\hp*0.5,\vvv*-3-\vp) rectangle (\hhh*2+\hhh*\c+\hp*0.5,\vvv*-2-\vp);
}
\draw (\hhh*4+\hp*0.5,\vvv*-3-\vp) rectangle (\hhh*8+\hp*0.5,\vvv*-2-\vp);
\foreach \c in {1,...,4}{
    \node at (\hhh*3.5+\hhh*\c+\hp*0.5,\vvv*-2.5-\vp) {\small $4$};
}
\node at (\hhh*-0.1+\hp*1.5,-\vp+\vvv*1.1) (B3) {};
\foreach \c in {0,...,5}{
    \draw (\hhh*\c+\hp*1.5,\vvv*0-\vp) rectangle (\hhh*\c+\hhh+\hp*1.5,\vvv*1-\vp);
}
\foreach \c in {0,1}{
    \draw (\hhh*\c+\hp*1.5,\vvv*-1-\vp) rectangle (\hhh*\c+\hhh+\hp*1.5,\vvv*0-\vp);
}
\draw (\hp*1.5,\vvv*-2-\vp) rectangle (\hhh*1+\hp*1.5,\vvv*-1-\vp);
\draw (\hhh*0+\hp*1.5,\vvv*-3-\vp) rectangle (\hhh*4+\hp*1.5,\vvv*-2-\vp);
\draw (\hhh*4+\hp*1.5,\vvv*-3-\vp) rectangle (\hhh*8+\hp*1.5,\vvv*-2-\vp);
\foreach \c in {1,...,4}{
    \node at (\hhh*3.5+\hhh*\c+\hp*1.5,\vvv*-2.5-\vp) {\small $4$};
}
\foreach \c in {1,...,4}{
    \node at (\hhh*-0.5+\hhh*\c+\hp*1.5,\vvv*-2.5-\vp) {\small $3$};
}

\node at (\hhh*2.5+\hp*-0.5,\vp*-2+\vvv*1.2) (C1) {};
\foreach \c in {0}{
    \draw (\hhh*\c+\hp*-0.5,\vvv*0+\vp*-2) rectangle (\hhh*\c+\hhh+\hp*-0.5,\vvv*1+\vp*-2);
}
\foreach \c in {0,1}{
    \draw (\hhh*\c+\hp*-0.5,\vvv*-1+\vp*-2) rectangle (\hhh*\c+\hhh+\hp*-0.5,\vvv*0+\vp*-2);
}
\draw (\hhh*1+\hp*-0.5,\vvv*0+\vp*-2) rectangle (\hhh*4+\hhh+\hp*-0.5,\vvv*1+\vp*-2);
\draw (\hp*-0.5,\vvv*-2+\vp*-2) rectangle (\hhh*1+\hp*-0.5,\vvv*-1+\vp*-2);
\draw (\hhh*0+\hp*-0.5,\vvv*-3+\vp*-2) rectangle (\hhh*4+\hp*-0.5,\vvv*-2+\vp*-2);
\draw (\hhh*4+\hp*-0.5,\vvv*-3+\vp*-2) rectangle (\hhh*8+\hp*-0.5,\vvv*-2+\vp*-2);
\foreach \c in {1,...,4}{
    \node at (\hhh*0.5+\hhh*\c+\hp*-0.5,\vvv*0.5+\vp*-2) {\small $4$};
}
\foreach \c in {1,...,4}{
    \node at (\hhh*3.5+\hhh*\c+\hp*-0.5,\vvv*-2.5+\vp*-2) {\small $3$};
}
\foreach \c in {1,...,4}{
    \node at (\hhh*-0.5+\hhh*\c+\hp*-0.5,\vvv*-2.5+\vp*-2) {\small $2$};
}
\node at (\hhh*2.5+\hp*0.5,\vp*-2+\vvv*1.2)  (C2) {};
\foreach \c in {0}{
    \draw (\hhh*\c+\hp*0.5,\vvv*0+\vp*-2) rectangle (\hhh*\c+\hhh+\hp*0.5,\vvv*1+\vp*-2);
}
\foreach \c in {0,1}{
    \draw (\hhh*\c+\hp*0.5,\vvv*-1+\vp*-2) rectangle (\hhh*\c+\hhh+\hp*0.5,\vvv*0+\vp*-2);
}
\draw (\hhh*1+\hp*0.5,\vvv*0+\vp*-2) rectangle (\hhh*4+\hhh+\hp*0.5,\vvv*1+\vp*-2);
\draw (\hp*0.5,\vvv*-2+\vp*-2) rectangle (\hhh*1+\hp*0.5,\vvv*-1+\vp*-2);
\draw (\hhh*0+\hp*0.5,\vvv*-3+\vp*-2) rectangle (\hhh*4+\hp*0.5,\vvv*-2+\vp*-2);
\draw (\hhh*4+\hp*0.5,\vvv*-3+\vp*-2) rectangle (\hhh*8+\hp*0.5,\vvv*-2+\vp*-2);
\foreach \c in {1,...,4}{
    \node at (\hhh*0.5+\hhh*\c+\hp*0.5,\vvv*0.5+\vp*-2) {\small $3$};
}
\foreach \c in {1,...,4}{
    \node at (\hhh*3.5+\hhh*\c+\hp*0.5,\vvv*-2.5+\vp*-2) {\small $4$};
}
\foreach \c in {1,...,4}{
    \node at (\hhh*-0.5+\hhh*\c+\hp*0.5,\vvv*-2.5+\vp*-2) {\small $2$};
}
\node at (\hhh*2.5+\hp*1.5,\vp*-2+\vvv*1.2)  (C3) {};
\foreach \c in {0}{
    \draw (\hhh*\c+\hp*1.5,\vvv*0+\vp*-2) rectangle (\hhh*\c+\hhh+\hp*1.5,\vvv*1+\vp*-2);
}
\foreach \c in {0,1}{
    \draw (\hhh*\c+\hp*1.5,\vvv*-1+\vp*-2) rectangle (\hhh*\c+\hhh+\hp*1.5,\vvv*0+\vp*-2);
}
\draw (\hhh*1+\hp*1.5,\vvv*0+\vp*-2) rectangle (\hhh*4+\hhh+\hp*1.5,\vvv*1+\vp*-2);
\draw (\hp*1.5,\vvv*-2+\vp*-2) rectangle (\hhh*1+\hp*1.5,\vvv*-1+\vp*-2);
\draw (\hhh*0+\hp*1.5,\vvv*-3+\vp*-2) rectangle (\hhh*4+\hp*1.5,\vvv*-2+\vp*-2);
\draw (\hhh*4+\hp*1.5,\vvv*-3+\vp*-2) rectangle (\hhh*8+\hp*1.5,\vvv*-2+\vp*-2);
\foreach \c in {1,...,4}{
    \node at (\hhh*0.5+\hhh*\c+\hp*1.5,\vvv*0.5+\vp*-2) {\small $2$};
}
\foreach \c in {1,...,4}{
    \node at (\hhh*3.5+\hhh*\c+\hp*1.5,\vvv*-2.5+\vp*-2) {\small $4$};
}
\foreach \c in {1,...,4}{
    \node at (\hhh*-0.5+\hhh*\c+\hp*1.5,\vvv*-2.5+\vp*-2) {\small $3$};
}
\node at (\hhh*2.5+\hp*-0.5,\vp*-3+\vvv*1.2) (D1) {};
\draw (\hhh*0+\hp*-0.5,\vvv*1+\vp*-3) -- (\hhh*1+\hp*-0.5,\vvv*1+\vp*-3) -- (\hhh*1+\hp*-0.5,\vvv*0+\vp*-3) -- (\hhh*2+\hp*-0.5,\vvv*0+\vp*-3) -- (\hhh*2+\hp*-0.5,\vvv*-1+\vp*-3) -- (\hhh*1+\hp*-0.5,\vvv*-1+\vp*-3) -- (\hhh*1+\hp*-0.5,\vvv*-2+\vp*-3) -- (\hhh*0+\hp*-0.5,\vvv*-2+\vp*-3) -- (\hhh*0+\hp*-0.5,\vvv*1+\vp*-3);
\draw (\hhh+\hp*-0.5,\vvv*0+\vp*-3) rectangle (\hhh*5+\hp*-0.5,\vvv*1+\vp*-3);
\draw (\hhh*0+\hp*-0.5,\vvv*-3+\vp*-3) rectangle (\hhh*4+\hp*-0.5,\vvv*-2+\vp*-3);
\draw (\hhh*4+\hp*-0.5,\vvv*-3+\vp*-3) rectangle (\hhh*8+\hp*-0.5,\vvv*-2+\vp*-3);
\node at (\hhh*0.5+\hp*-0.5,\vvv*0.5+\vp*-3) {\small $1$};
\node at (\hhh*0.5+\hp*-0.5,\vvv*-0.5+\vp*-3) {\small $1$};
\node at (\hhh*1.5+\hp*-0.5,\vvv*-0.5+\vp*-3) {\small $1$};
\node at (\hhh*0.5+\hp*-0.5,\vvv*-1.5+\vp*-3) {\small $1$};
\foreach \c in {1,...,4}{
    \node at (\hhh*0.5+\hhh*\c+\hp*-0.5,\vvv*0.5+\vp*-3) {\small $4$};
}
\foreach \c in {1,...,4}{
    \node at (\hhh*3.5+\hhh*\c+\hp*-0.5,\vvv*-2.5+\vp*-3) {\small $3$};
}
\foreach \c in {1,...,4}{
    \node at (\hhh*-0.5+\hhh*\c+\hp*-0.5,\vvv*-2.5+\vp*-3) {\small $2$};
}
\node at (\hhh*2.5+\hp*0.5,\vp*-3+\vvv*1.2) (D2) {};
\draw (\hhh*0+\hp*0.5,\vvv*1+\vp*-3) -- (\hhh*1+\hp*0.5,\vvv*1+\vp*-3) -- (\hhh*1+\hp*0.5,\vvv*0+\vp*-3) -- (\hhh*2+\hp*0.5,\vvv*0+\vp*-3) -- (\hhh*2+\hp*0.5,\vvv*-1+\vp*-3) -- (\hhh*1+\hp*0.5,\vvv*-1+\vp*-3) -- (\hhh*1+\hp*0.5,\vvv*-2+\vp*-3) -- (\hhh*0+\hp*0.5,\vvv*-2+\vp*-3) -- (\hhh*0+\hp*0.5,\vvv*1+\vp*-3);
\node at (\hhh*0.5+\hp*0.5,\vvv*0.5+\vp*-3) {\small $1$};
\node at (\hhh*0.5+\hp*0.5,\vvv*-0.5+\vp*-3) {\small $1$};
\node at (\hhh*1.5+\hp*0.5,\vvv*-0.5+\vp*-3) {\small $1$};
\node at (\hhh*0.5+\hp*0.5,\vvv*-1.5+\vp*-3) {\small $1$};
\draw (\hhh+\hp*0.5,\vvv*0+\vp*-3) rectangle (\hhh*5+\hp*0.5,\vvv*1+\vp*-3);
\draw (\hhh*0+\hp*0.5,\vvv*-3+\vp*-3) rectangle (\hhh*4+\hp*0.5,\vvv*-2+\vp*-3);
\draw (\hhh*4+\hp*0.5,\vvv*-3+\vp*-3) rectangle (\hhh*8+\hp*0.5,\vvv*-2+\vp*-3);
\foreach \c in {1,...,4}{
    \node at (\hhh*0.5+\hhh*\c+\hp*0.5,\vvv*0.5+\vp*-3) {\small $3$};
}
\foreach \c in {1,...,4}{
    \node at (\hhh*3.5+\hhh*\c+\hp*0.5,\vvv*-2.5+\vp*-3) {\small $4$};
}
\foreach \c in {1,...,4}{
    \node at (\hhh*-0.5+\hhh*\c+\hp*0.5,\vvv*-2.5+\vp*-3) {\small $2$};
}
\node at (\hhh*2.5+\hp*1.5,\vp*-3+\vvv*1.2) (D3) {};
\draw (\hhh*0+\hp*1.5,\vvv*1+\vp*-3) -- (\hhh*1+\hp*1.5,\vvv*1+\vp*-3) -- (\hhh*1+\hp*1.5,\vvv*0+\vp*-3) -- (\hhh*2+\hp*1.5,\vvv*0+\vp*-3) -- (\hhh*2+\hp*1.5,\vvv*-1+\vp*-3) -- (\hhh*1+\hp*1.5,\vvv*-1+\vp*-3) -- (\hhh*1+\hp*1.5,\vvv*-2+\vp*-3) -- (\hhh*0+\hp*1.5,\vvv*-2+\vp*-3) -- (\hhh*0+\hp*1.5,\vvv*1+\vp*-3);
\node at (\hhh*0.5+\hp*1.5,\vvv*0.5+\vp*-3) {\small $1$};
\node at (\hhh*0.5+\hp*1.5,\vvv*-0.5+\vp*-3) {\small $1$};
\node at (\hhh*1.5+\hp*1.5,\vvv*-0.5+\vp*-3) {\small $1$};
\node at (\hhh*0.5+\hp*1.5,\vvv*-1.5+\vp*-3) {\small $1$};
\draw (\hhh+\hp*1.5,\vvv*0+\vp*-3) rectangle (\hhh*5+\hp*1.5,\vvv*1+\vp*-3);
\draw (\hhh*0+\hp*1.5,\vvv*-3+\vp*-3) rectangle (\hhh*4+\hp*1.5,\vvv*-2+\vp*-3);
\draw (\hhh*4+\hp*1.5,\vvv*-3+\vp*-3) rectangle (\hhh*8+\hp*1.5,\vvv*-2+\vp*-3);
\foreach \c in {1,...,4}{
    \node at (\hhh*0.5+\hhh*\c+\hp*1.5,\vvv*0.5+\vp*-3) {\small $2$};
}
\foreach \c in {1,...,4}{
    \node at (\hhh*3.5+\hhh*\c+\hp*1.5,\vvv*-2.5+\vp*-3) {\small $4$};
}
\foreach \c in {1,...,4}{
    \node at (\hhh*-0.5+\hhh*\c+\hp*1.5,\vvv*-2.5+\vp*-3) {\small $3$};
}
\draw[->] (A1) -- (B1);
\draw[->] (A2) -- (B2);
\draw[->]  (\hhh*-1.1+\hp*1.5,\vvv*-3.6) -- (B3);
\draw[->] (\hhh*2.5+\hp*-0.5,\vp*-1-\vvv*3.4) -- (C1);
\draw[->] (\hhh*2.5+\hp*0.5,\vp*-1-\vvv*3.4) -- (C2);
\draw[->] (\hhh*2.5+\hp*1.5,\vp*-1-\vvv*3.4) -- (C3);
\draw[->] (\hhh*2.5+\hp*-0.5,\vp*-2-\vvv*3.4) -- (D1);
\draw[->] (\hhh*2.5+\hp*0.5,\vp*-2-\vvv*3.4) -- (D2);
\draw[->] (\hhh*2.5+\hp*1.5,\vp*-2-\vvv*3.4) -- (D3);
\end{tikzpicture}
\]
One sees that 
$\epsilon = 1$ for all $T \in \BSTx{\DIF{(5,2,1,8)}}{(4^4)}$.
\end{example}

\hspace*{5mm}

\noindent {\bf Acknowledgments.}
The authors are grateful to the anonymous referees for their careful readings of the manuscript and valuable advice.

\end{document}